\newtheorem{thm}{Theorem}
\newtheorem{lem}[thm]{Lemma}
\newtheorem{defn}[thm]{Definition}
\newtheorem{rem}[thm]{Remark}
\newtheorem{ex}[thm]{Example}
\begin{document}

\title[$tt^{*}$-geometry on the big phase space]{$tt^{*}$-geometry on
the big phase space}
\date{\today}

\author{Liana David and Ian A.B. Strachan}

\address{Institute of Mathematics \lq\lq Simion Stoilow\rq\rq\, of the
Romanian Academy\\ Calea Grivitei no. 21, Sector 1 \\Bucharest\\
Romania}

\email{liana.david@imar.ro}

\address{~~}

\address{School of Mathematics and Statistics\\ University of Glasgow\\Glasgow G12 8QQ\\ U.K.}
\email{ian.strachan@glasgow.ac.uk}

\keywords{Frobenius manifolds, Hermitian geometry, big phase space, $tt^{*}$-geometry, moduli space of curves} \subjclass{53D45, 53B35}

\maketitle

{\bf Abstract:} The big phase space, the geometric setting for the
study of quantum cohomology with gravitational descendents, is a
complex manifold and consists of an infinite number of copies of
the small phase space. The aim of this paper is to define a
Hermitian geometry on the big phase space.

Using the approach of Dijkgraaf and Witten \cite{DijkgraafWitten},
we lift various geometric structures of the small phase space to
the big phase space. The main results of our paper state that
various notions from $tt^{*}$-geometry are preserved under such
liftings.

\tableofcontents

\section{Introduction}  The big phase space $M^\infty$ - the geometric arena for the study of quantum cohomology and topological quantum field theories with
gravitational descendants - consists of an infinite number of copies of the small phase space $M\,.$ Typically $M$ is the cohomology ring of some smooth projective
variety, so
\[
M^\infty = \prod_{n\geq 0} H^{*}(V;\mathbb{C})\,.
\]
The Poincar\'e pairing on $H^{*}(V;\mathbb{C})$ does not lift canonically to $M^{\infty}$ and certain lifts of this pairing that can be defined on the big phase space are highly degenerate \cite{liu4}. Thus
from a differential geometric point of view the big phase space is hard to study. However, in a talk given at the 2006 ICM, Liu \cite{liu3} defined a non-degenerate metric on $M^\infty$
which is a natural lift of the Poincar\'e pairing, namely
\[
{\hat{\eta}} \left( T^n(\gamma_\alpha),T^m(\gamma_\beta) \right) =
\delta_{mn} \eta(\gamma_\alpha,\gamma_\beta) = \delta_{mn}
\eta_{\alpha\beta}\,.
\]
Note that this is defined in terms of the Poincar\'e pairing on
$M$ and a certain endomorphism $T$, which encapsulates the
properties of the topological recursion operator.

The aim of this paper is to study Hermitian structures on $M^\infty\,.$ This will be achieved by coupling the above idea of Liu with original ideas of
Witten and Dijkgraaf \cite{DijkgraafWitten}, which relate
two-point correlator functions on $M$ to two-point
correlator functions on $M^\infty\,.$ As it turns out, this procedure
can be used to
lift structures such as the $tt^{*}$-equations of Cecotti
and Vafa \cite{cecotti} from
$M$ to $M^\infty .$
At the centre of the theory developed in this paper lies the following result:

\begin{thm}
Suppose that the $tt^{*}$-equations for a
pseudo-Hermitian metric $h$ and Higgs
field $C$ are satisfied on $M\,,$ so
\[
\partial^D C = 0 \,, \qquad \qquad {}^D\! R + [ C, C^\dagger] = 0\,.
\]
Then there exists a natural lift  of the  pseudo-Hermitian
metric and Higgs field, so that the $tt^{*}$-equations are
satisfied on $M^\infty\,.$
\end{thm}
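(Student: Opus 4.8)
The plan is to carry the $tt^{*}$-package from $M$ to $M^\infty$ piece by piece: define natural lifts of $h$, $C$ and the connection, reduce the two equations on $M^\infty$ to the corresponding equations on $M$ plus correction terms involving the recursion endomorphism $T$, and then show those correction terms cancel.

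\emph{Setting up the lifts.} In the spirit of Liu's metric $\hat\eta$ and of the Dijkgraaf--Witten identities for two-point functions, I would define the lifted pseudo-Hermitian metric by
\[
\hat h\big(T^{n}(\gamma_\alpha),\,T^{m}(\gamma_\beta)\big)=\delta_{mn}\,h(\gamma_\alpha,\gamma_\beta),
\]
$h$ being understood sesquilinearly, the lifted connection $\hat D$ as the Chern connection of $\hat h$ (equivalently, the lift of $D$ along the Dijkgraaf--Witten embedding of the small phase space), and the lifted Higgs field $\hat C$ by transporting the multiplication underlying $C$ through $T$. The structural input needed at this point --- and presumably established in the preceding sections --- is that $T$ is parallel for the lifted flat connection $\hat\nabla$ and for $\hat D$, that $T$ is self-adjoint for both $\hat\eta$ and $\hat h$, and that the descendant directions of $M^\infty$ that are not pulled back from $M$ act on all lifted data only through $T$.

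\emph{Reduction to the small phase space.} Since $\hat h$, $\hat C$ and $\hat D$ are assembled level by level from $h$, $C$, $D$ and $T$, evaluating $\partial^{\hat D}\hat C$ and ${}^{\hat D}\!R+[\hat C,\hat C^{\dagger}]$ on lifted vector fields splits each quantity into a ``diagonal'' contribution --- at every descendant level a faithful copy of $\partial^{D}C$, respectively ${}^{D}\!R+[C,C^{\dagger}]$, which vanishes by hypothesis --- and a remainder in which a covariant derivative along a descendant direction has fallen on one of the lifted objects. So the whole theorem comes down to showing this remainder is zero.

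\emph{Killing the $T$-terms.} For this I would use $\hat\nabla T=0$ and $\hat D T=0$ (so $T$ never contributes to ${}^{\hat D}\!R$ and commutes past $\hat D$), together with the self-adjointness of $T$ under $\hat h$ (which controls $\hat C^{\dagger}$) and the fact that $T$ intertwines the lifted multiplication with itself; the leftover terms then telescope across consecutive descendant levels and cancel in pairs. I expect this final step to be the crux: computing $\hat C^{\dagger}$ with respect to $\hat h$ and verifying that the ``mixed'' components of ${}^{\hat D}\!R$ --- one leg in a small-phase-space direction and one in a descendant direction --- vanish is precisely where the non-product, $T$-twisted nature of the lift is felt, and where one must invoke the recursion relations encoded by $T$. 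A tidier alternative worth attempting first is to realise the $T$-twist as a gauge transformation conjugating the lifted data to the honest product $tt^{*}$-structure on $\prod_{n\ge 0}M$ (on which the equations hold trivially); whether this is available hinges on $T$ being invertible in the appropriate completion of vector fields on $M^\infty$, which I would check before committing to the direct computation.
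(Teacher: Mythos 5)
Your overall strategy --- lift $h$, $C$, $D$, reduce each term of the lifted $tt^{*}$-equations to its small-phase-space counterpart, and show the residual descendant contributions vanish --- matches the shape of the paper's argument, but the proposal is missing the one ingredient that makes the reduction work, and several of the structural facts you lean on are false. The missing ingredient is the Dijkgraaf--Witten change of variables $u^{\alpha}=\eta^{\alpha\beta}<<\tau_{0,1}\tau_{0,\beta}>>_{{}_0}$ and its Jacobian $M^{\sigma}_{\alpha}=\eta^{\sigma\nu}<<\tau_{0,1}\tau_{0,\nu}\tau_{0,\alpha}>>_{{}_0}$. Since $h$ is not constant in flat coordinates, $h(\gamma_\alpha,\gamma_\beta)$ must be lifted as a \emph{function} via this map, and a primary direction $\tau_{0,\alpha}$ on $M^{\infty}$ then differentiates lifted data through $M^{\sigma}_{\alpha}$, not as $\partial/\partial t^{\alpha}_{0}$ on $M$. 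Consequently the ``diagonal'' contribution is not a faithful copy of $\partial^{D}C$ or ${}^{D}\!R+[C,C^{\dagger}]$ at each level: it is the natural lift weighted by $M^{\sigma}_{\alpha}\overline{M^{\nu}_{\beta}}$, and the actual content of the proof is checking that ${}^{\hat D}\!R$, $\hat C$, and $\hat C^{\dagger}$ each pick up \emph{exactly the same} weights, so that the weighted sum is the lift of the vanishing small-phase-space expression. This forces the definition of $\hat C$: it is not obtained by ``transporting the multiplication through $T$'' but is set equal to $M^{\sigma}_{\alpha}T^{n}([C_{\tau_{0,\sigma}}(\cdot)]^{\wedge})$ on primary directions and to zero on $\mathrm{Im}(T)$ (making it a non-symmetric Higgs field), precisely so that it mirrors the computed form of the Chern connection $\hat D$.

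Three of your claimed inputs would fail if pursued. First, $T$ is not self-adjoint for $\hat\eta$ or $\hat h$: since $\hat\eta(T^{n}(\gamma_\alpha),T^{m}(\gamma_\beta))=\delta_{mn}\eta_{\alpha\beta}$, the operator $T$ acts as a level-raising shift, and $\hat\eta(T\mathcal{U},\mathcal{V})\neq\hat\eta(\mathcal{U},T\mathcal{V})$ in general. Second, $T$ is not invertible (the primary fields are not in its image), so the proposed gauge transformation onto the product structure $\prod_{n\ge 0}M$ is unavailable; the structure is genuinely twisted, as the non-coordinate commutators $[T^{n}(\gamma_\alpha),\gamma_\beta]=T^{n-1}(\gamma_\alpha\circ\gamma_\beta)$ show. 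Third, the residual terms do not telescope across descendant levels: they vanish \emph{individually}, because every function lifted from $M$ is annihilated by every vector field in $\mathrm{Im}(T)$ --- this is the key lemma, a direct consequence of the Topological Recursion Relation $T(\mathcal{W}_{1})\circ\mathcal{W}_{2}=0$ --- whence $\hat D_{T(\mathcal{W})}$ kills all lifted sections and the curvature and commutator terms with a leg in $\mathrm{Im}(T)$ are separately zero. The only genuinely mixed cancellation occurs in $(\partial^{\hat D}\hat C)_{\tau_{0,\alpha},T(\tau_{0,\beta})}$, where the derivative $T(\tau_{0,\beta})(M^{\sigma}_{\alpha})=(\tau_{0,\alpha}\circ\tau_{0,\beta})(\widehat{f^{\sigma}})$ cancels against the term coming from the commutator $[\tau_{0,\alpha},T(\tau_{0,\beta})]=-(\tau_{0,\alpha}\circ\tau_{0,\beta})$; this single-level identity, not a telescoping sum, is what closes the argument.
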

\noindent
We also show that the Saito structure on $T^{1,0}M$ lifts to a Saito
structure on $T^{1,0}M^{\infty}$ and that various other substructures on $M$,
compatible with the Saito structure or governed by the $tt^{*}$-equations
(such as real Saito structures,  harmonic real Saito structures, harmonic potential real Saito
structures, $DChk$-structures and CV-structures),
can be similarly lifted to $M^{\infty}.$

\subsection{Background}
The study of Gromov-Witten invariants and intersection theory on the moduli space of curves has provided the impetus for
many recent developments in mathematics and mathematical physics. By studying integrals of products of $\psi$-classes over moduli spaces
\[
<\tau_{a_1} \ldots \tau_{a_n}>_g:=\int_{ {\overline{M}}_{g,n}} \psi_1^{a_1} \ldots \psi_n^{a_n}
\]
Witten \cite{witten1} derived three basic equations: the string
equation, the dilaton equation and the topological recursion
relation, and these, used recursively, enabled the invariants to
be constructed. Such invariants may be combined into a generating
function and it was conjectured by Witten, and later proved by
Kontsevich \cite{kontsevich}, that this is a certain solution of
the KdV hierarchy. These basic relations may then be generalized
and raised to the status of axioms of a topological quantum field
theory (TQFT).

Consider a smooth projective variety $V$ with $H^{\rm
odd}(V;\mathbb{C})=0$, $\{\gamma_1\,,\ldots\,,\gamma_N\}$ a basis
for the cohomology ring $M:=H^{*}(V;\mathbb{C})$ and let
$$
\eta_{\alpha\beta}  =  \eta(\gamma_\alpha,\gamma_\beta) =  \int_V
\gamma_\alpha \cup \gamma_\beta
$$
be the Poincar\'e pairing which defines a non-degenerate metric
which may be used to raise and lower indices. Following the
conventions of Liu and Tian \cite{liu1,liu2}, a flat coordinate
system $\{t^\alpha_0\,, \alpha=1\,,\ldots\,,N\}$ may be found on
$M$ so $\gamma_\alpha=\frac{\partial~}{\partial t^\alpha_0}$, and in
which the components of $\eta$ are constant.

The big phase space consists of an infinite number of copies of
the $M\,,$ the small phase space, so
\[
M^\infty = \prod_{n\geq 0} H^{*}(V;\mathbb{C})\,.
\]
The coordinate system $\{ t_{0}^{\alpha}\}$ induces, in a canonical way,
a coordinate system $\{t^\alpha_n\,,
n \in \mathbb{Z}_{\geq 0}\,,\alpha=1\,,\ldots\,,N\}$ on $M^{\infty}.$
We denote by $\tau_n(\gamma_\alpha) = \frac{\partial~}{\partial
t^\alpha_n}$ (also abbreviated to $\tau_{n,\alpha}\,)$
the associated fundamental vector fields, which
represent various tautological line bundles over the
moduli space of curves. A vector field
$\mathcal{W}=\sum_{m,\alpha} f_{m,\alpha} \tau_m(\gamma)$ is
called a primary field if $f_{m,\alpha}=0$ for $m>0$ and a
descendent field if $f_{0,\alpha}=0$, for any $\alpha .$

The descendent Gromov-Witten invariants
\[
< \tau_{n_1}(\gamma_{a_1}) \ldots \tau_{n_k}(\gamma_{a_k})>_g
\]
may be combined into generating functions, called prepotentials, labeled by the genus $g\,,$
\[
{\mathcal{F}}_g=\sum_{k\geq 0} \frac{1}{k!} \sum_{n_1 ,\alpha_1\ldots
n_k,\alpha_k} t^{\alpha_1}_{n_1} \ldots t^{\alpha_k}_{n_k} <
\tau_{n_1}(\gamma_{\alpha_1}) \ldots
\tau_{n_k}(\gamma_{\alpha_k})>_g\,,
\]
and these in turn may be used to define $k$-tensor fields on the big phase space, via the formula
\begin{equation}\label{k-point}
<< {\mathcal W}_{1}\cdots {\mathcal W}_{k}>>_{g} = \sum_{m_{1},\alpha_{1},\cdots , m_{k},\alpha_{k}} f^{1}_{m_{1},\alpha_{1}}
\cdots f^{k}_{m_{k}\alpha_{k}} \frac{\partial^{k} {\mathcal{F}_{g}}}{\partial t^{\alpha_{1}}_{m_{1}}\cdots \partial t^{\alpha_{k}}_{m_{k}} },
\end{equation}
for any vector fields ${\mathcal W}_{i} = \sum_{m,\alpha} f^{i}_{m,\alpha} \frac{\partial}{\partial t^{\alpha}_{m}}$. The tensor field (\ref{k-point}) has a physical interpretation
as the $k$-point correlation function of the TQFT.

The basic relationships between these correlators may then be encapsulated in the following:

\begin{defn}
Let $\tilde{t}^\alpha_n=t^\alpha_n - \delta_{n,1} \delta_{\alpha,1}$ and let
\begin{eqnarray*}
\mathcal{S} & = & -\sum_{n,\alpha} \tilde{t}^\alpha_n \tau_{n-1}(\gamma_{\alpha} )\,,\\
\mathcal{D} & = & -\sum_{n,\alpha} \tilde{t}^\alpha_n
\tau_{n}(\gamma_{\alpha})
\end{eqnarray*}
be the string and dilaton vector fields respectively. Then the prepotentials ${\mathcal{F}}_g$ satisfy the following relations:

\medskip
\noindent{\underline{String Equation:}}
\[
<< \mathcal{S} >>_g = \frac{1}{2} \delta_{g,0} \sum_{\alpha,\beta} \eta_{\alpha\beta} t^\alpha_0 t^\beta_0\,;
\]
\medskip
\noindent{\underline{Dilaton Equation:}}
\[
<< \mathcal{D} >>_g = (2g-2) {\mathcal{F}}_g - \frac{1}{24} \chi(V) \delta_{g,1}\,;
\]
\medskip
\noindent{\underline{Genus-zero Topological Recursion Relation:}}
\[
<<\tau_{m+1}(\gamma_\alpha) \tau_n(\gamma_\beta) \tau_k(\gamma_\sigma)  >>_{{}_0} = << \tau_{m}(\gamma_\alpha)\gamma_\mu >>_{{}_0}  <<\gamma^\mu \tau_n(\gamma_\beta) \tau_k(\gamma_\sigma) >>_{{}_0}\,.
\]
\end{defn}
The Topological Recursion Relation in turn leads to the generalized WDVV equation:
\[
\sum_{\mu,\nu} \frac{\partial^3 {\mathcal{F}}_0}{\partial
t^\alpha_m \partial t^\beta_n \partial t^\mu_0} \eta^{\mu\nu}
\frac{\partial^3 {\mathcal{F}}_0}{\partial t^\nu_0 \partial
t^\gamma_k \partial t^\delta_l} = \sum_{\mu,\nu} \frac{\partial^3
{\mathcal{F}}_0}{\partial t^\alpha_m \partial t^\gamma_k \partial
t^\mu_0} \eta^{\mu\nu} \frac{\partial^3 {\mathcal{F}}_0}{\partial
t^\nu_0 \partial t^\beta_n \partial t^\delta_l}.
\]
This may be written more succinctly by introducing the so-called quantum product between vector
fields on the big phase space:
\[
\mathcal{W}_1 \circ \mathcal{W}_2 = <<\mathcal{W}_1 \mathcal{W}_2
\gamma^\sigma >>_{{}_0}\, \gamma_\sigma\,
\]
where $\gamma^{\sigma}= \eta^{\sigma\beta}\gamma_{\beta}$ and
$(\eta^{\alpha\beta})$ is the inverse of $(\eta_{\alpha\beta}).$
With this the generalized WDVV equation just becomes the
associativity condition
$$
(\mathcal{W}_1\circ\mathcal{W}_2)\circ\mathcal{W}_3=\mathcal{W}_1\circ(\mathcal{W}_2\circ\mathcal{W}_3)\,.
$$

\medskip

By restricting such theories to primary vector fields with coefficients in the small phase space one recovers a Frobenius manifold structure \cite{dubrovin1,dubrovin2} on the small phase space,
with
\[
F_0(t_0^1\,,\ldots\,,t_0^N) = \left.\mathcal{F}_0( {\bf t})
\right|_{t^\alpha_n=0\,,\,n>0}
\]
becoming the prepotential for the Frobenius manifold
and multiplication given by
$$
\tau_{0,\alpha}\bullet\tau_{0,\beta} = <<\tau_{0,\alpha}\tau_{0,\beta}
\gamma^{\sigma}>>_{{}_0}\vert_{M} \gamma_{\sigma}.
$$
Frobenius
manifolds have turned out to be extremely ubiquitous structure
appearing, for example, via the work of K. Saito in singularity theory
\cite{saito} and in the theory of integrable systems, as well
as in quantum cohomology and mirror symmetry.

\medskip

The underlying manifolds, when studying Frobenius manifolds, are
actually complex manifolds, and the metric $\eta$ is an holomorphic (non-degenerate)  metric,  rather than a
real-valued metric \cite{dubrovin1}. To define real objects one
requires an anti-holomorphic involution which may then in turn be
used to define Hermitian objects. This direction of research was
started by Cecotti and Vafa \cite{cecotti} in their study of
$tt^{*}$-geometry (topological-anti-topological fusion). The idea
has since been developed by Dubrovin \cite{dubrovin3} (who studied
the integrability of $tt^{*}$-equations and developed the
connection with pluriharmonic maps), Hertling \cite{hert1} (who
connected $tt^{*}$-geometry with the work of Simpson on Higgs
bundles and generalizations of variations of Hodge structures) and
by Sabbah \cite{sabbah-art} (this stressing the actual
construction of these objects). For a collection of articles on
these subject, see \cite{wentland}.

\medskip

The historical development outlined above may be summarized in the following diagram:

\[
\begin{array}{ccc}
\left\{ \begin{array}{c} {\rm TQFT}\\ {\rm big~phase~space} \\ {\rm [Witten,~Dijkgraaf]} \end{array}\right\} & & \\
&&\\
\updownarrow
&&\\
&&\\
\left\{ \begin{array}{c} {\rm Frobenius~manifold}\\ {\rm small~phase~space} \\ {\rm [Dubrovin]} \end{array}\right\} & \longleftrightarrow &
\left\{ \begin{array}{c} tt^{*}-{\rm geometry, TERP-structures}\\ {\rm Hermitian-Higgs~bundles} \\ {\rm [Cecotti-Vafa,~Dubrovin,} \\{\rm Hertling,~Sabbah]} \end{array}\right\}
\\
\end{array}
\]
\medskip

\noindent As the title indicates, the purpose of this paper is to
introduce a Hermitian structure on the big phase space and to
study the properties of such a structure, in particular its
relationship with the standard, holomorphic, structures. It turns
out that one may define a full, infinite dimensional,
$tt^{*}$-geometry on the big phase space.

\medskip

The key object in our treatment is a certain endomorphism of
$T^{1,0}M^\infty$ introduced and studied in \cite{liu1,liu2},

\begin{equation}\label{endom-T}
T({\mathcal W}) := \tau_{+}({\mathcal W}) - {\mathcal S}\circ
\tau_{+}({\mathcal W}) ,\quad {\mathcal W}\in T^{1,0}M^{\infty},
\end{equation}
where
$$
\tau_{\pm} \left( \sum_{n,\alpha} f_{n,\alpha} \tau_{n,\alpha
}\right) := \sum_{n,\alpha} f_{n,\alpha} \tau_{n\pm 1,\alpha }\,.
$$
With this the Topological Recursion Relation takes the compact form
\begin{equation}\label{top-rec-rel}
T(\mathcal{W}_1) \circ \mathcal{W}_2 =0 \quad {\mathcal
W}_1\,,{\mathcal W}_2 \in T^{1,0}M^{\infty}\,.
\end{equation}
The Poincar\'e pairing is an holomorphic metric on the small phase
space, not the large phase space. An extension of this metric to
the big phase space was given in \cite{liu4}, namely
$<\mathcal{U},\mathcal{V}>=<<\mathcal{S}\mathcal{U}\mathcal{V}>>_{{}_0}\,,$
but this metric is degenerate, a fact that follows easily from the
use of the Topological Recursion Relation. However, in \cite{liu3}
a non-degenerate holomorphic metric on the big phase space was
defined, namely
\begin{equation}
{\hat{\eta}}(\mathcal{W},\mathcal{V}) = \sum_{k=0}^\infty << \mathcal{S} \tau^k_{-}(\mathcal{W}) \tau_{-}^k(\mathcal{V}) >>_{{}_0}\,.
\end{equation}
On using properties of the endomorphism $T$ it is easy to show that
\begin{equation}\label{hat-begin}
{\hat{\eta}} \left( T^n(\gamma_\alpha),T^m(\gamma_\beta) \right) =
\delta_{mn} \eta(\gamma_\alpha,\gamma_\beta) = \delta_{mn}
\eta_{\alpha\beta}\,.
\end{equation}
This last formula may be seen as a lift, using the endomorphism
$T$,  of $\eta$, defined on the small phase space, to the big
phase space. Combined with a basic result of Witten and Dijkgraaf
\cite{DijkgraafWitten} on the use of constitutive relations, this
gives a way to define Hermitian structures on $M^\infty$ starting
with finite dimensional structures on $M\,.$

\medskip

\subsection{Structure of Paper}

The rest of the paper is laid out as follows. Section
\ref{preliminary} is intended to fix notation. Here we briefly
recall the basic facts we need about the big phase space,
$tt^{*}$-geometry and the relations between $tt^{*}$-geometry and
Frobenius manifolds (see Definitions \ref{saito} and
\ref{diverse}).

 In Section \ref{lift-section} we develop our main tool from this
paper. We define the natural lift of functions and vector
fields from the small phase space $M$ to the big phase space
$M^{\infty}$ (see Definition \ref{def-ext}) and we study their basic
properties. The natural lift of a vector
field from $M$ to $M^{\infty}$ is a primary vector field, whose
coefficients in canonical flat coordinates are natural lifts
of the coordinates of the initial vector field. We study how
natural lifts of functions behave under derivations
and in particular we show that
any function on $M^{\infty}$, which is a natural lift of a
function on $M$, is annihilated by the vector fields from the
image of the operator $T$, defined by (\ref{endom-T}) (see Lemma
\ref{ian}). Lemma \ref{ian} is a main tool for
our computations from the next sections. Finally, using the same
ideas, we remark that any
tensor field $\mathcal F$ on $M$ may be lifted to a
tensor field
$\hat{\mathcal F}$ on $M^{\infty}$, referred as the natural
lift of $\mathcal F$ (see Section \ref{general}). In the
following sections we consider natural lifts of specific
tensor fields on $M$, and show they are part of
$tt^{*}$-structures on $M^{\infty}.$

In Section \ref{hermitian-str} we assume that the small phase
space comes with a real structure $k$, compatible with $\eta$
(i.e. $h:= \eta (\cdot , k \cdot )$ is a pseudo-Hermitian metric).
The natural lift $\hat{k}$ of $k$ is compatible with the
natural lift $\hat{\eta}$ of $\eta$ and we compute the Chern
connection of the associated pseudo-Hermitian metric
$\hat{h}=\hat{\eta}(\cdot , \hat{k}\cdot )$ on $T^{1,0}M^{\infty}$
and its curvature (see Lemmas \ref{chern} and \ref{curvature}) .
These computations will be used later on, in our study of the
$tt^{*}$-equations for the extended structures.

In Section \ref{higgs-str} we lift the Higgs field $C$ on $M$ to
a (non-commutative) Higgs field $\hat{C}$ on $M^{\infty}$ (see
Definition \ref{def-hat-c}). The field $\hat{C}$ is not the
natural lift of $C$ (in the sense of Section \ref{general}),
but it reflects that $2$-point functions lift trivially to the big
phase space (see Remark \ref{ian-notes}). The definition of
$\hat{C}$ also mirrors the properties of the Chern connection of
$\hat{h}$ and turns out to be very suitable for the
$tt^{*}$-geometry and the theory of Saito bundles on $M^{\infty}$.
Other lifts of the multiplication from the small phase space
to the big phase space (e.g. the quantum multiplication mentioned
above or the Liu's multiplication $\diamond$, see Remark
\ref{liu-multi}), appear in the literature, but they are not so
well suited for the $tt^{*}$-geometry on the big phase space.

In Section \ref{extended-str} we gather all lifted structures
defined in the previous sections and we prove that the basic
notions from the theory of Frobenius manifolds and
$tt^{*}$-geometry are preserved under these lifts. More
precisely, let $(T^{1,0}M, \nabla , \eta , C, R_{0}, R_{\infty})$
be the Saito bundle associated to the Frobenius manifold
$(M,\bullet , \eta , E)$, where $\nabla$ is the Levi-Civita
connection of $\eta$, $C_{X}(Y)=X\bullet Y$ is the Higgs field,
$R_{0}= C_{E}$ and $R_{\infty} =\nabla E$, where $E$ is the Euler
field. Assume that $k$ is a real structure on $M$, compatible with
$\eta$, and let $h:= \eta (\cdot , k\cdot )$ be the associated
pseudo-Hermitian metric. On $T^{1,0}M^{\infty}$ we consider
$\hat{\nabla}, \hat{\eta}, \hat{C}, \hat{R}_{0},
\widehat{R}_{\infty}, \hat{k}, \hat{h}$, where $\hat{\nabla}$ is a
flat connection on $T^{1,0}M^{\infty}$, defined by the condition
that all vector fields $T^{n}(\tau_{0,\alpha})$ ($n\geq 0$, $1\leq
\alpha\leq N$) are $\hat{\nabla}$-parallel, $\hat{\eta}$,
$\hat{R}_{0}$, $\widehat{R}_{\infty}$, $\hat{k}$ and $\hat{h}$ are
natural lifts of $\eta$, $R_{0}$, $R_{\infty}$, $k$, $h$, and
$\hat{C}$ is the lifted (non-commutative) Higgs field, as
defined in Section \ref{higgs-str} (see Definition
\ref{def-hat-c}). In a first stage we show that $(T^{1,0}M,
\hat{\nabla}, \hat{\eta}, \hat{C}, \hat{R}_{0}, \hat{R}_{\infty})$
is a Saito bundle. Then
we prove our main results from this paper: namely, we assume that
various compatibility conditions between the real structure $k$
and the Saito structure $(\nabla , \eta , C, R_{0}, R_{\infty})$
hold (giving rise to a harmonic Higgs bundle,  a real Saito bundle,
a harmonic potential real Saito bundle, a $DChk$-bundle or a CV-bundle), and we
prove that all such conditions are inherited by the lifted
structures on the big phase space (see Sections
\ref{harmonic-sect}, \ref{extended-sect},
\ref{extended-real-sect}).\\

{\bf Acknowledgements.} Part of this work was carried while L.D.
was a visitor, during April-June 2012, at Institut des Hautes
Etudes Scientifiques (France). Hospitality, excellent working
conditions and financial support are acknowledged. This work is
also partially supported by a grant of the Romanian National
Authority for Scientific Research, CNCS-UEFISCDI, project no.
PN-II-ID-PCE-2011-3-0362.

\section{Preliminary material}\label{preliminary}

This section is intended to recall basic facts we need from the
geometry of the big phase space, theory of Frobenius manifolds and
$tt^{*}$-geometry. The manifolds we consider are complex and the
vector bundles holomorphic. For a complex manifold $M$, we denote
by $T^{1,0}M$ the holomorphic tangent bundle of $M$, by ${\mathcal
T}_{M}$, ${\mathcal T}^{1,0}_{M}$, $C^{\infty}(M)$, $C^{\infty}(M,
\mathbb{R})$ the sheaf of holomorphic vector fields, vector fields
of type $(1,0)$ and complex, respectively real valued smooth
functions on $M$. Vector fields on the small phase space will be
denoted by $X$, $Y$, $Z$, etc, while vector fields on the big
phase space will be usually denoted by ${\mathcal U}, {\mathcal
V}$, ${\mathcal W}$. (Unless otherwise specified,  vector fields
are of type $(1,0)$, not necessarily holomorphic, and functions
are smooth and complex valued). For an holomorphic bundle
$V\rightarrow M$, $\Omega^{1}(M, V)$, $\Omega^{1,0}(M, V)$ and
$\Omega^{0,1}(M, V)$ will denote, respectively,
the sheaves of holomorphic $1$-forms,
forms of type $(1,0)$ and forms of type $(0,1)$ on $M$, with values in $V$.

\subsection{The geometry of the big phase space}\label{prel-big}

The material in this section is taken directly from
\cite{liu1,liu2,liu3} and no proofs will be given. The first lemma
shows that the string vector field behaves like a unit for the
quantum product restricted to primary fields, and the second lemma
derives properties of a naturally defined covariant derivative on
the big phase space.

\begin{lem}
For all primary vector fields $\mathcal{W}$ and for all vectors
fields $\mathcal{U}\,,\mathcal{V}$ on $M^{\infty}$,
\begin{eqnarray*}
\mathcal{S} \circ \mathcal{W} & = & \mathcal{W} \,,\\
\mathcal{S} \circ \mathcal{U} \circ \mathcal{V} & = & \mathcal{U} \circ \mathcal{V}\,.
\end{eqnarray*}
\end{lem}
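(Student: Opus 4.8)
The plan is to prove the two identities $\mathcal{S}\circ\mathcal{W}=\mathcal{W}$ (for $\mathcal{W}$ primary) and $\mathcal{S}\circ\mathcal{U}\circ\mathcal{V}=\mathcal{U}\circ\mathcal{V}$ directly from the definition of the quantum product and the string equation, reducing everything to a computation of three-point correlators $<<\mathcal{S}\,\mathcal{U}\,\mathcal{V}>>_0$. First I would recall that, by definition, $\mathcal{S}\circ\mathcal{W}=<<\mathcal{S}\,\mathcal{W}\,\gamma^\sigma>>_0\,\gamma_\sigma$, so it suffices to show that $<<\mathcal{S}\,\mathcal{W}\,\gamma^\sigma>>_0=<<\mathcal{W}\,\gamma^\sigma>>_0$ for every $\sigma$, i.e. that inserting the string vector field into a two-point function with one primary entry acts as the identity on the primary slot. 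The natural tool here is the genus-zero string equation applied in its differentiated form: differentiating the relation $<<\mathcal{S}>>_0=\tfrac12\sum\eta_{\alpha\beta}t^\alpha_0 t^\beta_0$ twice with respect to the coordinates corresponding to $\mathcal{W}$ and $\gamma^\sigma$, and keeping careful track of the fact that $\mathcal{S}$ itself depends on the coordinates (so that $\partial_{n,\alpha}\mathcal{S}=-\tau_{n-1,\alpha}$ contributes extra boundary terms), yields precisely an identity of the form $<<\mathcal{S}\,\tau_{n,\alpha}\,\gamma^\sigma>>_0 + <<\tau_{n-1,\alpha}\,\gamma^\sigma>>_0(\text{type terms})=\delta_{n,0}\eta_{\alpha}{}^{\sigma}$ plus lower-descendent corrections. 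Restricting to $n=0$ (the primary case) collapses the descendent-shift terms and leaves $<<\mathcal{S}\,\tau_{0,\alpha}\,\gamma^\sigma>>_0=<<\tau_{0,\alpha}\,\gamma^\sigma>>_0=\delta_\alpha{}^\sigma$ after using that the two-point function of two primaries in genus zero reduces to $\eta$; this gives the first identity.

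For the second identity, I would again expand by definition: $\mathcal{S}\circ\mathcal{U}\circ\mathcal{V}=<<\mathcal{S}\,(\mathcal{U}\circ\mathcal{V})\,\gamma^\sigma>>_0\,\gamma_\sigma$, and $\mathcal{U}\circ\mathcal{V}=<<\mathcal{U}\,\mathcal{V}\,\gamma^\mu>>_0\,\gamma_\mu$ is itself a primary vector field (its coefficients vanish for descendent indices). Therefore $\mathcal{U}\circ\mathcal{V}$ is exactly the kind of vector field to which the first identity applies, so $\mathcal{S}\circ(\mathcal{U}\circ\mathcal{V})=\mathcal{U}\circ\mathcal{V}$ follows immediately, and by associativity of $\circ$ (the generalized WDVV equation quoted in the text) this equals $\mathcal{S}\circ\mathcal{U}\circ\mathcal{V}$. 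So the second statement is a formal consequence of the first together with associativity and the observation that the quantum product of any two vector fields lands among the primary fields.

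The main obstacle I anticipate is the bookkeeping in the differentiated string equation: the string vector field $\mathcal{S}=-\sum_{n,\alpha}\tilde t^\alpha_n\tau_{n-1,\alpha}$ has coordinate-dependent coefficients, so when one differentiates $<<\mathcal{S}>>_0$ one must use the chain rule both on the coefficients $\tilde t^\alpha_n$ and "through" the correlator onto $\mathcal{F}_0$, and one must correctly handle the shift $\tau_{n}\mapsto\tau_{n-1}$ and the convention $\tau_{-1}=0$. Getting the combinatorial factors and the index contraction with $\eta$ exactly right — and verifying that the unwanted descendent terms really do vanish once $\mathcal{W}$ is primary — is the delicate part; everything else is formal. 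Since the excerpt states this lemma is taken directly from Liu--Tian, I would in practice cite \cite{liu1,liu2} for the detailed verification and present only the reduction above.
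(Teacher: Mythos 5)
Your proposal is correct, but note that the paper itself offers no proof of this lemma: Section \ref{prel-big} states explicitly that the material is taken from \cite{liu1,liu2,liu3} ``and no proofs will be given,'' so there is nothing internal to compare against beyond the citation. Your reduction is the standard one and it works. For the first identity, writing $\mathcal{S}\circ\mathcal{W}=<<\mathcal{S}\,\mathcal{W}\,\gamma^{\sigma}>>_{{}_0}\gamma_{\sigma}$ and differentiating the genus-zero string equation twice in primary directions via (\ref{deriv-tensor}) gives $<<\mathcal{S}\,\gamma_{\alpha}\,\gamma_{\beta}>>_{{}_0}=\eta_{\alpha\beta}$; the bookkeeping you worry about is in fact even tamer than you anticipate, because when both derivative directions are primary the only correction term coming from the coordinate dependence of $\mathcal{S}$ is $\nabla_{\tau_{0,\beta}}\mathcal{S}=-\tau_{-1,\beta}=0$, so no descendent-shift terms ever appear and function-linearity of the correlators extends the identity from $\gamma_{\alpha}$ to arbitrary primary $\mathcal{W}$. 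Your treatment of the second identity is the right observation: $\mathcal{U}\circ\mathcal{V}=<<\mathcal{U}\,\mathcal{V}\,\gamma^{\mu}>>_{{}_0}\gamma_{\mu}$ is by construction a combination of the $\tau_{0,\mu}$ and hence primary, so the first identity applies to it directly; the appeal to WDVV associativity is only needed to identify $\mathcal{S}\circ(\mathcal{U}\circ\mathcal{V})$ with the unparenthesised expression and is harmless. What your self-contained derivation buys over the paper's bare citation is a verification that the lemma really follows from nothing more than the string equation and the definition of $\circ$, which is worth having; citing \cite{liu1,liu2} for the details, as you propose, matches what the authors themselves do.
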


\begin{rem}\label{liu-multi}{\rm Besides the quantum product, there are also other interesting multiplications on the big phase space, see e.g. \cite{liu3}.
The multiplication
$$
T^{n}({\mathcal U}) \diamond  T^{m}({\mathcal V}) = \delta_{mn}
T^{n}({\mathcal U}\circ {\mathcal V}),
$$
where ${\mathcal U}$ and ${\mathcal V}$ are primary vector fields,
is commutative, associative, with unit field
$$
\hat{\mathcal S}=\sum_{k\geq 0}T^{k} ({\mathcal S}\circ {\mathcal S}).
$$
One may also show that the metric $\hat{\eta}$ and the multiplication $\diamond$ are compatible in the sense that
\[
{\hat{\eta}} (\mathcal{W}_1 \diamond \mathcal{W}_2,\mathcal{W}_3) = {\hat{\eta}} (\mathcal{W}_1,\mathcal{W}_2 \diamond \mathcal{W}_3)\,.
\]
However one may check that $(M, \diamond , \hat{\mathcal S})$ is not an $F$-manifold \cite{hert0}.}
\end{rem}

\begin{lem}\label{cr}
Let $\nabla$ be the covariant derivative defined by
\[
\nabla_{\mathcal{V}} \mathcal{W} = \sum_{m,\alpha} \mathcal{V} \left( f_{m,\alpha}\right) \tau_m(\gamma_\alpha)
\]
where $\mathcal{W} = \sum_{m,\alpha} f_{m,\alpha}
\tau_m(\gamma_\alpha)$ and $\mathcal{V}$ is an arbitrary vector
field on $M^{\infty}$. Then
\begin{align}
\nonumber {\mathcal W} << {\mathcal W}_{1}\cdots {\mathcal W}_{k}
>>_{{}_0} & =  \sum_{i=1}^{k} << {\mathcal W}_{1}\cdots
{\mathcal W}_{i-1} (\nabla_{\mathcal W}{\mathcal W}_{i}){\mathcal
W}_{i+1}\cdots {\mathcal W}_{k} >>_{{}_0}\\
\label{deriv-tensor} & + << {\mathcal W}{\mathcal W}_{1}\cdots
{\mathcal W}_{k}>>_{{}_0},
\end{align}
for any vector fields ${\mathcal W}, {\mathcal
W}_{i}$ on $M^{\infty}$,  and
\begin{eqnarray*}
\nabla_\mathcal{V}\left( T^k(\mathcal{W})\right) &
= & T^k\left( \nabla_{\mathcal{V}} \mathcal{W} \right) - T^{k-1} \left( \mathcal{V}\circ\mathcal{W} \right)\,,\\
\left[ T^n(\gamma_\alpha),T^m(\gamma_\beta)\right] & = & 0 \,, \qquad n,m \geq 1\,,\\
\left[ T^n(\gamma_\alpha),\gamma_\beta\right] & = & T^{n-1} \left( \gamma_\alpha \circ \gamma_\beta \right) \,, \qquad n\geq 1\,.
\end{eqnarray*}
\end{lem}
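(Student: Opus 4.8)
The plan is to verify the three displayed identities of Lemma \ref{cr} directly, working in the canonical flat coordinates $\{t^\alpha_n\}$ and using only the definition of $\nabla$ together with the structural facts already recorded: the formula (\ref{k-point}) for correlator tensors, the definition (\ref{endom-T}) of $T$, and the topological recursion relation in the form (\ref{top-rec-rel}).

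First I would prove the derivation formula (\ref{deriv-tensor}). Since $\nabla_{\mathcal W}$ applied to a coefficient function $f$ is just $\mathcal W(f)$, the identity amounts to the Leibniz rule for the differential operator $\mathcal W = \sum f_{m,\alpha}\,\partial/\partial t^\alpha_m$ acting on the product structure implicit in (\ref{k-point}). Write out $\mathcal W << {\mathcal W}_1\cdots{\mathcal W}_k>>_0$ using (\ref{k-point}): one gets a sum of terms where $\mathcal W$ hits one of the coefficient functions $f^i_{m_i,\alpha_i}$ — these assemble into the $\nabla_{\mathcal W}{\mathcal W}_i$ terms — plus one term where $\mathcal W$ hits the partial derivatives of ${\mathcal F}_0$, producing one extra derivative and hence the $<<\mathcal W {\mathcal W}_1\cdots{\mathcal W}_k>>_0$ term. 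This is a routine bookkeeping computation with the chain rule, and I expect no obstacle here.

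Next I would establish the commutation relations. For $[T^n(\gamma_\alpha), T^m(\gamma_\beta)] = 0$ with $n, m \geq 1$ and $[T^n(\gamma_\alpha), \gamma_\beta] = T^{n-1}(\gamma_\alpha\circ\gamma_\beta)$ with $n \geq 1$, the natural route is induction on $n$ using the first displayed identity $\nabla_{\mathcal V}(T^k(\mathcal W)) = T^k(\nabla_{\mathcal V}\mathcal W) - T^{k-1}(\mathcal V\circ\mathcal W)$, combined with the general formula $[\mathcal U, \mathcal V] = \nabla_{\mathcal U}\mathcal V - \nabla_{\mathcal V}\mathcal U$ valid because $\nabla$, being coordinate differentiation, is torsion-free. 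The base case $[\gamma_\alpha, \gamma_\beta] = 0$ holds since these are coordinate vector fields. For the inductive step one writes $[T^n(\gamma_\alpha), \mathcal W] = \nabla_{T^n(\gamma_\alpha)}\mathcal W - \nabla_{\mathcal W} T^n(\gamma_\alpha)$, expands the second term via the first identity to get $T^n(\nabla_{\mathcal W}\gamma_\alpha) - T^{n-1}(\mathcal W\circ\gamma_\alpha)$, and then handles the remaining pieces using the inductive hypothesis and the fact (from the first lemma of this subsection and the topological recursion relation (\ref{top-rec-rel})) that $T(\mathcal W_1)\circ\mathcal W_2 = 0$, which kills the extra terms that would otherwise appear. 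One must also check that $T(\gamma_\beta) = \tau_+(\gamma_\beta) - \mathcal S\circ\tau_+(\gamma_\beta) = \tau_1(\gamma_\beta) - \tau_1(\gamma_\beta) = \tau_1(\gamma_\beta)$ vanishes — wait, rather one uses that $\tau_+$ and $\nabla$ almost commute up to a correction involving the quantum product, which is exactly what the first identity encodes; so really the whole argument reduces to that first identity plus torsion-freeness plus (\ref{top-rec-rel}).

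The one genuinely substantive step, and the main obstacle, is proving the first identity $\nabla_{\mathcal V}(T^k(\mathcal W)) = T^k(\nabla_{\mathcal V}\mathcal W) - T^{k-1}(\mathcal V\circ\mathcal W)$. By induction on $k$ it suffices to treat $k=1$, i.e. $\nabla_{\mathcal V}(T(\mathcal W)) = T(\nabla_{\mathcal V}\mathcal W) - \mathcal V\circ\mathcal W$. Unwinding $T(\mathcal W) = \tau_+(\mathcal W) - \mathcal S\circ\tau_+(\mathcal W)$ and applying $\nabla_{\mathcal V}$, the term $\nabla_{\mathcal V}(\tau_+(\mathcal W))$ equals $\tau_+(\nabla_{\mathcal V}\mathcal W)$ since $\tau_+$ merely reindexes coefficient functions and $\nabla$ differentiates them. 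The delicate term is $\nabla_{\mathcal V}(\mathcal S\circ\tau_+(\mathcal W))$: here one must use the derivation formula (\ref{deriv-tensor}) on the quantum product (which is a $<<\,\cdot\,>>_0$ tensor contracted with $\gamma_\sigma$), producing $(\nabla_{\mathcal V}\mathcal S)\circ\tau_+(\mathcal W) + \mathcal S\circ\nabla_{\mathcal V}(\tau_+(\mathcal W)) + (\mathcal V\circ\mathcal S)\circ\tau_+(\mathcal W)$-type contributions; one then computes $\nabla_{\mathcal V}\mathcal S$ from the explicit formula for $\mathcal S$ (noting $\nabla_{\mathcal V}\mathcal S = -\sum \mathcal V(\tilde t^\alpha_n)\tau_{n-1}(\gamma_\alpha) = -\tau_-(\mathcal V)$ since $\mathcal V(\tilde t^\alpha_n)$ is the $(n,\alpha)$-component of $\mathcal V$), invokes $\mathcal S\circ(\tau_+(\mathcal W)) = \mathcal W$ when $\mathcal W$ is primary (the first lemma) — or more carefully tracks the general case — and uses (\ref{top-rec-rel}) to discard terms of the form $T(\cdot)\circ(\cdot)$. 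Carefully assembling these pieces, the corrections collapse to $-\mathcal V\circ\mathcal W$, giving the claim. The bookkeeping around the quantum product and the string field is where all the real work lies; everything else is formal consequence.
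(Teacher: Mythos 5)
First, a point of reference: the paper itself gives no proof of Lemma \ref{cr} --- Section \ref{prel-big} states that the material is taken directly from the work of Liu and Tian and that no proofs will be given --- so there is no in-paper argument to compare your route against; your proposal has to stand on its own. Your overall architecture is sound: the derivation formula (\ref{deriv-tensor}) is indeed just the Leibniz rule applied to (\ref{k-point}); the two bracket identities do follow from the first displayed identity together with torsion-freeness of $\nabla$ and the Topological Recursion Relation $T({\mathcal W}_1)\circ {\mathcal W}_2=0$ (in fact no induction is needed there --- a single application of $\nabla_{\mathcal V}(T^k({\mathcal W}))=T^k(\nabla_{\mathcal V}{\mathcal W})-T^{k-1}({\mathcal V}\circ{\mathcal W})$ to each slot of the bracket suffices); and the reduction of the first identity to the case $k=1$ on coordinate fields is legitimate since both sides obey the same Leibniz rule in ${\mathcal W}$.

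The genuine gap is in the crux step, the computation of $\nabla_{\mathcal V}\bigl({\mathcal S}\circ\tau_{+}({\mathcal W})\bigr)$. Applying (\ref{deriv-tensor}) to the coefficient functions $<<{\mathcal S}\,\tau_{+}({\mathcal W})\,\gamma^{\sigma}>>_{{}_0}$ produces, besides $(\nabla_{\mathcal V}{\mathcal S})\circ\tau_{+}({\mathcal W})=-\tau_{-}({\mathcal V})\circ\tau_{+}({\mathcal W})$ and ${\mathcal S}\circ\nabla_{\mathcal V}(\tau_{+}({\mathcal W}))$, the genuine four-point correlator $<<{\mathcal V}\,{\mathcal S}\,\tau_{+}({\mathcal W})\,\gamma^{\sigma}>>_{{}_0}\,\gamma_{\sigma}$ --- not a ``$({\mathcal V}\circ{\mathcal S})\circ\tau_{+}({\mathcal W})$-type'' term. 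None of the tools you list (the Topological Recursion Relation, the unit property of ${\mathcal S}$ on primary fields, $\nabla_{\mathcal V}{\mathcal S}=-\tau_{-}({\mathcal V})$) can evaluate this four-point function. What is needed is the twice-differentiated String Equation, which for coordinate fields gives
\begin{equation*}
<<{\mathcal S}\,{\mathcal U}_{1}{\mathcal U}_{2}{\mathcal U}_{3}>>_{{}_0}=\sum_{i}<<{\mathcal U}_{1}\cdots\tau_{-}({\mathcal U}_{i})\cdots{\mathcal U}_{3}>>_{{}_0};
\end{equation*}
applied to $<<{\mathcal V}\,{\mathcal S}\,\tau_{+}({\mathcal W})\,\gamma^{\sigma}>>_{{}_0}$ this yields $<<\tau_{-}({\mathcal V})\,\tau_{+}({\mathcal W})\,\gamma^{\sigma}>>_{{}_0}+<<{\mathcal V}\,{\mathcal W}\,\gamma^{\sigma}>>_{{}_0}$, the first summand cancels against the $\nabla_{\mathcal V}{\mathcal S}$ contribution, and the second is exactly ${\mathcal V}\circ{\mathcal W}$. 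Without invoking the String Equation the claimed collapse to $-{\mathcal V}\circ{\mathcal W}$ does not happen. Relatedly, your parenthetical ``${\mathcal S}\circ\tau_{+}({\mathcal W})={\mathcal W}$ when ${\mathcal W}$ is primary'' is false: $\tau_{+}({\mathcal W})$ is a descendent field, so the unit lemma does not apply to it; the correct statement, again a consequence of the String Equation, is ${\mathcal S}\circ\tau_{+}({\mathcal W})=<<{\mathcal W}\,\gamma^{\sigma}>>_{{}_0}\gamma_{\sigma}$, equivalently $T({\mathcal W})=\tau_{+}({\mathcal W})-<<{\mathcal W}\,\gamma^{\sigma}>>_{{}_0}\gamma_{\sigma}$, from which the $k=1$ identity follows in one line. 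So the missing ingredient is identifiable and the proof is repairable, but as written the central cancellation is asserted rather than established.
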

\noindent  Thus the vector fields $\{ T^n(\gamma_\alpha) \,,
n\in\mathbb{Z}_{\geq 0}\,, \alpha=1\,,\ldots\,,N\}$ form a frame
for $T^{1,0}M^\infty\,,$ but not a coordinate frame, due to the
last of the above equations.

\begin{rem}{\rm The connection $\nabla$ as defined above
induces an holomorphic connection on $T^{1,0}M$ in the obvious way
(if $X$ and $Y$ are vector fields on $T^{1,0}M$ then $\nabla_{X}Y$
is also a vector field on $T^{1,0}M$),  for which $\{ \tau_{0,\alpha}\}$ are
flat. In particular, the induced connection
coincides with the Levi-Civita connection of $\eta$, and will
be also denoted by $\nabla .$ It will be clear from the context when
$\nabla$ acts on $T^{1,0}M$ or $T^{1,0}M^{\infty}.$}
\end{rem}

\subsection{Frobenius manifolds and $tt^{*}$-geometry}\label{frob}

In this section we recall the basic definitions from Frobenius
manifolds and $tt^{*}$-geometry, see e.g. \cite{hert1, sabbah-art}. We
begin with the definition of Saito bundles and we explain their
relation with Frobenius manifolds. Then we add a real structure on
a Saito bundle and we define various compatibility conditions
between the Saito structure and the real structure. We work in the
holomorphic category: all manifolds, bundles, tensor fields,
connections etc from this section are holomorphic, unless
otherwise stated.

\begin{defn}\label{saito} A \underline{Saito bundle} (of weight $w\in \mathbb{C}$) is a
vector bundle
$$
\left(\pi :V\rightarrow M,\nabla , g, C , R_{0}, R_{\infty}\right)
$$
endowed with a connection $\nabla$, a metric $g$, a
vector valued $1$-form $C \in \Omega^{1}(M, \mathrm{End}V)$ and
two endomorphisms $R_{0}$ and $R_{\infty}$, such that the
following conditions are satisfied:
\begin{equation}\label{saito0}
R^{\nabla}=0,\quad d^{\nabla}C =0,\quad \nabla g=0,\quad C\wedge C
=0,\quad C^{*}=C ,
\end{equation}
and
\begin{align*}
&\nabla R_{0}+ C = [C , R_{\infty}], \quad [R_{0},
C ]=0,\quad R_{0}^{*}= R_{0};\\
&\nabla R_{\infty}=0,\quad R_{\infty}^{*} + R_{\infty} =
-w\mathrm{Id}_{V}.
\end{align*}
Above $d^{\nabla}C$ and $C\wedge C$ are $\mathrm{End}(V)$-valued
$2$-forms, defined by
\begin{align*}
(d^{\nabla}C)_{X, Y}&:= \nabla_{X}\left( C_{Y}\right) -
\nabla_{Y}\left( C_{X}\right) - C_{[X, Y]},\\
(C\wedge C )_{X,Y} &:= C_{X}C_{Y}-C_{Y}C_{X}
\end{align*}
for any $ X, Y\in {\mathcal T}_{M}$, and the superscript ``$*$''
denotes the $g$-adjoint (in particular,
$C^{*}_{X}\in\mathrm{End}(V)$  is the $g$-adjoint of
$C_{X}\in\mathrm{End}(V)$). Moreover, $[R_{0}, C]$ is an
$\mathrm{End}(V)$-valued $1$-form, which, on $X\in T^{1,0}M$, is equal
to $[R_{0}, C_{X}].$
\end{defn}

A Frobenius manifold $(M, \bullet , e, \eta , E)$ defines a Saito
structure $(\nabla , \eta , C ,R_{0}, R_{\infty})$ on $T^{1,0}M$,
where $\nabla$ is the Levi-Civita connection of $\eta$, $C_{X}Y:=
X\bullet Y$ is the Higgs field, $R_{0}:= C_{E}$ is the
multiplication by the Euler field and $R_{\infty}:= \nabla E$. The
weight of this Saito structure is $d$, where $L_{E}(g)=-dg$.
Conversely, any Saito bundle whose rank is equal to the dimension
of the base, together with a suitably chosen parallel section
(usually called primitive homogeneous), gives rise to a Frobenius
structure on the base of the bundle \cite{saito} (see also
\cite{sabbah}).

We now add a real structure to a Saito bundle and define various
compatibility conditions, which give rise to the notions of real
Saito bundles, harmonic real Saito bundles and harmonic potential
real Saito bundles.

\begin{defn}\label{diverse} 1)
A \underline{real Saito bundle} is a Saito bundle
$$
\left(\pi :V\rightarrow M,\nabla , g,C,  R_{0}, R_{\infty}\right)
$$
together with a real structure $k:V\rightarrow V$ (i.e. $k$ is a
fiber-preserving smooth anti-linear involution) such that $g$, $k$
are compatible (i.e. $h:= g(\cdot , k\cdot )$ is a
pseudo-Hermitian metric) and $g$, $h$ are also compatible
(i.e. $D(g)=0$, where
$D$ is the Chern connection of  $h$).\\

2) A \underline{harmonic real Saito bundle} is a real Saito bundle
$$
\left(\pi :V\rightarrow M,\nabla ,  g, C, R_{0}, R_{\infty},
k\right)
$$
such that $(V, C,h = g(\cdot , k\cdot ))$ is a harmonic Higgs
bundle, i.e. the $tt^{*}$-equations
$$
(\partial^{D}C )_{Z_{1}, Z_{2}}=0,\quad R^{D}_{Z_{1}, \bar{Z}_{2}}
+ [C_{Z_{1}}, C^{\dagger}_{\bar{Z}_{2}}]=0, \quad Z_{1},Z_{2}\in
T^{1,0}M
$$
hold, where
$$
(\partial^{D}C )_{Z_{1},Z_{2}} := D_{Z_{1}}(C_{Z_{2}}) -
D_{Z_{2}}(C_{Z_{1}}) - C_{[Z_{1},Z_{2}]},
$$
$C^{\dagger}\in \Omega^{0,1}(M, \mathrm{End}V)$ denotes the
$h$-adjoint of $C\in\Omega^{1,0}(M, \mathrm{End}V)$ and $D$ is the Chern connection of $h$.\\

3) A \underline{harmonic potential real Saito bundle} is a harmonic real
Saito bundle
$$
\left(\pi : V\rightarrow M,\nabla ,g, C, R_{0}, R_{\infty},
k\right)
$$
together with a smooth $g$-self adjoint endomorphism $A$
of $V$ (called the potential) such
that the following conditions hold:\\

3a) $D^{(1,0)}A = C$, where $D$ is the Chern connection of $h:= g(\cdot , k\cdot )$;\\

3b) $D^{(1,0)}= \nabla - [A^{\dagger},C ]$, where
$A^{\dagger}$ is the $h$-adjoint of $A$;\\

3c) the endomorphism $R_{\infty} + [A^{\dagger}, R_{0}]$ is
$h$-self adjoint.

\end{defn}

A Frobenius manifold with a suitably chosen real structure and a (not necessarily holomorphic)
endomorphism of the tangent bundle gives
rise to the notion of harmonic Frobenius manifold, defined as
follows.

\begin{defn}
A \underline{harmonic Frobenius manifold} is a complex Frobenius
manifold $(M,\circ , e,g, E)$ such that $L_{E}(g) = dg$, with
$d\in\mathbb{R}$, together with a real structure $k$ on $T^{1,0}M$
and a smooth endomorphism $A$ of $T^{1,0}M$ (called
the potential of the Frobenius manifold), such that the associated
Saito bundle $(T^{1,0}M, \nabla, g,  C , R_{0} = C_{E}, R_{\infty}=\nabla
E )$ is a harmonic potential real Saito bundle, with real
structure $k$ and potential $A$.
\end{defn}

One may show that any harmonic potential real Saito bundle whose
rank is equal to the dimension of the base, together with a
parallel primitive real homogeneous section, gives rise to a
harmonic Frobenius structure on the base of the bundle. For our
purposes, we do not need this construction. For a precise
statement and proof, see Corollary 1.31 of \cite{sabbah-art}.

\begin{rem}\label{CV}{\rm
As will be proved in Section \ref{subsect-saito},
the tangent bundle of the big phase space comes
naturally equipped with a Saito structure, obtained by lifting  the Saito
structure of the small phase space. Thus  it is natural  to
look for $tt^{*}$-structures on the big phase space,  compatible
with this lifted Saito structure.   For this reason, in Definition
\ref{diverse} above only notions from $tt^{*}$-geometry, which admit an
underlying Saito structure, were recalled. It is worth to remark however that other important
notions exist in $tt^{*}$-geometry, which are build as an enrichment of the notion of
harmonic Higgs bundle, rather than Saito bundle, and do not necessarily admit an underlying
Saito structure.  For example, in the language of \cite{hert1}, one may consider
the notion of $DChk$-bundle, which is
a harmonic Higgs bundle $(V, C, h)$, together
with a real structure $k$, such that $g:= h(\cdot , k\cdot )$ is a symmetric (holomorphic)
metric, compatible with $h$. A richer notion is  the notion of   CV-bundle,
which is, by definition,  a $DChk$-bundle $(V, C, h, k)$, together with two endomorphisms
$\mathcal U$ and $\mathcal Q$ (the latter not necessarily holomorphic),
satisfying the following conditions:\\

i) for any $X\in T^{1,0}M$, $[C_{X}, {\mathcal U}]=0$;\\

ii) for any $X\in T^{1,0}M$,
$$
D_{X}{\mathcal U} + [C_{X}, Q] - C_{X}=0
$$
and
$$
D_{X}{\mathcal Q}  - [C_{X}, k{\mathcal U}k]=0
$$
(as usual, $D$ denotes the Chern connection of $h$).\\

iii) $\mathcal Q$ is $h$-self adjoint and $g$-skew adjoint.\\

Such structures arise naturally in singularity theory \cite{hert1}. While our primary interest
is in the structures presented in Definition
\ref{diverse}, it turns out that the notions of $DChk$ and $CV$-structures are preserved under our liftings to the big phase space. These facts will be referred to throughout the course of this paper, but no proofs will be given.  }
\end{rem}

\section{Natural lifts from $M$ to $M^{\infty}$}\label{lift-section}

In this section we describe a canonical way to lift tensor
fields from the small phase space $M$ to the big phase space
$M^{\infty}.$ This will be our main tool in the construction of a
Hermitian geometry on the big phase space.
The idea is an adoption
of the use of constitutive equations and was originally introduced
by Dijkgraaf and Witten \cite{DijkgraafWitten}. We preserve the
notation from the Introduction and Section \ref{prel-big}.

\subsection{Natural lifts of functions and vector fields}

\begin{defn}\label{def-ext} i) Let $f$ be a function on $M$. The function
$\hat{f}$ on $M^{\infty}$, defined by
\begin{equation}\label{ext}
\hat{f}(t^{\alpha}_{n}):= f\left( \eta^{\alpha\beta}<<\tau_{0,1}\tau_{0,\beta}>>_{{}_0}\right) ,
\end{equation}
is called the natural lift of $f$.\\

ii) Let $X\in {\mathcal T}^{1,0}_{M}$ a vector field on $M$, given in flat coordinates
$\{ t^{\alpha}_{0}\}$ by
$$
X= \sum_{\alpha =1}^{N}f^{\alpha}\tau_{0,\alpha} .
$$
The primary field
$$
\hat{X}:= \sum_{\alpha =1}^{N}\widehat{f^{\alpha}}\tau_{0,\alpha}
$$
is called the natural lift of $X$ to $M^{\infty}.$
\end{defn}

We now develop some simple properties of natural lifts, which
will be useful in the next sections.

\begin{lem}\label{ian}
For any  ${\mathcal W}\in {\mathcal T}^{1,0}_{M^{\infty}}$
and $f\in C^{\infty}(M)$,
\begin{equation}\label{x-f}
{\mathcal W}(\hat{f}) = \left[ \frac{\partial f}{\partial
t_{0}^{\beta}}\right]^{\wedge} \eta^{\beta\sigma} <<
\tau_{0,1}\tau_{0,\sigma}{\mathcal W}>>_{0}.
\end{equation}
In particular,
\begin{equation}\label{hat-kill}
 T({\mathcal W}) (\hat{f})=0.
\end{equation}
\end{lem}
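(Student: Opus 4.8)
The plan is to prove both formulas directly from the definition of the natural lift. For the first identity \eqref{x-f}, I would start from \eqref{ext}, which expresses $\hat f$ as $f$ evaluated at the functions $u^{\alpha} := \eta^{\alpha\beta}\langle\langle \tau_{0,1}\tau_{0,\beta}\rangle\rangle_{0}$. Applying a vector field ${\mathcal W}$ and using the chain rule gives
\[
{\mathcal W}(\hat f) = \sum_{\alpha}\left[\frac{\partial f}{\partial t^{\alpha}_{0}}\right]^{\wedge}\,{\mathcal W}(u^{\alpha}),
\]
so the whole computation reduces to showing ${\mathcal W}(u^{\alpha}) = \eta^{\alpha\sigma}\langle\langle\tau_{0,1}\tau_{0,\sigma}{\mathcal W}\rangle\rangle_{0}$. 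This is immediate from Lemma \ref{cr}, equation \eqref{deriv-tensor}, applied with $k=2$, ${\mathcal W}_{1}=\tau_{0,1}$, ${\mathcal W}_{2}=\tau_{0,\sigma}$: the derivative-of-tensor formula produces a sum of terms in which ${\mathcal W}$ hits one of the arguments via $\nabla_{\mathcal W}$, plus the extra term $\langle\langle{\mathcal W}\tau_{0,1}\tau_{0,\sigma}\rangle\rangle_{0}$. Since $\tau_{0,1}$ and $\tau_{0,\sigma}$ are the coordinate vector fields $\partial/\partial t^{1}_{0}$ and $\partial/\partial t^{\sigma}_{0}$, they are $\nabla$-parallel by the definition of $\nabla$ in Lemma \ref{cr} (their coefficients $f_{m,\alpha}$ are constant), so the $\nabla_{\mathcal W}$-terms vanish and only the last term survives. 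Raising the index $\beta$ with $\eta^{\alpha\beta}$ and relabelling gives exactly \eqref{x-f}.

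For the second identity \eqref{hat-kill}, I would substitute ${\mathcal W}=T({\mathcal V})$ for an arbitrary ${\mathcal V}\in{\mathcal T}^{1,0}_{M^{\infty}}$ into \eqref{x-f}. The right-hand side becomes
\[
\left[\frac{\partial f}{\partial t^{\beta}_{0}}\right]^{\wedge}\eta^{\beta\sigma}\langle\langle\tau_{0,1}\tau_{0,\sigma}\,T({\mathcal V})\rangle\rangle_{0}.
\]
The key point is that the three-point function $\langle\langle\tau_{0,1}\tau_{0,\sigma}\,T({\mathcal V})\rangle\rangle_{0}$ vanishes. This follows from the compact form of the Topological Recursion Relation, equation \eqref{top-rec-rel}: one has $T({\mathcal V})\circ{\mathcal W}' = 0$ for any ${\mathcal W}'$, and since the genus-zero three-point function is related to the quantum product by $\langle\langle {\mathcal W}_{1}{\mathcal W}_{2}{\mathcal W}_{3}\rangle\rangle_{0} = \hat\eta$-type pairing of ${\mathcal W}_{1}\circ{\mathcal W}_{2}$ with ${\mathcal W}_{3}$ — more concretely, $\langle\langle{\mathcal W}_{1}{\mathcal W}_{2}\gamma^{\sigma}\rangle\rangle_{0}\gamma_{\sigma} = {\mathcal W}_{1}\circ{\mathcal W}_{2}$, and the genus-zero correlators are fully symmetric — we get $\langle\langle\tau_{0,1}\tau_{0,\sigma}\,T({\mathcal V})\rangle\rangle_{0} = \langle\langle T({\mathcal V})\,\tau_{0,1}\,\tau_{0,\sigma}\rangle\rangle_{0}$, which is the $\tau_{0,\sigma}$-component (lowered) of $T({\mathcal V})\circ\tau_{0,1} = 0$. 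Hence every term vanishes and $T({\mathcal V})(\hat f)=0$.

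The main obstacle, such as it is, is bookkeeping: making sure the symmetry of the genus-zero correlators is invoked correctly so that $T$ appears in the first slot where \eqref{top-rec-rel} applies, and making sure that $\tau_{0,1},\tau_{0,\sigma}$ are genuinely $\nabla$-parallel (which is clear from the explicit formula for $\nabla$ but should be stated). No delicate analysis is required — everything reduces to the chain rule plus two results already established in the excerpt (Lemma \ref{cr} and the TRR \eqref{top-rec-rel}).
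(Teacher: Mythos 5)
Your proof is correct and follows essentially the same route as the paper: the chain rule applied to $\hat f = f\circ\vec u$ combined with the derivative-of-correlator formula \eqref{deriv-tensor} (where the $\nabla$-terms drop because $\tau_{0,1},\tau_{0,\sigma}$ are $\nabla$-parallel) gives \eqref{x-f}, and \eqref{hat-kill} then follows from the Topological Recursion Relation \eqref{top-rec-rel} together with the definition of the quantum product. The paper states these last two steps more tersely, but the argument is the same.
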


\begin{proof}
Consider the vector valued
function
$$
\vec{u} := (u^{1},\cdots ,u^{N}),\quad
u^{\alpha}:= \eta^{\alpha\beta} <<\tau_{0,1}\tau_{0,\beta}>>_{{}_0}.
$$
From (\ref{ext}),  $\hat{f} = f\circ \vec{u}$
and
\begin{equation}\label{deriv-1}
\frac{\partial\hat{f}}{\partial t^{\alpha}_{n}}=
\left[\frac{\partial f}{\partial t^{\beta}_{0}}\right]^{\wedge}
\frac{\partial u^{\beta}}{\partial t^{\alpha}_{n}}=
\left[\frac{\partial f}{\partial t^{\beta}_{0}}\right]^{\wedge}
\eta^{\beta\sigma} <<
\tau_{0,1}\tau_{0,\sigma}\tau_{n,\alpha}>>_{{}_0},
\end{equation}
where we used (\ref{deriv-tensor}).
Relation (\ref{x-f}) follows.  Relation (\ref{hat-kill}) follows from (\ref{x-f}),  the
Topological Recusion Relation (\ref{top-rec-rel}) and the definition
of the quantum product.\\

\end{proof}

\begin{rem}\label{rem-ajut}{\rm
In computations we shall often use
$$
{\mathcal W}(\hat{f}) = \left[ \frac{\partial f}{\partial
\overline{t_{0}^{\beta}}}\right]^{\wedge}
\overline{\eta^{\beta\sigma} << \tau_{0,1}\tau_{0,\sigma},
\overline{\mathcal W} >>_{{}_0}},
$$
for any $f\in C^{\infty}(M)$ and
$\mathcal W\in {\mathcal T}^{0,1}_{M^{\infty}}$,
and also
\begin{equation}\label{tu-conj}
\overline{T({\mathcal W})} (\hat{f})=0, \quad \forall f\in
C^{\infty}(M), \quad \forall {\mathcal W}\in {\mathcal
T}^{1,0}_{M^{\infty}}.
\end{equation}
These follow by taking conjugations of (\ref{x-f}) and
(\ref{hat-kill}), and using that conjugations commute with natural
lifts of functions.}

\end{rem}

\subsection{Natural lifts of arbitrary tensor fields}\label{general}

Using the above ideas  one may
lift any tensor field, from $M$ to $M^{\infty}$, in the
following way. We first extend componentwise the endomorphism $T$, defined by
(\ref{endom-T}), to products $T^{1,0}M^{\infty}\times \cdots
\times T^{1,0}M^{\infty}$ ($p\geq 0$ factors; when $p=0$ the product
reduces to the trivial bundle $M^{\infty}\times\mathbb{C}$ and
$T$ is the identity operator).
Similarly, we define a map
\begin{equation}\label{ridic}
{}^{\wedge} : {\mathcal T}^{1,0}M\times \cdots \times {\mathcal T}^{1,0}M
\rightarrow
 {\mathcal T}^{1,0}M^{\infty}\times \cdots \times {\mathcal T}^{1,0}
M^{\infty} ,
\end{equation}
($p$-factors in both products) which, for $p\geq 1$, is given by
$$
(X_{1},
\cdots , X_{p})^{\wedge}:= (\widehat{X_{1}}, \cdots  , \widehat{X_{p}}).
$$
For $p=0$, the map (\ref{ridic})  is defined on  $C^{\infty}(M)$ and
is just the natural lift of functions.
With this preliminary notation,
let
${\mathcal F}$ be a $(p,q)$-tensor field  on $M$,
i.e. a map
$$
{\mathcal F}: T^{1,0}M\times \cdots \times T^{1,0}M \rightarrow
T^{1,0}M\times \cdots \times T^{1,0}M,
$$
($p$-factors in the left hand side, $q$  in the right hand side), which is
$C^{\infty}(M)$-linear  (in all arguments) or $C^{\infty}(M, \mathbb{R})$-linear and
complex anti-linear (in all arguments).
The natural lift
$\hat{\mathcal F}$ of $\mathcal F$ is a tensor field of the same type on $M^{\infty}$, defined by
$$
\hat{\mathcal F}\left( T^{n_{1}}(\tau_{0,\alpha_{1}}), \cdots ,
T^{n_{p}}(\tau_{0,\alpha_{p}})\right)=
 \delta_{n_{1},\cdots ,n_{p}} T^{n_{i}} \left(\left[ {\mathcal
F}(\tau_{0,\alpha_{1}}, \cdots ,
\tau_{0,\alpha_{p}})\right]^{\wedge}\right) ,
$$
where $\delta_{n_{1},\cdots ,n_{p}}=0$ unless all $n_{i}$ are
equal and $\delta_{n\cdots n}=1$.

\begin{rem}{\rm From the definition of $\hat{\mathcal F}$, for
any vector fields
$X_{1}, \cdots , X_{p}\in {\mathcal T}^{1,0}_{M}$,
\begin{equation}\label{more-general}
\hat{\mathcal F}\left( T^{n_{1}}(\hat{X}_{1}), \cdots ,
T^{n_{p}}(\hat{X}_{p})\right) = \delta_{n_{1},\cdots ,n_{p}}
T^{n_{i}}\left( \left[ {\mathcal F}(X_{1}, \cdots ,
X_{p})\right]^{\wedge}\right) .
\end{equation}}
\end{rem}

\begin{ex}{\rm i) Liu's metric (\ref{hat-begin}) (see \cite{liu4}) already mentioned in the Introduction is the natural
lift of the Poincar\'e metric ${\eta}$:
\begin{equation}\label{hat-eta}
\hat{\eta}\left(T^{n}(\tau_{0,\alpha}),
T^{m}(\tau_{0,\beta})\right) =\delta_{mn} \eta (
\tau_{0,\alpha},\tau_{0,\beta}),\quad\forall m,n
\end{equation}
the right hand side of the above expression being constant, hence
coincides with its natural extension.\\

ii) The natural lift $\hat{A}$ of an endomorphism $A$ of
$T^{1,0}M$ (viewed as a $(1,1)$-tensor field) is given by
$$
\hat{A} \left( T^{m}(\tau_{0,\alpha})\right) = T^{m} \left(
\left[A(\tau_{0,\alpha})\right]^{\wedge}\right) .
$$
Note that $[A,B]^{\wedge} = [\hat{A},\hat{B}]$, for any  endomorphisms $A$ and $B$ of $T^{1,0}M.$
}
\end{ex}

\section{The lifted pseudo-Hermitian
metric}\label{hermitian-str}

We now consider a real structure $k:T^{1,0}M\rightarrow T^{1,0}M$
compatible with $\eta$, i.e. $h:= \eta (\cdot , k\cdot )$ is a
pseudo-Hermitian metric. It is easy to check that its natural lift
$\hat{k}:T^{1,0}M^{\infty}\rightarrow T^{1,0}M^{\infty}$
is a real structure, compatible with the natural lift $\hat{\eta}$
of $\eta$, and that
$\hat{k}$ and $\hat{\eta}$  give rise to a
pseudo-Hermitian metric $\hat{h}=\hat{\eta}(\cdot , \hat{k}\cdot)$,
which is the natural lift
of ${h}.$ In this section we compute the Chern connection and the
curvature of $\hat{h}.$ For completeness of our exposition, we recall
the expression of $\hat{h}.$

\begin{defn}\label{h-k-def} The natural lift $\hat{h}$ of $h$ to $M^{\infty}$
is defined by
\begin{equation}\label{hat-h-0}
\hat{h}\left(T^{n}(\tau_{0,\alpha}), T^{m}(\tau_{0,\beta})\right) =\delta_{mn}\left[ h( \tau_{0,\alpha},\tau_{0,\beta})\right]^{\wedge},\quad\forall
m,n.
\end{equation}

\end{defn}

\noindent More generally,
\begin{equation}\label{hat-h}
\hat{h}( T^{n}(\hat{X}), T^{m}(\hat{Y}))= \delta_{mn}
\left[ h(X,Y)\right]^{\wedge}, \quad\forall X,Y\in {\mathcal T}^{1,0}_{M}.
\end{equation}

To simplify the expression of the Chern connection of $\hat{h}$
and other expressions we define the functions (following Getzler \cite{Getzler})
\begin{equation}\label{M}
M^{\gamma}_{\alpha}:= \eta^{\gamma\sigma} <<
\tau_{0,1}\tau_{0,\sigma}\tau_{0,\alpha}>>_{{}_0},\quad 1\leq \alpha
,\gamma \leq N
\end{equation}
and study some of their basic properties (see Remark \ref{deriv-rem}
and Lemma \ref{deriv-M}).

\begin{rem}\label{deriv-rem}{\rm With the above notation,
relation (\ref{x-f}) implies
\begin{equation}\label{deriv-new}
\tau_{0,\alpha} (\hat{f}) = \left[\frac{\partial f}{\partial
t_{0}^{\beta}} \right]^{\wedge}M_{\alpha}^{\beta},\quad \forall f\in
C^{\infty}(M).
\end{equation}
Notice that relation (\ref{deriv-new}) implies
$M_{\alpha}^{\sigma} = \tau_{0,\alpha}(\widehat{f^{\sigma}})$,
where $f^{\sigma}(t_{0}^{1},\cdots , t_{0}^{N})= t_{0}^{\sigma}.$
It also follows from the String Equation that
\[
\left. M^\sigma_\alpha \right|_M = \delta^\sigma_\alpha\,.
\]
}
\end{rem}

In what follows we shall use the derivatives of the functions $M_{\alpha}^{\sigma}$
along vector fields from the image of the operator $T$.
We will compute these derivatives now, and then compute the Chern connection of $\hat{h}.$

\begin{lem}\label{deriv-M}
The following relations hold: for any $1\leq \alpha ,\beta
,\sigma\leq N$,
$$
T(\tau_{0,\beta})(M_{\alpha}^{\sigma})= (\tau_{0,\alpha}\circ
\tau_{0,\beta})(\widehat{f^{\sigma}})
= \eta^{\sigma \mu}<< \tau_{0,1}\tau_{0,\mu}(\tau_{0,\alpha}\circ
\tau_{0,\beta})>>_{{}_0}
$$
and
$$
T^{q}(\tau_{0,\beta})(M_{\alpha}^{\sigma})=0,\quad\forall q\geq 2.
$$
\end{lem}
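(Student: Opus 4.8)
The plan is to compute each derivative by applying formula (\ref{x-f}) of Lemma \ref{ian} with $f = f^{\sigma}$, the linear coordinate function $t_0^{\sigma}$, and then to exploit the commutator identities from Lemma \ref{cr} together with the Topological Recursion Relation in the form (\ref{top-rec-rel}).

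First I would treat $T(\tau_{0,\beta})(M_{\alpha}^{\sigma})$. Since $M_{\alpha}^{\sigma} = \tau_{0,\alpha}(\widehat{f^{\sigma}})$ by Remark \ref{deriv-rem}, I would write
\[
T(\tau_{0,\beta})\left(\tau_{0,\alpha}(\widehat{f^{\sigma}})\right)
= \tau_{0,\alpha}\left(T(\tau_{0,\beta})(\widehat{f^{\sigma}})\right)
+ \left[T(\tau_{0,\beta}),\tau_{0,\alpha}\right](\widehat{f^{\sigma}}).
\]
The first term vanishes by (\ref{hat-kill}) of Lemma \ref{ian}. For the second, note $T(\tau_{0,\beta}) = \tau_{1,\beta} - \mathcal{S}\circ\tau_{1,\beta}$; the vector field $\mathcal{S}\circ\tau_{1,\beta}$ is primary, hence a combination of the $\tau_{0,\mu}$, so $[\mathcal{S}\circ\tau_{1,\beta},\tau_{0,\alpha}]$ annihilates $\widehat{f^{\sigma}}$ only modulo a correction coming from the non-constancy of the coefficients; more simply, I would use directly the bracket relation $[T^{n}(\gamma_\alpha),\gamma_\beta] = T^{n-1}(\gamma_\alpha\circ\gamma_\beta)$ from Lemma \ref{cr} with $n=1$, which gives $[T(\tau_{0,\beta}),\tau_{0,\alpha}] = -[\tau_{0,\alpha},T(\tau_{0,\beta})] = -T^{0}(\tau_{0,\alpha}\circ\tau_{0,\beta}) = -(\tau_{0,\alpha}\circ\tau_{0,\beta})$. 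Wait — more carefully, Lemma \ref{cr} gives $[T(\gamma_\alpha),\gamma_\beta] = T^{0}(\gamma_\alpha\circ\gamma_\beta) = \gamma_\alpha\circ\gamma_\beta$ — but I need the bracket with $\tau_{0,\alpha}$ on the left. Up to the overall sign, which I would pin down from the statement of Lemma \ref{cr}, this yields
\[
T(\tau_{0,\beta})(M_{\alpha}^{\sigma}) = \pm(\tau_{0,\alpha}\circ\tau_{0,\beta})(\widehat{f^{\sigma}}),
\]
and then I would apply (\ref{deriv-new}) (or directly (\ref{x-f})) to the primary field $\tau_{0,\alpha}\circ\tau_{0,\beta}$ to rewrite this as $\eta^{\sigma\mu}\langle\langle\tau_{0,1}\tau_{0,\mu}(\tau_{0,\alpha}\circ\tau_{0,\beta})\rangle\rangle_{0}$. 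Since $f^{\sigma}$ is linear, $[\partial f^{\sigma}/\partial t_0^{\mu}]^{\wedge} = \delta^{\sigma}_{\mu}$ is constant, which is why no second-derivative terms intervene; this needs to be checked to confirm the claimed clean form.

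For the vanishing statement $T^{q}(\tau_{0,\beta})(M_{\alpha}^{\sigma})=0$ for $q\geq 2$, I would again write $M_{\alpha}^{\sigma} = \tau_{0,\alpha}(\widehat{f^{\sigma}})$ and commute:
\[
T^{q}(\tau_{0,\beta})\left(\tau_{0,\alpha}(\widehat{f^{\sigma}})\right)
= \tau_{0,\alpha}\left(T^{q}(\tau_{0,\beta})(\widehat{f^{\sigma}})\right)
+ \left[T^{q}(\tau_{0,\beta}),\tau_{0,\alpha}\right](\widehat{f^{\sigma}}).
\]
The first term is zero by (\ref{hat-kill}). For the bracket, Lemma \ref{cr} gives $[T^{q}(\gamma_\beta),\gamma_\alpha] = T^{q-1}(\gamma_\beta\circ\gamma_\alpha)$, which lies in the image of $T$ for $q\geq 2$; hence it annihilates $\widehat{f^{\sigma}}$, once more by (\ref{hat-kill}). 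This gives $T^{q}(\tau_{0,\beta})(M_{\alpha}^{\sigma})=0$ for all $q\geq 2$.

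The main obstacle I anticipate is bookkeeping rather than conceptual: getting the sign in the first identity exactly right (the bracket relation in Lemma \ref{cr} is stated as $[T^{n}(\gamma_\alpha),\gamma_\beta] = T^{n-1}(\gamma_\alpha\circ\gamma_\beta)$, so commuting in the order I want introduces a minus sign which must cancel against another, or the final formula carries the sign it carries), and confirming that the expression $(\tau_{0,\alpha}\circ\tau_{0,\beta})(\widehat{f^{\sigma}})$ — where the quantum product of two primary fields is itself primary with non-constant coefficients — is correctly evaluated via (\ref{x-f}) with no leftover terms. Both of these are short checks, but they are where an error would most easily creep in.
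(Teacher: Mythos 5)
Your proposal is correct and follows essentially the same route as the paper: write $M_{\alpha}^{\sigma}=\tau_{0,\alpha}(\widehat{f^{\sigma}})$, kill the term $\tau_{0,\alpha}\bigl(T^{q}(\tau_{0,\beta})(\widehat{f^{\sigma}})\bigr)$ via (\ref{hat-kill}), reduce to the bracket $[T^{q}(\tau_{0,\beta}),\tau_{0,\alpha}]=T^{q-1}(\tau_{0,\alpha}\circ\tau_{0,\beta})$ from Lemma \ref{cr}, and evaluate the $q=1$ case with (\ref{x-f}). The sign you were worried about is $+$: the bracket identity of Lemma \ref{cr} applies directly in the order you need (with $T^{q}(\tau_{0,\beta})$ in the first slot), so no sign flip is introduced and your formula matches the statement.
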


\begin{proof}
We use $M_{\alpha}^{\sigma} =
\tau_{0,\alpha}(\widehat{f^{\sigma}})$ (from the previous remark)
and that $\widehat{f^{\sigma}}$ is annihilated by
$T^{q}(\tau_{0,\beta})$ for any $q\geq 1$ (see Lemma \ref{ian}).
Recall, also, that
$$
[T^{q}(\tau_{0,\beta}), \tau_{0,\alpha}] =
T^{q-1}(\tau_{0,\alpha}\circ \tau_{0,\beta}),
$$
(see Lemma \ref{cr}). We obtain: for any $q\geq 1$,
\begin{eqnarray*}
T^{q}(\tau_{0,\beta})\left( M_{\alpha}^{\sigma}\right) & = & T^{q}
(\tau_{0,\beta})\tau_{0,\alpha}(\widehat{f^{\sigma}})\\
& = & [T^{q}(\tau_{0,\beta}), \tau_{0,\alpha} ] ( \widehat{f^{\sigma}})
 = T^{q-1}\left( \tau_{0,\alpha}\circ\tau_{0,\beta}\right)(
\widehat{f^{\sigma}}) ,
\end{eqnarray*}
which vanishes when $q\geq 2$. When $q=1$, we obtain
$$
T(\tau_{0,\beta})\left( M_{\alpha}^{\sigma}\right) =
(\tau_{0,\alpha}\circ\tau_{0,\beta}) ( \widehat{f^{\sigma}}) =
\eta^{\sigma \mu} <<\tau_{0,1}\tau_{0,\mu}(\tau_{0,\alpha}\circ
\tau_{0,\beta})>>_{{}_0}
$$
where in the last equality  we used
(\ref{x-f}), with $f:= f^{\sigma}$ and
${\mathcal W}=\tau_{0,\alpha}\circ\tau_{0,\beta}$.
\end{proof}

From the above lemma, $T^q(\tau_{0,\beta})(M^\sigma_\alpha)$ is
symmetric in $\alpha$ and $\beta\,.$

\begin{lem}\label{chern}
Let $D$ be the Chern connection of $h.$ The Chern connection $\hat{D}$ of $\hat{h}$ is given by: for any $n\geq 0$ and $1\leq \alpha , \beta\leq N$,
\begin{equation}\label{chern-1}
\hat{D}_{\tau_{0,\alpha}}\left( T^{n}(\tau_{0,\beta})\right) =
M^{\sigma}_{\alpha} T^{n}\left( \left[
D_{\tau_{0,\sigma}}(\tau_{0,\beta})\right]^{\wedge}\right)
\end{equation}
and
\begin{equation}\label{chern-2}
\hat{D}_{T({\mathcal W})}\left( T^{n}( \tau_{0,\alpha})\right) =0, \quad\forall {\mathcal W}\in{\mathcal T}^{1,0}_{M^{\infty}}.
\end{equation}

\end{lem}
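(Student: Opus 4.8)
The plan is to verify the two defining properties of a Chern connection on the pseudo-Hermitian bundle $(T^{1,0}M^\infty,\hat h)$ for the connection $\hat D$ prescribed by (\ref{chern-1})--(\ref{chern-2}); by uniqueness of the Chern connection this will establish the lemma. Recall that the Chern connection of $\hat h$ is the unique connection $\hat D$ on $T^{1,0}M^\infty$ which is (i) compatible with $\hat h$, i.e. $\mathcal V \big(\hat h(\mathcal U_1,\mathcal U_2)\big) = \hat h(\hat D_{\mathcal V}\mathcal U_1,\mathcal U_2) + \hat h(\mathcal U_1, \hat D_{\overline{\mathcal V}}\mathcal U_2)$ for all $\mathcal V$, and (ii) of type $(1,0)$ in the sense that its $(0,1)$-part agrees with the $\bar\partial$-operator of the holomorphic structure, i.e. $\hat D_{\overline{\mathcal W}} T^n(\hat X) = 0$ for $\hat X$ holomorphic (here the frame $T^n(\tau_{0,\beta})$ is holomorphic since $\tau_{0,\beta}$ is). So I must check that the formulas (\ref{chern-1})--(\ref{chern-2}) (together with their conjugates, defining $\hat D$ on $(0,1)$-directions via $\hat D_{\overline{\mathcal W}}$) are consistent, are $C^\infty$-linear/derivation-like in the right way, and satisfy (i) and (ii).

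First I would observe that formulas (\ref{chern-1}) and (\ref{chern-2}) do in fact determine $\hat D_{\mathcal W}$ for \emph{every} $(1,0)$ vector field $\mathcal W$ on $M^\infty$: by Lemma \ref{cr} the vector fields $\{T^n(\tau_{0,\beta})\}$ form a global frame, and the span of $\{\tau_{0,\alpha}\} \cup \{T(\mathcal W) : \mathcal W \in \mathcal T^{1,0}_{M^\infty}\}$ is the whole $\mathcal T^{1,0}_{M^\infty}$ (indeed $T^n(\tau_{0,\beta}) = T\big(T^{n-1}(\tau_{0,\beta})\big)$ for $n\geq 1$), so (\ref{chern-1}) handles the primary directions and (\ref{chern-2}) handles the image of $T$; one must check these are consistent on the overlap, but the overlap is trivial since no nonzero primary field lies in $\mathrm{im}(T)$ (compare the $\tau_{n,\alpha}$-expansions). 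Then I would check (ii): $\hat D$ has no $(0,1)$-part acting on the holomorphic frame by construction (we only prescribed $\hat D_{\mathcal W}$ for $\mathcal W$ of type $(1,0)$, and extend $\hat D_{\overline{\mathcal W}} := \overline{\hat D_{\mathcal W}}$ suitably via the conjugate frame), so the holomorphicity condition is automatic.

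The substantive point is the metric-compatibility (i). By $C^\infty(M^\infty)$-bilinearity and the conjugate-symmetry it suffices to check it on frame fields, and by the span remark above it suffices to test the direction $\mathcal V$ against $\tau_{0,\alpha}$ and against $T(\mathcal W)$ separately. For the direction $T(\mathcal W)$: by (\ref{hat-h-0}) the function $\hat h\big(T^n(\tau_{0,\alpha}), T^m(\tau_{0,\beta})\big) = \delta_{mn}[h(\tau_{0,\alpha},\tau_{0,\beta})]^\wedge$ is a natural lift of a function on $M$, hence is annihilated by $T(\mathcal W)$ and by $\overline{T(\mathcal W)}$ thanks to Lemma \ref{ian}, relation (\ref{hat-kill}), and Remark \ref{rem-ajut}, relation (\ref{tu-conj}); and the right-hand side $\hat h(\hat D_{T(\mathcal W)}\,\cdot\,,\,\cdot\,) + \hat h(\,\cdot\,,\hat D_{\overline{T(\mathcal W)}}\,\cdot\,)$ vanishes by (\ref{chern-2}) and its conjugate. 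So (i) holds trivially in the $\mathrm{im}(T)$ directions — this is exactly the phenomenon that makes the lift work. For the direction $\tau_{0,\alpha}$: expanding $\tau_{0,\alpha}\big(\hat h(T^n(\tau_{0,\beta}),T^m(\tau_{0,\gamma}))\big) = \delta_{mn}\,\tau_{0,\alpha}\big([h(\tau_{0,\beta},\tau_{0,\gamma})]^\wedge\big)$ and using (\ref{deriv-new}) of Remark \ref{deriv-rem}, this equals $\delta_{mn} M_\alpha^\sigma\,[(\partial_{t_0^\sigma} h(\tau_{0,\beta},\tau_{0,\gamma}))]^\wedge = \delta_{mn} M_\alpha^\sigma\,[(D_{\tau_{0,\sigma}}h)(\tau_{0,\beta},\tau_{0,\gamma}) + h(D_{\tau_{0,\sigma}}\tau_{0,\beta},\tau_{0,\gamma}) + h(\tau_{0,\beta},D_{\overline{\tau_{0,\sigma}}}\tau_{0,\gamma})]^\wedge$; since $D$ is the Chern connection of $h$ the first term drops, and what remains is precisely $\hat h(\hat D_{\tau_{0,\alpha}}T^n(\tau_{0,\beta}), T^m(\tau_{0,\gamma})) + \hat h(T^n(\tau_{0,\beta}), \hat D_{\overline{\tau_{0,\alpha}}}T^m(\tau_{0,\gamma}))$ once one unwinds (\ref{chern-1}), the formula (\ref{hat-h}) for $\hat h$ on lifted fields, and the fact that $T$ is $\hat\eta$- (hence $\hat h$-) compatible on the frame in the sense of (\ref{hat-begin})/(\ref{hat-eta}). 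The main obstacle will be bookkeeping the conjugate side correctly: one must be careful that $\hat D_{\overline{\tau_{0,\alpha}}}$ produces the factor $\overline{M_\alpha^\sigma}$ matched against $\overline{\partial_{\overline{t_0^\sigma}}}$, that the index $n=m$ constraints coming from $\hat h$ and from $T^n$-covariance in (\ref{chern-1}) are compatible, and that the natural-lift map commutes with the relevant derivatives (which is the content of Lemma \ref{ian} and Remark \ref{deriv-rem}); but no genuinely new idea beyond these cited lemmas is needed.
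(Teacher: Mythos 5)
Your proposal is correct and rests on exactly the same two computations as the paper's proof: the identity $\tau_{0,\alpha}(\widehat{h_{\beta\gamma}}) = M_\alpha^\sigma\left[h(D_{\tau_{0,\sigma}}\tau_{0,\beta},\tau_{0,\gamma})\right]^\wedge$ coming from Remark \ref{deriv-rem} together with $D$ being the Chern connection of $h$, and the annihilation of the natural lifts $\widehat{h_{\beta\gamma}}$ by $\mathrm{Im}(T)$ from Lemma \ref{ian}. The only difference is one of direction — the paper evaluates the defining relation of $\hat{D}$ on the holomorphic frame $T^{n}(\tau_{0,\alpha})$ and solves for $\hat{D}$ using the non-degeneracy of $\hat{h}$, whereas you verify that the stated formulas define a connection satisfying the Chern axioms and invoke uniqueness — which is logically equivalent and computationally identical.
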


\begin{proof}
By definition, the Chern connection
$\hat{D}$ satisfies
\begin{equation}\label{chern-det}
{\mathcal W}_{1}\hat{h}({\mathcal W}_{2}, {\mathcal W}_{3}) =
\hat{h}( \hat{D}_{{\mathcal W}_{1}}\mathcal W_{2}, \mathcal
W_{3}),\quad \forall {\mathcal W}_{1}, {\mathcal W}_{2}, {\mathcal
W}_{3}\in {\mathcal T}_{M^{\infty}}.
\end{equation}
Let ${\mathcal W}_{1}:= \tau_{0,\alpha}$, ${\mathcal W}_{2}:=
T^{n}(\tau_{0,\beta})$, ${\mathcal W}_{3}:=
T^{m}(\tau_{0,\gamma})$. With these arguments, equation
(\ref{chern-det}) becomes
\begin{equation}\label{delta-m-n}
\delta_{nm}\tau_{0,\alpha }(\widehat{h_{\beta \gamma}}) = \hat{h}
\left( \hat{D}_{\tau_{0,\alpha}} \left(
T^{n}(\tau_{0,\beta})\right) , T^{m} (\tau_{0,\gamma })\right) ,
\end{equation}
where,  to simplify notation, we  defined $ h_{\beta \gamma} :=h(\tau_{0,\beta
}, \tau_{0,\gamma }).$ But from (\ref{deriv-new}),
\begin{equation}\label{delta-m}
\tau_{0,\alpha}( \widehat{h_{\beta \gamma}})  = \left[
\frac{\partial h_{\beta\gamma}}{\partial
t_{0}^{\sigma}}\right]^{\wedge}
M_{\alpha}^{\sigma} = M^{\sigma}_{\alpha}
\left[ h\left(  D_{\tau_{0,\sigma}}(\tau_{0,\beta}),
\tau_{0,\gamma}\right) \right]^{\wedge}.
\end{equation}
Combining (\ref{delta-m-n}) with (\ref{delta-m}) we obtain:
$\forall n,m,\alpha ,\beta,$
$$
M_{\alpha}^{r} \hat{h} \left( T^{n}\left( \left[
D_{\tau_{0,r}}(\tau_{0,\beta})\right]^{\wedge}\right) ,
T^{m}(\tau_{0,\gamma })\right)\\
= \hat{h} \left( \hat{D}_{\tau_{0,\alpha}}\left(
T^{n}(\tau_{0,\beta}) \right) , T^{m} (\tau_{0,\gamma })\right) .
$$
From the non-degeneracy of $\hat{h}$, we obtain (\ref{chern-1}).

We now  prove (\ref{chern-2}). With ${\mathcal W}_{2}$ and
${\mathcal W}_{3}$ as above, $\hat{h}({\mathcal W}_{2}, {\mathcal
W}_{3})$ is the natural extension of $\delta_{mn}
h_{\beta\gamma}$. Thus, if ${\mathcal W}_{1}= T({\mathcal W})$ in
the image of $T$,  it annihilates $\hat{h}({\mathcal W}_{2},
{\mathcal W}_{3})$ (see Lemma \ref{ian}) and from
(\ref{chern-det}),
\begin{equation}\label{ch1}
\hat{h}\left( \hat{D}_{T({\mathcal W})}\left(  T^{n} (\tau_{0,\beta })\right), T^{m}(\tau_{0, \gamma })\right) = 0.
\end{equation}
Using the non-degeneracy of $\hat{h}$ again,
we obtain (\ref{chern-2}).

\end{proof}

From Lemmas \ref{ian} and \ref{chern},
the covariant derivatives with respect to $\hat{D}$
of vector fields $T^{n}(\hat{X})$, in directions
from the image of $T$,  vanish:
\begin{equation}\label{d-d}
\hat{D}_{T({\mathcal W})} \left( T^{n}(\hat{X})\right) =0,\quad
\forall {\mathcal W}\in {\mathcal T}_{M^{\infty}},\quad X\in
{\mathcal T}_{M}.
\end{equation}
This fact will be used often in the computations from the next
sections.

\begin{lem}\label{curvature}The curvature
${}^{\hat{D}}\! R$ of $\hat{D}$ is related to the curvature
${}^{D} \! R$ of $D$ by:
\begin{equation}\label{curv1}
{}^{\hat{D}}\! R_{\tau_{0,\alpha},
\overline{\tau_{0,\beta}}}\left(  T^{r} (\tau_{0,\gamma} )\right)
=M_{\alpha}^{\sigma} \overline{M^{\nu}_{\beta}} T^{r}\left( \left[
{}^{D}\!
R_{\tau_{0,\sigma},\overline{\tau_{0,\nu}}}(\tau_{0,\gamma})\right]^{\wedge}\right)
,
\end{equation}
where  $1\leq\alpha , \beta ,\gamma\leq N$ and $r\geq 0$. If $m$
or $n$ is bigger than zero,
$$
{}^{\hat{D}}\! R_{T^{n}(\tau_{0,\alpha}),
\overline{T^{m}(\tau_{0,\beta})}} \left( T^{r} (\tau_{0,\gamma}
)\right) =0.
$$

\end{lem}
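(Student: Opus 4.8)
The plan is to compute the curvature of $\hat{D}$ directly from the formula
\[
{}^{\hat{D}}\! R_{\mathcal{U},\bar{\mathcal{V}}} = [\hat{D}_{\mathcal{U}}, \hat{D}_{\bar{\mathcal{V}}}] - \hat{D}_{[\mathcal{U},\bar{\mathcal{V}}]},
\]
using the explicit description of $\hat{D}$ given in Lemma \ref{chern}, together with Lemma \ref{deriv-M} (which controls $T^q(\tau_{0,\beta})(M^\sigma_\alpha)$) and Lemma \ref{ian} (which says lifted functions are killed by vector fields in the image of $T$). I would first treat the generic case $m=n=0$, i.e. evaluate ${}^{\hat{D}}\! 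R_{\tau_{0,\alpha},\overline{\tau_{0,\beta}}}$ on $T^r(\tau_{0,\gamma})$. For this I would apply (\ref{chern-1}) and its conjugate repeatedly: $\hat{D}_{\tau_{0,\alpha}}(T^r(\tau_{0,\gamma})) = M^\sigma_\alpha T^r([D_{\tau_{0,\sigma}}(\tau_{0,\gamma})]^\wedge)$, then differentiate again along $\overline{\tau_{0,\beta}}$. Here a key simplification is that $\overline{\tau_{0,\beta}}$ kills the holomorphic function $M^\sigma_\alpha$ (being a lifted holomorphic function, by Lemma \ref{ian} or rather its $(0,1)$-analogue in Remark \ref{rem-ajut}), so the only surviving term is $M^\sigma_\alpha \overline{M^\nu_\beta}\, T^r([D_{\overline{\tau_{0,\nu}}} D_{\tau_{0,\sigma}}(\tau_{0,\gamma})]^\wedge)$; antisymmetrizing in the two derivatives and noting that the bracket term $\hat{D}_{[\tau_{0,\alpha},\overline{\tau_{0,\beta}}]}$ contributes the $D_{[\tau_{0,\sigma},\overline{\tau_{0,\nu}}]}$ piece (one must check $[\tau_{0,\alpha},\overline{\tau_{0,\beta}}]=0$ or handle it carefully, but in flat coordinates these coordinate vector fields commute), this assembles into $M^\sigma_\alpha \overline{M^\nu_\beta}\, T^r([{}^D\! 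R_{\tau_{0,\sigma},\overline{\tau_{0,\nu}}}(\tau_{0,\gamma})]^\wedge)$, which is (\ref{curv1}).

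For the vanishing statement, when $n\geq 1$ (the case $m\geq 1$ is conjugate-symmetric), I would use (\ref{chern-2}): since $T^n(\tau_{0,\alpha})$ for $n\geq 1$ lies in the image of $T$, we have $\hat{D}_{T^n(\tau_{0,\alpha})}(\text{anything of the form }T^r(\tau_{0,\gamma}))=0$ by (\ref{chern-2}), and similarly $\hat{D}_{\overline{T^m(\tau_{0,\beta})}}$ annihilates such fields by the conjugate of (\ref{chern-2}). So in ${}^{\hat{D}}\! R_{T^n(\tau_{0,\alpha}),\overline{T^m(\tau_{0,\beta})}}(T^r(\tau_{0,\gamma}))$ the two "$[\hat{D},\hat{D}]$" terms each vanish — one needs that $\hat{D}_{\overline{T^m(\tau_{0,\beta})}}(T^r(\tau_{0,\gamma}))$ is again a combination of fields on which $\hat{D}_{T^n(\tau_{0,\alpha})}$ acts as zero, which follows because (\ref{chern-2}) kills $\hat{D}_{T(\mathcal W)}$ applied to \emph{any} $T^r(\tau_{0,\gamma})$, and when $m=0$ one gets $\overline{M^\nu_\beta}\,T^r([\cdots]^\wedge)$ times a possibly non-lifted scalar — but actually it is cleanest to argue that $\hat{D}_{\overline{T^m(\tau_{0,\beta})}}$ of such a field stays in the span of $\{T^r(\text{lift})\}$ and is again annihilated by $\hat{D}_{T^n(\tau_{0,\alpha})}$, and conversely. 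Finally the bracket term $\hat{D}_{[T^n(\tau_{0,\alpha}),\overline{T^m(\tau_{0,\beta})}]}$: one computes $[T^n(\tau_{0,\alpha}),\overline{T^m(\tau_{0,\beta})}]$ and checks it lies in the image of $T$ (for $n\geq 1$), so again (\ref{chern-2}) makes it vanish.

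The main obstacle I anticipate is the careful bookkeeping in the vanishing case: ensuring that $\hat{D}_{\overline{T^m(\tau_{0,\beta})}}(T^r(\tau_{0,\gamma}))$ genuinely lies in a span of fields on which $\hat{D}_{T^n(\tau_{0,\alpha})}$ ($n\geq1$) acts by zero (so that the double-derivative terms drop), and that the relevant commutators of the form $[T^n(\tau_{0,\alpha}),\overline{T^m(\tau_{0,\beta})}]$ lie in the image of $T$ or of $\bar{T}$ so that the connection term in the curvature also vanishes. This is where Lemma \ref{cr} (the commutator relations for the $T^n(\gamma_\alpha)$) and Lemma \ref{deriv-M} (which shows $T^q(\tau_{0,\beta})(M^\sigma_\alpha)=0$ for $q\geq 2$, and is symmetric for $q=1$) do the heavy lifting; one likely needs a short auxiliary computation of $[T^n(\tau_{0,\alpha}),\overline{T^m(\tau_{0,\beta})}]$ showing it is a sum of terms each either in $\mathrm{Im}\,T$, in $\mathrm{Im}\,\bar T$, or with the connection acting trivially. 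Modulo that bookkeeping, both formulas in the lemma follow by direct substitution.
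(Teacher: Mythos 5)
Your computation of the main identity (\ref{curv1}) is in substance the paper's: expand the curvature, apply (\ref{chern-1}), let the antiholomorphic derivative act on the lifted structure functions to produce the factor $\overline{M^{\nu}_{\beta}}$ via Remark \ref{rem-ajut}, and assemble into ${}^{D}\!R$. But the obstacle you flag at the end --- the ``bookkeeping'' for the mixed second-derivative term and for the bracket $[T^{n}(\tau_{0,\alpha}),\overline{T^{m}(\tau_{0,\beta})}]$ --- is not where the work is, and the tools you propose for it would not go through as stated. The paper's one structural observation, made before any case analysis, is that all three arguments $\mathcal{W}_{1}=T^{n}(\tau_{0,\alpha})$, $\mathcal{W}_{2}=T^{m}(\tau_{0,\beta})$, $\mathcal{W}_{3}=T^{r}(\tau_{0,\gamma})$ are \emph{holomorphic} vector fields. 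Hence $[\mathcal{W}_{1},\bar{\mathcal{W}}_{2}]=0$ identically (bracket of a holomorphic $(1,0)$ field with an antiholomorphic $(0,1)$ field), and $\hat{D}_{\bar{\mathcal{W}}_{2}}(\mathcal{W}_{3})=\bar{\partial}_{\bar{\mathcal{W}}_{2}}(\mathcal{W}_{3})=0$ because the $(0,1)$-part of a Chern connection is $\bar{\partial}$. The curvature therefore collapses to the single term $-\bar{\partial}_{\bar{\mathcal{W}}_{2}}\hat{D}_{\mathcal{W}_{1}}(\mathcal{W}_{3})$, and both assertions of the lemma become short consequences of Lemma \ref{chern} together with Remark \ref{rem-ajut}.

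Three of your proposed fixes should be replaced by this observation. (i) There is no ``conjugate of (\ref{chern-2})'': the statement you actually need is that $\hat{D}$ in any antiholomorphic direction acts as $\bar{\partial}$, which kills $T^{r}(\tau_{0,\gamma})$ outright --- no span argument about where $\hat{D}_{\overline{T^{m}(\tau_{0,\beta})}}(T^{r}(\tau_{0,\gamma}))$ lands is required, since it is zero. (ii) Lemma \ref{cr} only computes brackets $[T^{n}(\gamma_{\alpha}),T^{m}(\gamma_{\beta})]$ of holomorphic fields with one another, so it cannot be used to evaluate $[T^{n}(\tau_{0,\alpha}),\overline{T^{m}(\tau_{0,\beta})}]$; fortunately that bracket vanishes for the elementary reason above, not because it lies in $\mathrm{Im}(T)$. (iii) The case $m\geq 1$ is not ``conjugate-symmetric'' to $n\geq 1$ at the level of this frame; the genuinely nontrivial vanishing case is $n=0$, $m>0$, where the surviving term is $-M^{\sigma}_{\alpha}\,\overline{T^{m}(\tau_{0,\beta})}\bigl(\widehat{f^{\mu}_{\sigma\gamma}}\bigr)\,T^{r}(\tau_{0,\mu})$ with $D_{\tau_{0,\sigma}}(\tau_{0,\gamma})=f^{\mu}_{\sigma\gamma}\tau_{0,\mu}$, and this vanishes because lifted functions are annihilated by $\overline{T^{m}(\tau_{0,\beta})}$, i.e.\ by (\ref{tu-conj}) of Remark \ref{rem-ajut}. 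With these replacements your argument closes and coincides with the paper's proof.
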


\begin{proof}
For any ${\mathcal W}_{1}, {\mathcal W}_{2}, {\mathcal W}_{3}\in {\mathcal T}_{M^{\infty}}$,
\begin{eqnarray*}
{}^{\hat{D}}\! R_{{\mathcal W}_{1}, \bar{\mathcal W}_{2}}({\mathcal W}_{3}) & = &
\hat{D}_{{\mathcal W}_{1}} \hat{D}_{\bar{\mathcal W}_{2}} ({\mathcal W}_{3}) - \hat{D}_{\bar{\mathcal W}_{2}}\hat{D}_{\mathcal W_{1}} ({\mathcal W}_{3})
-\hat{D}_{[{\mathcal W}_{1},\bar{\mathcal W}_{2}]}({\mathcal W}_{3})\\
& = &  -\bar{\partial}_{\bar{\mathcal W}_{2}}\hat{D}_{\mathcal W_{1}}(\mathcal W_{3}),
\end{eqnarray*}
where we used
$$
[{\mathcal W}_{1}, \bar{\mathcal W}_{2}]=0,\quad \hat{D}_{\bar{\mathcal W}_{2}}({\mathcal W}_{3}) = \bar{\partial}_{\bar{\mathcal W}_{2}}({\mathcal W}_{3}) = 0
$$
because $\mathcal W_{i}$ are holomorphic and
$\hat{D}$ is a Chern connection (hence its $(0,1)$-part is the $\bar{\partial}$ operator). Let
$$
{\mathcal W}_{1}:= T^{n}(\tau_{0,\alpha}),\quad {\mathcal W}_{2}:= T^{m}(\tau_{0,\beta}), \quad {\mathcal W}_{3}:=
T^{r}(\tau_{0,\gamma}).
$$
With these arguments,
\begin{equation}
{}^{\hat{D}}\! R_{ T^{n}(\tau_{0,\alpha}), \overline{
T^{m}(\tau_{0,\beta})}} (T^{r}(\tau_{0,\gamma})) = -
\bar{\partial}_{\overline{T^{m}(\tau_{0,\beta})}
}\hat{D}_{T^{n}(\tau_{0,\alpha})} \left(
T^{r}(\tau_{0,\gamma})\right) .
\end{equation}
From Lemma \ref{chern},  if $n>0$,
$$
{}^{\hat{D}} \! R_{ T^{n}(\tau_{0,\alpha}), \overline{
T^{m}(\tau_{0,\beta})}} (T^{r}(\tau_{0,\gamma}) )=0.
$$
If $n=0$, then, again from Lemma \ref{chern},
\begin{eqnarray}
\nonumber {}^{\hat{D}}\! R_{\tau_{0,\alpha},
\overline{T^{m}(\tau_{0,\beta})}}\left(
T^{r}(\tau_{0,\gamma})\right)
& = & -\overline{\partial}_{\overline{T^{m}(\tau_{0,\beta})}}\left( M^{\sigma}_{\alpha}
T^{r} \left( \left[ D_{\tau_{0,\sigma}}(\tau_{0,\gamma})\right]^{\wedge}\right) \right)\\
\label{ajut}
& = & - M^{\sigma}_{\alpha} T^{r} \left(
\bar{\partial}_{\overline{ T^{m}(\tau_{0,\beta})}}\left( \left[
D_{\tau_{0,\sigma}}(\tau_{0,\gamma})\right]^{\wedge}\right)
\right)
\end{eqnarray}
where in the last relation we used that $T$ and
$M^{\sigma}_{\alpha}$ are holomorphic.
We  define functions $f_{\sigma\gamma}^{\mu}$ on $M$ by the formula
\begin{equation}\label{def-f}
D_{\tau_{0,\sigma}}(\tau_{0,\gamma} )= f_{\sigma\gamma}^{\mu} \tau_{0,\mu}\,.
\end{equation}
With this
notation, relation (\ref{ajut}) becomes
\begin{equation}\label{curv-m}
{}^{\hat{D}}\! R_{\tau_{0,\alpha},
\overline{T^{m}(\tau_{0,\beta})}}\left(
T^{r}(\tau_{0,\gamma})\right) = - \left\{M^{\sigma}_{\alpha}  \overline{
T^{m}(\tau_{0,\beta})}\left(\widehat{f^{\mu}_{\sigma\gamma}}
\right)\right\} \, T^{r}(\tau_{0,\mu})
.
\end{equation}

We now distinguish two cases:\\

1) If $m=0$, then, from Remark
\ref{rem-ajut},
$$
\overline{{\tau}_{0,\beta}}\left(\widehat{f_{\sigma\gamma}^{\mu}}\right)
= \left[ \frac{\partial f_{\sigma\gamma}^{\mu}}{\partial
\overline{t_{0}^{\nu}}} \right]^{\wedge} \overline{M_{\beta}^{\nu}}
$$
and
\begin{eqnarray*}
{}^{\hat{D}}\! R_{\tau_{0,\alpha},
\overline{\tau_{0,\beta}}}\left( T^{r}(\tau_{0,\gamma})\right) & = &  -
M^{\sigma}_{\alpha} \overline{M^{\nu}_{\beta}}  \left[
\frac{\partial f^{\mu}_{\sigma\gamma}}{\partial
\overline{t_{0}^{\nu}}}\right]^{\wedge} T^{r}(\tau_{0,\mu})\\
& = &
M^{\sigma}_{\alpha}\overline{ M^{\nu}_{\beta}} T^{r} \left( \left[
{}^{D}\! R_{\tau_{0,\sigma}, \overline{\tau_{0,\nu}}}
(\tau_{0,\gamma})\right]^{\wedge}\right)\,,
\end{eqnarray*}
which implies  (\ref{curv1}).

2) If $m>0$, the right hand side of (\ref{curv-m}) is zero, because
$\widehat{f^{s}_{\sigma\gamma}}$ is annihilated by
$\overline{T^{m} (\tau_{0,\beta})}$ (again from Remark
\ref{rem-ajut}).

\end{proof}

One of the fundamental properties of the pseudo-Hermitian metric
$h$ is that it must be compatible with the holomorphic metric
$\eta\,,$ namely $D\eta =0\,$ \cite{cecotti,dubrovin3}\,. It will
turn out that the lifted metrics satisfy this same condition on
the big phase space (see Section \ref{extended-sect}).

\section{The lifted Higgs field}\label{higgs-str}

In this section we lift the
Higgs field from the small phase space to
the big phase space. Unlike other types of tensor fields
involved in our constructions,
the Higgs field on the big phase space is not obtained
via the general lifting procedure developed in Section
\ref{general}. One motivation for its definition is explained
in Remark \ref{ian-notes}.

\begin{defn}\label{def-hat-c}
Let $C_{X}Y = X\bullet Y$ be the Higgs field on $M$.
The Higgs field $\hat{C}$ on
$M^{\infty}$ is defined by
\begin{equation}\label{higgs-hat-1}
\hat{C}_{\tau_{0,\alpha} }\left( T^{n} (\tau_{0,\beta})\right) =
M^{\sigma}_{\alpha}T^{n}\left( \left[ C_{\tau_{0,\sigma}}
\left(  \tau_{0,\beta}\right) \right]^{\wedge}\right)
\end{equation}
and
\begin{equation}
\hat{C}_{T^{m}(\tau_{0,\alpha})}\left(
T^{n}(\tau_{0,\beta})\right) =0,
\end{equation}
for any  $n\geq 0$, $m\geq 1$ and $1\leq \alpha , \beta \leq N.$

\end{defn}

We make some comments on the above definition.

\begin{rem}\label{ian-notes}{\rm
The Higgs field $\hat{C}$, restricted to primary vector fields,
gives the quantum product. This fact relies on a property proved
in \cite{DijkgraafWitten}, namely that $2$-point functions
$<<\tau_{0,\alpha}\tau_{0,\beta}>>_{{}_0}$  on $M^{\infty}$ are
natural lifts of their restrictions to $M$. More precisely,
to prove that $\hat{C}_{\tau_{0,\alpha}}(\tau_{0,\beta})=
\tau_{0,\alpha}\circ\tau_{0,\beta}$, we notice that
\begin{align*}
&<<\tau_{0,\alpha}\tau_{0,\beta}\tau_{0,\gamma}>>_{{}_0}  = \tau_{0,\gamma}
\left( <<\tau_{0,\alpha}\tau_{0,\beta}>>_{{}_0}\right)\\
&= \tau_{0,\gamma}\left( <<\tau_{0,\alpha}\tau_{0,\beta}>>_{0}\vert_{M}\right)^{\wedge}
 =  \left( \frac{\partial}{\partial t_{0}^{\sigma}}<< \tau_{0,\alpha}
\tau_{0,\beta}>>_{{}_0}\vert_{M}\right)^{\wedge}
M^{\sigma}_{\gamma},\\
\end{align*}
for any $1\leq \alpha , \beta ,\gamma\leq N$. We obtain:
\begin{equation}\label{3-point}
<<\tau_{0,\alpha}\tau_{0,\beta}\tau_{0,\gamma}>>_{{}_0}=
\left( << \tau_{0,\alpha}
\tau_{0,\beta}\tau_{0,\sigma}>>_{{}_0}\vert_{M}\right)^{\wedge}
M^{\sigma}_{\gamma} .
\end{equation}
In particular, the right hand side of (\ref{3-point}) is symmetric
in $\alpha$ and $\gamma $ and thus
$$
<< \tau_{0,\alpha}
\tau_{0,\beta}\tau_{0,\gamma}>>_{{}_0} = \left( << \tau_{0,\gamma}
\tau_{0,\beta}\tau_{0,\sigma}>>_{{}_0}\vert_{M}\right)^{\wedge}
M^{\sigma}_{\alpha}.
$$
From the above relation we then obtain
\begin{align*}
&\tau_{0,\alpha}\circ\tau_{0,\beta}=
<< \tau_{0,\alpha}\tau_{0,\beta}\tau_{0,\gamma} >>_{{}_0} \eta^{\gamma\nu}
\tau_{0,\nu}\\
& = M^{\sigma}_{\alpha} \left(
<<\tau_{0,\gamma}\tau_{0,\beta}\tau_{0,\sigma}>>_{{}_0}\vert_{M}
\eta^{\gamma\nu}\tau_{0,\nu}\right)^{\wedge}\\
&= M^{\sigma}_{\alpha}\left[
C_{\tau_{0,\sigma}}(\tau_{0,\beta})\right]^{\wedge} =
\hat{C}_{\tau_{0,\alpha}}(\tau_{0,\beta}) ,
\end{align*}
as claimed. This is represented in Figure 1.

\setlength{\unitlength}{0.8mm}
\begin{picture}(100,120)
\qbezier(10,10)(15,50)(50,100)

\put(10,10){\line(4,3){20}}
\put(10,10){\line(1,0){60}}
\put(30,25){\line(1,0){60}}
\put(70,10){\line(4,3){20}}

\put(37,80){\line(4,3){20}}
\put(37,80){\line(1,0){60}}
\put(57,95){\line(1,0){60}}
\put(97,80){\line(4,3){20}}

\put(5,5){$M$}
\put(40,100){$M^\infty$}
\put(90,20){$T_pM$}
\put(120,90){$T_pM^\infty$}

\put(45,17){\vector(-1,0){7}}
\put(32,16){$X$}

\put(72,87){\vector(-1,0){7}}
\put(59,86){$X$}

\put(45,17){\vector(1,1){5}}
\put(51,20){$Y$}

\put(45,30){\vector(0,1){45}}

\put(72,87){\vector(1,1){5}}
\put(78,90){$Y$}

\put(45,17){\vector(4,-1){10}}
\put(55,12){$X\bullet Y$}

\put(72,87){\vector(4,-1){10}}
\put(82,82){$X\circ Y$}

\put(50,50){$\tau_{0,\alpha}\circ\tau_{0,\beta}=M^{\sigma}_{\alpha}\left[
{\tau_{0,\sigma}}\bullet\tau_{0,\beta}\right]^{\wedge}$}

\put(20,-5){Figure~1:~The~lifting~of~the~multiplication~from~$M$~to~$M^\infty\,.$}

\end{picture}

}

\end{rem}

\bigskip

\bigskip

\begin{rem}{\rm Note also that $\hat{C}$ mirrors
the properties of the Chern connection $\hat{D}$ given in Lemma
\ref{chern}. Thus, each term in the second $tt^{*}$-equation
\[
{}^{\hat{D}} \! R_{Z_{1},\bar{Z}_{2}} + [\hat{C}_{Z_{1}},
\hat{C}^{\dagger}_{\bar{Z}_{2}}]=0,
\]
behaves in the same manner. Owing to this, the second
$tt^{*}$-equation is inherited from $M$ to $M^{\infty}$ (we shall
give details of this fact in Section \ref{harmonic-sect}). This
would not occur if Liu's multiplication $\diamond$ or the quantum
product $\circ$ were used, instead of $\hat{C}$.}
\end{rem}

\begin{lem} The metric $\hat{\eta}$ is invariant with respect to
$\hat{C}$, i.e.
\begin{equation}\label{verificare}
\hat{\eta} (\hat{C}_{\mathcal W_{1}} (\mathcal W_{2}), {\mathcal
W}_{3})=\hat{\eta} ({\mathcal W}_{2} , \hat{C}_{\mathcal
W_{1}}({\mathcal W}_{3})),
\end{equation}
for any vector fields ${\mathcal W}_{1}, {\mathcal W}_{2},
{\mathcal W}_{3}\in {\mathcal T}^{1,0}_{M^{\infty}}.$
\end{lem}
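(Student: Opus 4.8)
The plan is to exploit tensoriality. Both sides of (\ref{verificare}) are $C^{\infty}(M^{\infty})$-linear in each of ${\mathcal W}_{1},{\mathcal W}_{2},{\mathcal W}_{3}$, since $\hat{C}$ is an $\mathrm{End}(T^{1,0}M^{\infty})$-valued $(1,0)$-form and $\hat{\eta}$ is $\mathbb{C}$-bilinear. Hence it suffices to verify the identity when each ${\mathcal W}_{i}$ is a member of the frame $\{T^{n}(\tau_{0,\alpha})\}$ of $T^{1,0}M^{\infty}$ supplied by Lemma \ref{cr}. Accordingly I would set ${\mathcal W}_{1}=T^{m}(\tau_{0,\alpha})$, ${\mathcal W}_{2}=T^{n}(\tau_{0,\beta})$, ${\mathcal W}_{3}=T^{p}(\tau_{0,\gamma})$. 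If $m\geq 1$ there is nothing to do: by Definition \ref{def-hat-c} the endomorphism $\hat{C}_{T^{m}(\tau_{0,\alpha})}$ kills every frame vector $T^{n}(\tau_{0,\beta})$, so $\hat{C}_{{\mathcal W}_{1}}=0$ and both sides of (\ref{verificare}) vanish. Thus the only case to treat is $m=0$, i.e. ${\mathcal W}_{1}=\tau_{0,\alpha}$.

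For that case I would compute each side directly from (\ref{higgs-hat-1}) together with the $\eta$-analogue of (\ref{hat-h}), namely $\hat{\eta}(T^{n}(\hat{X}),T^{p}(\hat{Y}))=\delta_{np}[\eta(X,Y)]^{\wedge}$, which holds because $\hat{\eta}$ is the natural lift of $\eta$ (see (\ref{hat-eta}) and (\ref{more-general})), and noting that $\tau_{0,\gamma}$, having constant coefficients, equals its own natural lift $\widehat{\tau_{0,\gamma}}$. Pulling the scalar functions $M^{\sigma}_{\alpha}$ out of $\hat{\eta}$ one obtains
\[
\hat{\eta}(\hat{C}_{\tau_{0,\alpha}}T^{n}(\tau_{0,\beta}),\,T^{p}(\tau_{0,\gamma}))=\delta_{np}\,M^{\sigma}_{\alpha}\,[\eta(C_{\tau_{0,\sigma}}\tau_{0,\beta},\tau_{0,\gamma})]^{\wedge}
\]
and likewise
\[
\hat{\eta}(T^{n}(\tau_{0,\beta}),\,\hat{C}_{\tau_{0,\alpha}}T^{p}(\tau_{0,\gamma}))=\delta_{np}\,M^{\sigma}_{\alpha}\,[\eta(\tau_{0,\beta},C_{\tau_{0,\sigma}}\tau_{0,\gamma})]^{\wedge}.
\]
These two right-hand sides coincide because $C^{*}=C$ on $M$, that is, $\eta(C_{X}Y,Z)=\eta(Y,C_{X}Z)$ — the $\eta$-self-adjointness of the Higgs field, which is part of the Saito structure of the small phase space (Definition \ref{saito}). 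This establishes (\ref{verificare}).

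I do not expect a genuine obstacle: the argument is purely formal. The only points needing a little care are the reduction to the frame $\{T^{n}(\tau_{0,\alpha})\}$ — legitimate precisely because $\hat{C}$ and $\hat{\eta}$ are tensorial — and the observation that the equality of the two sides on $M^{\infty}$ is, after unwinding the natural-lift formulas, nothing but the symmetry $\eta(C_{X}Y,Z)=\eta(Y,C_{X}Z)$ already available on $M$. In other words, $\hat{\eta}$-invariance of $\hat{C}$ is inherited verbatim from $\eta$-invariance of $C$.
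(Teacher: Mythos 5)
Your proof is correct and follows essentially the same route as the paper: reduce to the frame $\{T^{n}(\tau_{0,\alpha})\}$, dispose of the case ${\mathcal W}_{1}\in\mathrm{Im}(T)$ using $\hat{C}_{T^{m}(\tau_{0,\alpha})}=0$, and for ${\mathcal W}_{1}=\tau_{0,\alpha}$ reduce the identity via (\ref{higgs-hat-1}) and (\ref{hat-eta}) to the $\eta$-self-adjointness of $C$ on the small phase space. The paper phrases the last step as symmetry of $\delta_{nm}M^{\sigma}_{\alpha}[\eta(C_{\tau_{0,\sigma}}\tau_{0,\beta},\tau_{0,\gamma})]^{\wedge}$ in the pairs $(n,\beta)$ and $(m,\gamma)$, which is the same fact you invoke.
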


\begin{proof} From
the definition of $\hat{C}$ both sides of (\ref{verificare}) are
zero, when ${\mathcal W}_{1}\in \mathrm{Im}(T).$ When $\mathcal
W_{1}= \tau_{0,\alpha}$,
$$
\hat{\eta} (\hat{C}_{\tau_{0,\alpha}} T^{n}(\tau_{0,\beta}),
T^{m}(\tau_{0,\gamma})) = \delta_{nm} M^{\sigma}_{\alpha} \left[
\eta\left( C_{\tau_{0,\sigma}}(\tau_{0,\beta}),
\tau_{0,\gamma}\right)\right]^{\wedge}
$$
which is symmetric in the pairs $(n,\beta )$ and $(m,\gamma )$, because
$\eta$ is symmetric and  invariant with respect to $C$.
\end{proof}

\section{The lifted $tt^{*}$-bundles}\label{extended-str}

In this section we consider various compatibility conditions (see
Section \ref{frob})  on the structures on the small phase
space and we show that they are inherited by the lifted
structures on the big phase space. We preserve the notation from
the previous sections. Thus, the small phase space $M$ has a
metric $\eta$ (the Poincar\'e pairing), a multiplication $\bullet$ (with
Higgs field $C$) and a real structure $k$ which is compatible with $\eta$.
Recall that $k$ compatible with $\eta$ means that $h:= \eta (\cdot ,
k\cdot )$ is a pseudo-Hermitian metric. We denote by $D$ the Chern
connection of $h$.
Since $M$ is a Frobenius manifold, $(\nabla , \eta , C,
R_{0}, R_{\infty})$ is a Saito structure on $T^{1,0}M$, where
$\nabla$ is the Levi-Civita connection of $\eta$ (which is flat
and $\nabla (\tau_{0,\alpha})=0$, for any $1\leq \alpha \leq N$),
$R_{0}=C_{E}$ is the multiplication by the Euler field
and $R_{\infty}=\nabla E.$

The lifted structures were defined in the previous sections and
will be denoted as before, $\hat{\eta}$, $\hat{C}$, $\hat{k}$ and
$\hat{h}$ (with Chern connection $\hat{D}$). We shall also
consider the natural lifts $\hat{R}_{0}$ and
$\hat{R}_{\infty}$, which are endomorphisms on
$T^{1,0}M^{\infty}.$  Finally, we
need to define a flat connection $\hat{\nabla}$ on the bundle
$T^{1,0}M^{\infty}$. The connection $\hat{\nabla}$ will be part of
a Saito structure on the big phase space and is defined as
follows.

\begin{defn} The connection $\hat{\nabla}$ on $T^{1,0}M^{\infty}$
is defined by the condition that all vector fields
$T^{m}(\tau_{0,\alpha })$, ($m\geq 0,\alpha = 1,\cdots , N$) are
$\hat{\nabla}$-parallel.
\end{defn}

Note that the $\hat{\nabla}$-covariant derivatives of vector
fields $T^{m}(\hat{X})$, in directions from the image of $T$,
vanish, i.e.
\begin{equation}\label{nabla-t}
\hat{\nabla}_{T({\mathcal W})}\left(T^{m}(\hat{X})\right) =0,\quad
\forall\mathcal W\in {\mathcal T}_{M^{\infty}},\quad X\in
{\mathcal T}_{M}, \quad m\geq 0.
\end{equation}
This follows from the above definition and Lemma \ref{ian}.
Note also that the two connections
$\hat{\nabla}$ and $\nabla$
(the latter defined in Lemma \ref{cr})  are connected by the relation
\[
\left(
{\hat{\nabla}}_{T^n(\tau_{0,\alpha})} -
{{\nabla}}_{T^n(\tau_{0,\alpha})}\right)\left( T^m(\tau_{0,\beta})\right)
= \delta_{n,0} T^{m-1} ( \tau_{0,\alpha} \circ \tau_{0,\beta}) ,
\]
though we will not have to use this result.

Since the various notions from $tt^{*}$-geometry are built as an
enrichment of the notion of Saito bundle, we begin by proving that
the lifted data provides a Saito structure on the big phase
space.

\subsection{Lifted Saito bundles}\label{subsect-saito}

\begin{thm}\label{thm-saito} The data $(\hat{\nabla}, \hat{\eta}, \hat{C},
\hat{R}_{0}, \hat{R}_{\infty})$ is a Saito structure on
$T^{1,0}M^{\infty}$.
\end{thm}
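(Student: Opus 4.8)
The plan is to verify, one by one, the defining conditions of a Saito bundle (Definition \ref{saito}) for the lifted data $(\hat\nabla, \hat\eta, \hat C, \hat R_0, \hat R_\infty)$, exploiting systematically the fact that the frame $\{T^n(\tau_{0,\alpha})\}$ is $\hat\nabla$-parallel and that every object involved has a ``componentwise in $n$'' structure dictated by the natural lift. The key technical device throughout is Lemma \ref{ian} (and Remark \ref{rem-ajut}): any function lifted from $M$ is annihilated by vector fields in $\mathrm{Im}(T)$, together with the commutator relations for $T^n(\gamma_\alpha)$ from Lemma \ref{cr}. Since both $\hat\nabla$ and $\hat C$ vanish identically in directions from $\mathrm{Im}(T)$, most identities need only be checked on the primary directions $\tau_{0,\alpha}$, and there they reduce — via the recurring Jacobian factor $M^\sigma_\alpha$ and the relation $M^\sigma_\alpha|_M = \delta^\sigma_\alpha$ — to the corresponding identities for the original Saito structure $(\nabla,\eta,C,R_0,R_\infty)$ on $M$.

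\textbf{Order of steps.} First I would record $\hat\nabla\hat\eta = 0$: since $\hat\eta(T^n(\tau_{0,\alpha}), T^m(\tau_{0,\beta})) = \delta_{mn}\eta_{\alpha\beta}$ is constant and the frame is $\hat\nabla$-parallel, this is immediate. Next, flatness $R^{\hat\nabla} = 0$: again because $\hat\nabla$ has a global parallel frame, its curvature vanishes. Then I would check $\hat C\wedge\hat C = 0$ and $\hat C^* = \hat C$ (symmetry with respect to $\hat\eta$): the latter is exactly the Lemma just proved before the theorem statement; the former follows on primary directions from $M^\sigma_\alpha M^\tau_\beta[C_{\tau_{0,\sigma}}, C_{\tau_{0,\tau}}]^\wedge$ being symmetric in $(\alpha,\beta)$ because $[C,C]=0$ on $M$ and $M^\sigma_\alpha$ are just functions, and trivially when an argument lies in $\mathrm{Im}(T)$. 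For $d^{\hat\nabla}\hat C = 0$ I would evaluate $(d^{\hat\nabla}\hat C)_{\tau_{0,\alpha},\tau_{0,\beta}}$ on $T^n(\tau_{0,\gamma})$: the terms $\hat\nabla_{\tau_{0,\alpha}}(\hat C_{\tau_{0,\beta}} T^n(\tau_{0,\gamma}))$ produce a derivative $\tau_{0,\alpha}(M^\sigma_\beta)$ times $T^n([C_{\tau_{0,\sigma}}\tau_{0,\gamma}]^\wedge)$ plus $M^\sigma_\beta M^\mu_\alpha$ times $T^n([\,\cdot\,]^\wedge)$ of the $\nabla$-derivative term, while $\hat C_{[\tau_{0,\alpha},\tau_{0,\beta}]} = 0$ since $[\tau_{0,\alpha},\tau_{0,\beta}]=0$; using Lemma \ref{deriv-M} (symmetry of $T^q(\tau_{0,\beta})(M^\sigma_\alpha)$ in $\alpha,\beta$, which is really the WDVV/associativity input) the obstruction collapses to $M^\mu_\alpha M^\sigma_\beta [(d^\nabla C)_{\tau_{0,\mu},\tau_{0,\sigma}}]^\wedge = 0$. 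The directions in $\mathrm{Im}(T)$ are handled separately and vanish because $\hat\nabla$ and $\hat C$ both kill them.

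\textbf{The remaining identities and the main obstacle.} For the endomorphism conditions I would use that $\hat R_0 = \hat C_{\hat E}$ (one should check the natural lift of $C_E$ agrees with $\hat C$ contracted against the lift of the Euler field, or simply take $\hat R_0$ as the natural lift and verify $[\hat R_0, \hat C] = 0$ directly from $[R_0,C]=0$ on $M$, componentwise in $n$), that $\hat R_0^* = \hat R_0$ and $\hat R_\infty^* + \hat R_\infty = -w\,\mathrm{Id}$ are natural-lift consequences of the same identities on $M$ (adjoints are taken with respect to the natural lift $\hat\eta$, which is compatible with lifts), that $\hat\nabla\hat R_\infty = 0$ follows since $\hat R_\infty$ is a natural lift — its matrix in the parallel frame is constant in the sense that it involves only lifted functions annihilated by $\mathrm{Im}(T)$ and reproduces $\nabla R_\infty = 0$ on primary directions — and finally $\hat\nabla\hat R_0 + \hat C = [\hat C, \hat R_\infty]$, which is the delicate one: it again splits into $\mathrm{Im}(T)$-directions (both sides zero) and primary directions, where after extracting the Jacobian factors and applying Lemma \ref{ian} to the $M^\sigma_\alpha$-derivatives it should reduce to $M^\sigma_\alpha[(\nabla R_0 + C - [C,R_\infty])_{\tau_{0,\sigma}}]^\wedge = 0$. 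The main obstacle is precisely this bookkeeping for $d^{\hat\nabla}\hat C = 0$ and $\hat\nabla\hat R_0 + \hat C = [\hat C,\hat R_\infty]$: one must carefully track the extra term coming from $T^n \mapsto T^{n-1}$ shifts hidden in how $\hat\nabla$ differs from $\nabla$ (the displayed relation just before the theorem), confirm these shift-terms cancel against the non-parallelism of the $T^n$-frame under $\nabla$, and use Lemma \ref{deriv-M} at exactly the right moment so that only the second-order derivatives surviving are the symmetric ones encoding associativity on $M$.
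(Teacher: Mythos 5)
Your proposal is correct and follows essentially the same route as the paper: the only conditions requiring real work are $d^{\hat\nabla}\hat C=0$ and $\hat\nabla\hat R_0+\hat C=[\hat C,\hat R_\infty]$ (the paper's Lemmas \ref{lem-1} and \ref{lem-2}; the rest are dismissed as easy), and both are reduced, exactly as you describe, to the corresponding identities on $M$ via the factors $M^\sigma_\alpha$ together with Lemmas \ref{ian}, \ref{cr} and \ref{deriv-M}. One caution: in the mixed case $(\tau_{0,\alpha},T(\tau_{0,\beta}))$ of $d^{\hat\nabla}\hat C=0$ the terms do \emph{not} individually vanish as your middle paragraph suggests — $T(\tau_{0,\beta})(M^\sigma_\alpha)$ is nonzero and must cancel against $\hat C_{[\tau_{0,\alpha},T(\tau_{0,\beta})]}=-\hat C_{\tau_{0,\alpha}\circ\tau_{0,\beta}}$ via Lemma \ref{deriv-M}, which is precisely the cancellation your final paragraph anticipates (though it is driven by the commutator relation of Lemma \ref{cr}, not by the displayed $\hat\nabla$-versus-$\nabla$ formula, which the paper explicitly never uses).
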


To prove Theorem \ref{thm-saito}, we need to check that
$(\hat{\nabla}, \hat{\eta}, \hat{C},\hat{R}_{0}, \hat{R}_{\infty})$
satisfies all defining conditions for Saito structures
(see Definition \ref{saito}). In particular, in Lemma \ref{lem-1} we show that the potentiality condition $d^{\hat{\nabla}}\hat{C}=0$
holds and in Lemma \ref{lem-2} we show that the relation
$$
\hat{\nabla}(\hat{R}_{0})+\hat{C} = [\hat{C}, \hat{R}_{\infty}]
$$
holds. The remaining conditions
from Definition \ref{saito} can be checked easily,
and we omit the proofs.

\begin{lem}\label{lem-1}
The  equality $d^{\hat{\nabla}}\hat{C}=0$ holds.
\end{lem}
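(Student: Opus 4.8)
The plan is to verify $d^{\hat\nabla}\hat C=0$ by testing the $\mathrm{End}(V)$-valued $2$-form $(d^{\hat\nabla}\hat C)_{\mathcal W_1,\mathcal W_2}$ on the global frame $\{T^m(\tau_{0,\gamma})\}$ and using the frame $\{T^n(\tau_{0,\alpha})\}$ for the direction arguments $\mathcal W_1,\mathcal W_2$ as well. By bilinearity and the standard formula
\[
(d^{\hat\nabla}\hat C)_{\mathcal W_1,\mathcal W_2}
= \hat\nabla_{\mathcal W_1}(\hat C_{\mathcal W_2})
- \hat\nabla_{\mathcal W_2}(\hat C_{\mathcal W_1})
- \hat C_{[\mathcal W_1,\mathcal W_2]},
\]
it suffices to treat the cases determined by how many of $\mathcal W_1,\mathcal W_2$ lie in $\mathrm{Im}(T)$.

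First I would dispose of the case where at least one of the two direction vectors, say $\mathcal W_1=T(\mathcal W)$, lies in $\mathrm{Im}(T)$. Then $\hat C_{\mathcal W_1}=0$ by Definition \ref{def-hat-c}, so the first and third terms on the right vanish; for the middle term one uses that $\hat\nabla_{T(\mathcal W)}$ annihilates any $T^m(\hat X)$ by (\ref{nabla-t}), so $\hat\nabla_{T(\mathcal W)}(\hat C_{\mathcal W_2})$ also vanishes on the frame (since $\hat C_{\mathcal W_2}$ applied to a frame vector is of the form $T^m(\hat Z)$). One must also check the bracket $[\mathcal W_1,\mathcal W_2]$ stays controlled, but since $\hat C$ vanishes on $\mathrm{Im}(T)$ and the commutators in Lemma \ref{cr} only ever produce terms in $\mathrm{Im}(T)$ or primary terms whose $\hat C$-action we handle separately, the remaining contribution reduces to the primary case. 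So the essential content is the case $\mathcal W_1=\tau_{0,\alpha}$, $\mathcal W_2=\tau_{0,\beta}$, evaluated on $T^n(\tau_{0,\gamma})$.

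In that remaining case I would compute both $\hat\nabla_{\tau_{0,\alpha}}\bigl(\hat C_{\tau_{0,\beta}}(T^n(\tau_{0,\gamma}))\bigr)$ and the analogous term with $\alpha,\beta$ swapped, together with $\hat C_{[\tau_{0,\alpha},\tau_{0,\beta}]}(T^n(\tau_{0,\gamma}))$. Using (\ref{higgs-hat-1}) the first is
\[
\hat\nabla_{\tau_{0,\alpha}}\Bigl(M^\sigma_\beta\, T^n\bigl([C_{\tau_{0,\sigma}}\tau_{0,\gamma}]^{\wedge}\bigr)\Bigr)
= \tau_{0,\alpha}(M^\sigma_\beta)\, T^n\bigl([C_{\tau_{0,\sigma}}\tau_{0,\gamma}]^{\wedge}\bigr)
+ M^\sigma_\beta\, \hat\nabla_{\tau_{0,\alpha}}\bigl(T^n([C_{\tau_{0,\sigma}}\tau_{0,\gamma}]^{\wedge})\bigr),
\]
and here I expand $[C_{\tau_{0,\sigma}}\tau_{0,\gamma}]^{\wedge}$ in the $\tau_{0,\mu}$-frame, apply $\hat\nabla_{\tau_{0,\alpha}}$ (which acts on the lifted coefficient functions via $\tau_{0,\alpha}(\widehat g)=[\partial g/\partial t_0^\rho]^{\wedge}M^\rho_\alpha$, from (\ref{deriv-new})), and keep track of the descendent correction $-T^{n-1}(\tau_{0,\alpha}\circ\cdots)$ coming from $\hat\nabla_{\tau_{0,\alpha}}T^n$ versus $\nabla_{\tau_{0,\alpha}}T^n$. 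The $\tau_{0,\alpha}(M^\sigma_\beta)$ terms are symmetric in $\alpha\leftrightarrow\beta$ by the symmetry of $T^q(\tau_{0,\beta})(M^\sigma_\alpha)$ noted after Lemma \ref{deriv-M}, so they cancel in the antisymmetrization; the $M^\sigma_\beta M^\rho_\alpha[\partial(\cdots)/\partial t_0^\rho]^{\wedge}$ terms together with the bracket term $\hat C_{[\tau_{0,\alpha},\tau_{0,\beta}]}$ (recall $[\tau_{0,\alpha},\tau_{0,\beta}]$ has no descendent part in flat coordinates but the Higgs fields still enter through $\nabla$) reassemble, after using $M^\rho_\alpha M^\sigma_\beta = M^\sigma_\beta M^\rho_\alpha$ and the Getzler-type identity (\ref{3-point}) that makes the whole expression symmetric in the pair $(\alpha,\beta)$, into precisely the pullback of $(d^\nabla C)_{\tau_{0,\rho},\tau_{0,\sigma}}$ contracted with $M$'s — which vanishes because $d^\nabla C=0$ on $M$ (the potentiality of the Saito structure coming from the Frobenius manifold).

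The main obstacle will be the bookkeeping in that last case: one has to correctly combine the derivative of the $M$-factors, the descendent-shift discrepancy between $\hat\nabla$ and $\nabla$ applied to $T^n$, and the $\hat C_{[\tau_{0,\alpha},\tau_{0,\beta}]}$ term so that everything collapses to $M^\rho_\alpha M^\sigma_\beta$ times the lift of $(d^\nabla C)_{\tau_{0,\rho},\tau_{0,\sigma}}$. The cleanest route is probably to first prove the auxiliary identity that $\hat\nabla_{\hat X}\hat C$, restricted to primary directions and evaluated on $T^n(\hat Y)$, equals $M$-contracted with $T^n\bigl([\nabla_X C]^{\wedge}\bigr)$ modulo a symmetric-in-$X$ descendent term, so that antisymmetrizing in the two primary directions kills the ambiguous pieces automatically and leaves exactly $[\,(d^\nabla C)_{X,Y}\,]^{\wedge}$-type terms contracted with the symmetric $M M$ tensor; then $d^\nabla C=0$ finishes it.
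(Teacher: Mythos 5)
Your decomposition into cases and your treatment of the purely primary case $\mathcal W_1=\tau_{0,\alpha}$, $\mathcal W_2=\tau_{0,\beta}$ follow the paper's proof closely: there the $\tau_{0,\alpha}(M^\sigma_\beta)$ contributions drop out by the symmetry of $\eta^{\nu\mu}<<\tau_{0,1}\tau_{0,\mu}\tau_{0,\beta}\tau_{0,\alpha}>>_{{}_0}$ in $\alpha,\beta$, and the rest collapses because $\partial c^\nu_{\mu\gamma}/\partial t^\delta_0$ is symmetric in $\mu,\delta$ while $M^\delta_\alpha M^\mu_\beta-M^\delta_\beta M^\mu_\alpha$ is skew. (One small point: since the frame $T^m(\tau_{0,\gamma})$ is $\hat\nabla$-parallel by definition, there is no descendent-shift correction in $\hat\nabla_{\tau_{0,\alpha}}T^n(\cdots)$; you are importing the shift formula for the other connection $\nabla$ of Lemma \ref{cr}, which is not the one used here. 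Also $[\tau_{0,\alpha},\tau_{0,\beta}]=0$ outright, so the bracket term is absent in this case.)

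The genuine gap is in the mixed case $\mathcal W_1=\tau_{0,\alpha}$, $\mathcal W_2=T(\tau_{0,\beta})$. You assert that $\hat\nabla_{T(\mathcal W)}(\hat C_{\mathcal W_2})$ vanishes on the frame and that the bracket term is harmless, but neither claim is true term by term. First, $\hat C_{\tau_{0,\alpha}}(T^m(\tau_{0,\gamma}))=M^\sigma_\alpha T^m([C_{\tau_{0,\sigma}}\tau_{0,\gamma}]^{\wedge})$, and while $\hat\nabla_{T(\tau_{0,\beta})}$ kills the vector $T^m([C_{\tau_{0,\sigma}}\tau_{0,\gamma}]^{\wedge})$ by (\ref{nabla-t}), it does \emph{not} kill the coefficient: by Lemma \ref{deriv-M}, $T(\tau_{0,\beta})(M^\sigma_\alpha)=(\tau_{0,\alpha}\circ\tau_{0,\beta})(\widehat{f^\sigma})\neq 0$ (only $T^q$ with $q\geq 2$ annihilates $M^\sigma_\alpha$). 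Second, $[\tau_{0,\alpha},T(\tau_{0,\beta})]=-\tau_{0,\alpha}\circ\tau_{0,\beta}$ is a primary field, so $\hat C_{[\tau_{0,\alpha},T(\tau_{0,\beta})]}$ is genuinely nonzero and does not "reduce to the primary case". The actual content of this case is that these two nonvanishing terms cancel each other, via the identity (\ref{int}), i.e.\ $\hat C_{\mathcal W}(T^m(\tau_{0,\gamma}))={\mathcal W}(\widehat{f^\sigma})\,T^m([C_{\tau_{0,\sigma}}\tau_{0,\gamma}]^{\wedge})$ for primary $\mathcal W$, matched against $T(\tau_{0,\beta})(M^\sigma_\alpha)=(\tau_{0,\alpha}\circ\tau_{0,\beta})(\widehat{f^\sigma})$. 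Without this cancellation argument the mixed case is not established, and your outline as written asserts the vanishing of quantities that are individually nonzero.
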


\begin{proof}
From the various definitions it follows that
\begin{align*}
&(d^{\hat{\nabla}} \hat{C})_{T^{p}(\tau_{0,\alpha}),
T^{q}(\tau_{0,\beta})} \left(T^{m}(\tau_{0,\gamma}) \right) \\&=
\hat{\nabla}_{T^{p}(\tau_{0,\alpha})}\left(
\hat{C}_{T^{q}(\tau_{0,\beta})}\right) \left(T^{m}(\tau_{0,\gamma})\right) - \hat{\nabla}_{T^{q}(\tau_{0,\beta})}\left(
\hat{C}_{T^{p}(\tau_{0,\alpha})}\right)
\left(T^{m}(\tau_{0,\gamma})\right)\\& \phantom{=}-
\hat{C}_{[T^{p}(\tau_{0,\alpha}),
T^{q}(\tau_{0,\beta})]}\left(T^{m}(\tau_{0,\gamma})\right) .\\
\end{align*}
When $p,q\geq 1$, both $\hat{C}_{T^{p}(\tau_{0,\gamma})}$ and
$\hat{C}_{T^{q}(\tau_{0,\beta})}$ are trivial, and from Lemma \ref{cr},
$[T^{p}(\tau_{0,\alpha}), T^{q}(\tau_{0,\beta})]=0\,.$ Hence

$$
(d^{\hat{\nabla}} \hat{C})_{T^{p}(\tau_{0,\alpha}),
T^{q}(\tau_{0,\beta})} \left(T^{m}(\tau_{0,\gamma})\right) = 0.
$$
When $p=q=0$ we obtain
\begin{align*}
&(d^{\hat{\nabla}}\hat{C})_{\tau_{0,\alpha},
\tau_{0,\beta}}T^{m}(\tau_{0,\gamma})\\
&=\hat{\nabla}_{\tau_{0,\alpha}} \left( \hat{C}_{\tau_{0,\beta}}
T^{m}(\tau_{0,\gamma})\right) -
\hat{\nabla}_{\tau_{0,\beta}} \left( \hat{C}_{\tau_{0,\alpha}}
T^{m}(\tau_{0,\gamma})\right)\,,\\
&= \hat{\nabla}_{\tau_{0,\alpha}}\left( M_{\beta}^{\nu} T^{m} \left[
C_{\tau_{0,\nu}}(\tau_{0,\gamma})\right]^{\wedge}\right) -
\hat{\nabla}_{\tau_{0,\beta}}\left( M_{\alpha}^{\nu} T^{m} \left[
C_{\tau_{0,\nu}}(\tau_{0,\gamma})\right]^{\wedge}\right)\,,\\
& = \phantom{-}\frac{\partial M_{\beta}^{\nu}}{\partial t_{0}^{\alpha}}
T^{m}\left(\left[
C_{\tau_{0,\nu}}(\tau_{0,\gamma})\right]^{\wedge}\right) +
M_{\beta}^{\nu} \hat{\nabla}_{\tau_{0,\alpha}}\left( T^{m}
\left[ C_{\tau_{0,\nu}}(\tau_{0,\gamma})\right]^{\wedge}\right) \,,\\
&\phantom{=} - \frac{\partial M_{\alpha}^{\nu}}{\partial t_{0}^{\beta}}
T^{m}\left(\left[
C_{\tau_{0,\nu}}(\tau_{0,\gamma})\right]^{\wedge}\right) -
M_{\alpha}^{\nu} \hat{\nabla}_{\tau_{0,\beta}}\left( T^{m}
\left[ C_{\tau_{0,\nu}}(\tau_{0,\gamma})\right]^{\wedge}\right) ,\\
\end{align*}
where we used the $\hat{\nabla}$-flatness of
$T^{m}(\tau_{0,\gamma})$ and the definition of $\hat{C}.$
From
$$
M_{\beta}^{\nu} = \eta^{\nu\mu} <<
\tau_{0,1}\tau_{0,\mu}\tau_{0,\beta}>>_{{}_0}
$$
and relation (\ref{deriv-tensor}),
\begin{equation}
\frac{\partial M_{\beta}^{\nu}}{\partial t_{0}^{\alpha}} = \eta^{\nu\mu}
<< \tau_{0,1}\tau_{0,\mu}\tau_{0,\beta}\tau_{0,\alpha}>>_{{}_0},
\end{equation}
which is symmetric in $\alpha$ and $\beta .$
Thus,
$$
(d^{\hat{\nabla}}\hat{C})_{\tau_{0,\alpha},
\tau_{0,\beta}}\left(T^{m}(\tau_{0,\gamma})\right)= M_{\beta}^{\nu}
\hat{\nabla}_{\tau_{0,\alpha}}\left( T^{m} \left[
C_{\tau_{0,\nu}}(\tau_{0,\gamma})\right]^{\wedge}\right) -
M_{\alpha}^{\nu} \hat{\nabla}_{\tau_{0,\beta}}\left( T^{m} \left[
C_{\tau_{0,\nu}}(\tau_{0,\gamma})\right]^{\wedge}\right) .
$$
Now, we write
\begin{equation}\label{constanta}
C_{\tau_{0,\beta}}(\tau_{0,\sigma}) =
c_{\beta\sigma}^{\mu}\tau_{0,\mu}
\end{equation}
where $c_{\beta\sigma}^{\mu}$ are the structure constants of the
Frobenius multiplication on $M$. With this notation, the above
relation becomes
\begin{equation}
(d^{\hat{\nabla}}\hat{C})_{\tau_{0,\alpha},
\tau_{0,\beta}}\left(T^{m}(\tau_{0,\gamma})\right) = \left(
M_{\beta}^{\mu}\tau_{0,\alpha}( \widehat{c_{\mu\gamma}^{\nu}}) -
M_{\alpha}^{\mu}\tau_{0,\beta}( \widehat{c_{\mu\gamma}^{\nu}})\right)
T^{m}(\tau_{0,\nu}).
\end{equation}
But from Lemma \ref{ian},
$$
\tau_{0,\alpha}( \widehat{c_{\mu\gamma}^{\nu}})=
\left[ \frac{\partial c_{\mu\gamma}^{\nu}}{\partial t_{0}^{\delta}}\right]^{\wedge}
M_{\alpha}^{\delta}
$$
and
$$
(d^{\hat{\nabla}}\hat{C})_{\tau_{0,\alpha},
\tau_{0,\beta}}T^{m}(\tau_{0,\gamma}) = \left[ \frac{\partial
c_{\mu\gamma}^{\nu}}{\partial t_{0}^{\delta}} \right]^{\wedge}\left(
M_{\alpha}^{\delta}M_{\beta}^{\mu} - M_{\beta}^{\delta}M_{\alpha}^{\mu}\right)
T^{m}(\tau_{0,\nu})
$$
which is zero, because
$\frac{\partial c_{\mu\gamma}^{\nu}}{\partial t_{0}^{\delta}}$
is symmetric in $\mu$ and $\delta$  (from
$d^{\nabla}C=0$) and $M_{\alpha}^{\delta}M_{\beta}^{\mu} - M_{\beta}^{\delta}M_{\alpha}^{\mu}$
is skew-symmetric in these indices (for any $\alpha$, $\beta$ fixed).

It remains to compute $(d^{\hat{\nabla}}
\hat{C})_{\tau_{0,\alpha}, T^{q}(\tau_{0,\beta})}
\left( T^{m}(\tau_{0,\gamma})\right)$, when $q\geq 1.$ Note that
\begin{align*}
&(d^{\hat{\nabla}} \hat{C})_{\tau_{0,\alpha},T^{q}(\tau_{0,\beta})}
\left( T^{m}(\tau_{0,\gamma})\right)\\
&=
-\hat{\nabla}_{T^{q}(\tau_{0,\beta})}\left(\hat{C}_{\tau_{0,\alpha}}
T^{m}(\tau_{0,\gamma})\right) -\hat{C}_{[\tau_{0,\alpha},
T^{q}(\tau_{0,\beta})]}T^{m}(\tau_{0,\gamma})\\
&= -\hat{\nabla}_{T^{q}
(\tau_{0,\beta})} \left( M_{\alpha}^{\sigma} T^{m}\left[
C_{\tau_{0,\sigma}}(\tau_{0,\gamma})\right]^{\wedge}\right)
+ \hat{C}_{T^{q-1}\left( \tau_{0,\alpha}\circ
\tau_{0,\beta}\right)} \left(
T^{m}(\tau_{0,\gamma})\right) \\
&= - T^{q} (\tau_{0,\beta}) (M_{\alpha}^{\sigma}) T^{m}\left(
\left[ C_{\tau_{0,\sigma}}(\tau_{0,\gamma})\right]^{\wedge}
\right)+ \hat{C}_{T^{q-1}\left( \tau_{0,\alpha}\circ
\tau_{0,\beta}\right)} \left(
T^{m}(\tau_{0,\gamma})\right) ,\\
\end{align*}
where we used $\hat{C}_{T^{q}(\tau_{0,\beta})}=0$,
$[ T^{q}(\tau_{0,\beta}), \tau_{0,\alpha}]= T^{q-1} (\tau_{0,\alpha}
\circ \tau_{0,\beta})$ and relation
(\ref{nabla-t}).
Thus,
\begin{align}
\nonumber (d^{\hat{\nabla}}
\hat{C})_{\tau_{0,\alpha},T^{q}(\tau_{0,\beta})} \left(
T^{m}(\tau_{0,\gamma})\right)& =
 - T^{q} (\tau_{0,\beta}) (M_{\alpha}^{\sigma}) T^{m}\left( \left[
C_{\tau_{0,\sigma}}(\tau_{0,\gamma})\right]^{\wedge} \right)\\
\label{D}& \phantom{=}+ \hat{C}_{T^{q-1}\left( \tau_{0,\alpha}\circ
\tau_{0,\beta}\right)} \left( T^{m}(\tau_{0,\gamma})\right) .
\end{align}
When $q\geq 2$, both terms from the right hand side of the above
equality vanish (for the first term, see Lemma \ref{deriv-M}). If
$q=1$, then again from Lemma \ref{deriv-M},
$$
T(\tau_{0,\beta}) (M_{\alpha}^{\sigma}) = (\tau_{0,\alpha}\circ
\tau_{0,\beta})(\widehat{f^{\sigma}})
$$
(recall that $f^{\sigma}(t_{0}^{1}, \cdots , t_{0}^{N}) =
t_{0}^{\sigma}$).  We obtain:
\begin{align*}
&(d^{\hat{\nabla}} \hat{C})_{\tau_{0,\alpha},T(\tau_{0,\beta})}
\left( T^{m}(\tau_{0,\gamma})\right)\\
&= -
\left(\tau_{0,\alpha}\circ\tau_{0,\beta}\right)(
\widehat{f^{\sigma}}) T^{m} \left(\left[ C_{\tau_{0,\sigma}}
(\tau_{0,\gamma})\right]^{\wedge}\right)  +
\hat{C}_{\tau_{0,\alpha}\circ \tau_{0,\beta}}
T^{m}(\tau_{0,\gamma}) .
\end{align*}
But the right hand side from the above
expression is zero, because, for any primary field $\mathcal W$ on
$M^{\infty}$ and $1\leq\gamma\leq N$,
\begin{equation}\label{int}
-{\mathcal W}( \widehat{f^{\sigma}}) T^{m}\left(
\left[C_{\tau_{0,\sigma}}(\tau_{0,\gamma})\right]^{\wedge}\right)
+\hat{C}_{\mathcal W}\left( T^{m}(\tau_{0,\gamma})\right) =0.
\end{equation}
(To prove (\ref{int}) we may assume, without loss of generality, that
${\mathcal W} =\tau_{0,\alpha}$; then we use  the definition of
$ \hat{C}_{\tau_{0,\alpha}}\left( T^{m}(\tau_{0,\gamma})\right)$
and $\tau_{0,\alpha}(\widehat{f^{\sigma}}) = M_{\alpha}^{\sigma}$, see Remark
\ref{deriv-rem}).  Our claim follows.

\end{proof}

\begin{lem}\label{lem-2}
The equality
\begin{equation}\label{need-1}
\hat{\nabla}_{\mathcal W} (\hat{R}_{0}) + \hat{C}_{\mathcal W }=
[\hat{C}_{\mathcal W}, \hat{R}_{\infty}],\quad\forall {\mathcal
W}\in {\mathcal T}_{M^{\infty}}
\end{equation}
holds.
\end{lem}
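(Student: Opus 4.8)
The plan is to prove (\ref{need-1}) by evaluating both sides on the frame $\{T^{n}(\tau_{0,\gamma})\}$ of $T^{1,0}M^{\infty}$, after first reducing to the case where $\mathcal W$ is itself a frame vector field $T^{m}(\tau_{0,\alpha})$; this reduction is legitimate because for fixed $\mathcal W$ equation (\ref{need-1}) is an equality in $\mathrm{End}(T^{1,0}M^{\infty})$ and because $\hat\nabla_{\mathcal W}(\hat R_{0})$, $\hat C_{\mathcal W}$ and $[\hat C_{\mathcal W},\hat R_{\infty}]$ are all $C^{\infty}(M^{\infty})$-linear in $\mathcal W$. When $m\geq 1$ the field $\mathcal W=T^{m}(\tau_{0,\alpha})$ lies in the image of $T$, so $\hat C_{\mathcal W}=0$ by Definition \ref{def-hat-c} and the right hand side of (\ref{need-1}) vanishes; for the left hand side one uses that $T^{n}(\tau_{0,\gamma})$ is $\hat\nabla$-parallel, writes $\hat R_{0}(T^{n}(\tau_{0,\gamma}))=\widehat{(R_{0})^{\mu}_{\gamma}}\,T^{n}(\tau_{0,\mu})$ (with $(R_{0})^{\mu}_{\gamma}$ the coefficients of $R_{0}$ in the flat frame), and applies (\ref{hat-kill}), which kills the derivative of the lifted coefficient functions along $\mathcal W\in\mathrm{Im}(T)$, so that $\hat\nabla_{\mathcal W}(\hat R_{0})$ also vanishes. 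Hence (\ref{need-1}) holds trivially when $\mathcal W\in\mathrm{Im}(T)$.

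It remains to treat $\mathcal W=\tau_{0,\alpha}$. Writing $R_{0}\tau_{0,\gamma}=(R_{0})^{\mu}_{\gamma}\,\tau_{0,\mu}$ and using the $\hat\nabla$-parallelism of $T^{n}(\tau_{0,\gamma})$, the first summand on the left of (\ref{need-1}), applied to $T^{n}(\tau_{0,\gamma})$, is $\hat\nabla_{\tau_{0,\alpha}}\bigl(\widehat{(R_{0})^{\mu}_{\gamma}}\,T^{n}(\tau_{0,\mu})\bigr)=\tau_{0,\alpha}\bigl(\widehat{(R_{0})^{\mu}_{\gamma}}\bigr)\,T^{n}(\tau_{0,\mu})$, and by (\ref{deriv-new}) this equals $M^{\sigma}_{\alpha}\,T^{n}\bigl([(\nabla_{\tau_{0,\sigma}}R_{0})(\tau_{0,\gamma})]^{\wedge}\bigr)$, where I have used $\nabla\tau_{0,\gamma}=0$ to recognise $\bigl(\partial_{t_{0}^{\sigma}}(R_{0})^{\mu}_{\gamma}\bigr)\,\tau_{0,\mu}$ as $(\nabla_{\tau_{0,\sigma}}R_{0})(\tau_{0,\gamma})$. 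The second summand is $\hat C_{\tau_{0,\alpha}}(T^{n}(\tau_{0,\gamma}))=M^{\sigma}_{\alpha}\,T^{n}\bigl([C_{\tau_{0,\sigma}}(\tau_{0,\gamma})]^{\wedge}\bigr)$ by (\ref{higgs-hat-1}). Thus the whole left hand side of (\ref{need-1}), evaluated on $T^{n}(\tau_{0,\gamma})$, is $M^{\sigma}_{\alpha}\,T^{n}\bigl([(\nabla_{\tau_{0,\sigma}}R_{0}+C_{\tau_{0,\sigma}})(\tau_{0,\gamma})]^{\wedge}\bigr)$.

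For the right hand side one exploits that $\hat R_{\infty}$ is the natural lift of $R_{\infty}$, hence $\hat R_{\infty}(T^{n}(\tau_{0,\mu}))=T^{n}([R_{\infty}\tau_{0,\mu}]^{\wedge})$, and that both $\hat C_{\tau_{0,\alpha}}$ and $T^{n}$ are $C^{\infty}(M^{\infty})$-linear. Expanding $R_{\infty}$ and $C$ in the flat frame and using the multiplicativity $[fg]^{\wedge}=\widehat f\,\widehat g$ of the natural lift of functions, one checks that $\hat C_{\tau_{0,\alpha}}\bigl(\hat R_{\infty}(T^{n}(\tau_{0,\gamma}))\bigr)=M^{\sigma}_{\alpha}\,T^{n}\bigl([C_{\tau_{0,\sigma}}(R_{\infty}\tau_{0,\gamma})]^{\wedge}\bigr)$ and $\hat R_{\infty}\bigl(\hat C_{\tau_{0,\alpha}}(T^{n}(\tau_{0,\gamma}))\bigr)=M^{\sigma}_{\alpha}\,T^{n}\bigl([R_{\infty}C_{\tau_{0,\sigma}}(\tau_{0,\gamma})]^{\wedge}\bigr)$, so that $[\hat C_{\tau_{0,\alpha}},\hat R_{\infty}]$ on $T^{n}(\tau_{0,\gamma})$ equals $M^{\sigma}_{\alpha}\,T^{n}\bigl([[C_{\tau_{0,\sigma}},R_{\infty}](\tau_{0,\gamma})]^{\wedge}\bigr)$. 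Comparing this with the expression found for the left hand side, the identity (\ref{need-1}) follows immediately from the Saito-bundle relation $\nabla R_{0}+C=[C,R_{\infty}]$ valid on $M$ (Definition \ref{saito}), read in the one-form slot $\tau_{0,\sigma}$ and applied to $\tau_{0,\gamma}$.

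The only real work is the bookkeeping of natural lifts; the step that might look most delicate is the emergence of the factor $M^{\sigma}_{\alpha}$ on the left, which comes from differentiating the lifted coefficient functions of $\hat R_{0}$ along $\tau_{0,\alpha}$ via (\ref{deriv-new}). The mechanism behind the proof is that $\hat\nabla$ and $\hat C$ were defined so that a $\tau_{0,\alpha}$-derivative on $M^{\infty}$ is converted, in exactly the same way for all three terms of the equation, into a $\tau_{0,\sigma}$-derivative on $M$ weighted by $M^{\sigma}_{\alpha}$; once this is observed the lifted identity reduces verbatim to the one on the small phase space.
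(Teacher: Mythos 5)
Your proof is correct and follows essentially the same route as the paper: split into the case $\mathcal W\in\mathrm{Im}(T)$ (where both sides vanish via the definition of $\hat C$ and the annihilation of lifted coefficient functions by $T$-image fields) and the case $\mathcal W=\tau_{0,\alpha}$, where all three terms acquire the same factor $M^{\sigma}_{\alpha}$ and the identity reduces to the Saito relation $\nabla R_{0}+C=[C,R_{\infty}]$ on $M$. The only cosmetic difference is that you make the $C^{\infty}(M^{\infty})$-linearity reduction to frame fields explicit, which the paper leaves implicit.
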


\begin{proof}
When ${\mathcal W}= T^{m}(\tau_{0,\alpha})$ with $m\geq 1$,
$\hat{C}_{\mathcal W}=0$ and relation (\ref{need-1}) is satisfied,
because
$$
\hat{\nabla}_{T^{m}(\tau_{0,\alpha})} (\hat{R}_{0})\left( T^{n}
(\tau_{0,\beta})\right) =
\hat{\nabla}_{T^{m}(\tau_{0,\alpha})}\left( T^{n}
\left[R_{0}(\tau_{0,\beta})\right]^{\wedge}\right) =0,
$$
where we used the definition
of $\hat{R}_{0}$ and relation
(\ref{nabla-t}).

It remains to check that
\begin{equation}\label{need-saito}
\hat{\nabla}_{\tau_{0,\alpha}} (\hat{R}_{0}) +
\hat{C}_{\tau_{0,\alpha} }= [\hat{C}_{\tau_{0,\alpha}},
\hat{R}_{\infty}].
\end{equation}

For this, recall that
\begin{equation}\label{extindere-cr-1}
\hat{C}_{\tau_{0,\alpha}}\left( T^{n}(\tau_{0,\beta})\right)
= M_{\alpha}^{\sigma}
T^{n}\left(\left[ C_{\tau_{0,\sigma}}(\tau_{0,\beta})\right]^{\wedge}
\right).
\end{equation}
Also, it is straightforward that
\begin{equation}\label{extindere-cr}
[\hat{C}_{\tau_{0,\alpha}}, \hat{R}_{\infty}] \left(
T^{n}(\tau_{0,\beta})\right) = M_{\alpha}^{\sigma} T^{n} \left(
[{C}_{\tau_{0,\sigma}},{R_{\infty}}](\tau_{0,\beta})^{\wedge}\right) .
\end{equation}
We now check that
\begin{equation}\label{required}
\hat{\nabla}_{\tau_{0,\alpha}}(\hat{R}_{0})\left(T^{n}(\tau_{0,\beta})\right)
= M_{\alpha}^{\sigma}T^{n}\left( \left[
\nabla_{\tau_{0,\sigma}}(R_{0})
(\tau_{0,\beta})\right]^{\wedge}\right) .
\end{equation}
For this, we  define functions $r_{\beta\mu}$ on $M$ by
$R_{0}(\tau_{0,\beta}) = r_{\beta \mu} \tau_{0,\mu}$. With this
notation, relation (\ref{required}) is proved as follows:
\begin{align*}
&\hat{\nabla}_{\tau_{0,\alpha}}(\hat{R}_{0})(T^{n}(\tau_{0,\beta}))=
\hat{\nabla}_{\tau_{0,\alpha}}\left( T^{n} \left[ R_{0}(\tau_{0,\beta})
\right]^{\wedge}\right) =\tau_{0,\alpha} \left( \widehat{r_{\beta \mu}}\right)
T^{n}(\tau_{0,\mu})\\
&= M_{\alpha}^{\sigma}\left[ \frac{\partial r_{\beta \mu}}{\partial
t_{0}^{\sigma}}\right]^{\wedge}T^{n}(\tau_{0,\mu}) = M^{\sigma}_{\alpha} T^{n}\left(\left[
\nabla_{\tau_{0,\sigma}}(R_{0})(\tau_{0,\beta})\right]^{\wedge}\right).
\end{align*}

Combining the above relation with (\ref{extindere-cr-1})  we obtain:
\begin{align*}
&\hat{\nabla}_{\tau_{0,\alpha}} (\hat{R}_{0}) \left(
T^{n}(\tau_{0,\beta})\right) + \hat{C}_{\tau_{0,\alpha}}\left(
T^{n} (\tau_{0,\beta})\right)\\
&  = M^{\sigma}_{\alpha} T^{n}\left(\left[
\nabla_{\tau_{0,\alpha}}(R_{0}) (\tau_{0,\beta}) +
C_{\tau_{0,\sigma}}(\tau_{0,\beta})\right]^{\wedge}\right) \\
&= M_{\alpha}^{\sigma} T^{n} \left(
[{C}_{\tau_{0,\sigma}},R_{\infty}](\tau_{0,\beta})^{\wedge}\right)
= [ \hat{C}_{\tau_{0,\alpha}}, \hat{R}_{\infty}]\left( T^{n}(\tau_{0,\beta})\right)
\end{align*}
where we used
$$
\nabla (R_{0}) + C = [C, R_{\infty}],
$$
because $(T^{1,0}M, \nabla , \eta , C, R_{0}, R_{\infty})$ is a
Saito bundle, and relation  (\ref{extindere-cr}).  We obtained (\ref{need-saito}), as required.

\end{proof}

In the followings we introduce the real structure $k$ into the
picture and we study the $tt^{*}$-geometry on the big phase space.

\subsection{Lifted harmonic Higgs bundles}\label{harmonic-sect}

\begin{thm}\label{thm-harmonic} Assume that $(T^{1,0}M, C, h)$ is a
harmonic Higgs bundle. Then
$(T^{1,0}M^{\infty},\hat{C}, \hat{h})$ is also a harmonic Higgs
bundle.
\end{thm}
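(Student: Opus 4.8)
The plan is to verify the two $tt^{*}$-equations for $(T^{1,0}M^{\infty}, \hat C, \hat h)$, namely
\[
(\partial^{\hat D}\hat C)_{Z_{1},Z_{2}} = 0, \qquad {}^{\hat D}\! R_{Z_{1},\bar Z_{2}} + [\hat C_{Z_{1}}, \hat C^{\dagger}_{\bar Z_{2}}] = 0,
\]
by reducing each to the corresponding equation on $M$, which holds by hypothesis. The key point is that all the relevant objects --- $\hat D$ (Lemma \ref{chern}), its curvature (Lemma \ref{curvature}), and $\hat C$ (Definition \ref{def-hat-c}) --- have been arranged to behave identically: they all vanish on $\mathrm{Im}(T)$, and on primary directions $\tau_{0,\alpha}$ they act by multiplication by $M^{\sigma}_{\alpha}$ followed by $T^{n}$ applied to the natural lift of the corresponding object on $M$. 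So the whole argument is a bookkeeping of how the factors $M^{\sigma}_{\alpha}$ and the $\delta$-symbols propagate.

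\textbf{First $tt^{*}$-equation.} I would evaluate $(\partial^{\hat D}\hat C)_{\mathcal W_{1},\mathcal W_{2}}$ on frame fields $\mathcal W_{1} = T^{p}(\tau_{0,\alpha})$, $\mathcal W_{2} = T^{q}(\tau_{0,\beta})$ applied to $T^{m}(\tau_{0,\gamma})$, mimicking exactly the case analysis in Lemma \ref{lem-1}. When $p,q \geq 1$, both $\hat C_{\mathcal W_{i}}$ vanish and $[T^{p}(\tau_{0,\alpha}), T^{q}(\tau_{0,\beta})] = 0$ by Lemma \ref{cr}, so the term is zero. When $p = q = 0$, I expand using $\hat D_{\tau_{0,\alpha}}(T^{n}(\tau_{0,\beta})) = M^{\sigma}_{\alpha} T^{n}([D_{\tau_{0,\sigma}}(\tau_{0,\beta})]^{\wedge})$ from Lemma \ref{chern} and the definition of $\hat C$; the $\partial M^{\sigma}_{\alpha}/\partial t^{\beta}_{0}$ terms are symmetric in $\alpha,\beta$ (as in Lemma \ref{lem-1}, via \eqref{deriv-tensor}), and the surviving piece is $M^{\delta}_{\alpha} M^{\mu}_{\beta}$ times a natural lift of $(\partial^{D}C)_{\tau_{0,\mu},\tau_{0,\delta}}$-type terms; antisymmetrizing in $\alpha,\beta$ (which antisymmetrizes $M^{\delta}_{\alpha}M^{\mu}_{\beta}$ in $\delta,\mu$) against the $\delta\leftrightarrow\mu$-symmetry coming from $(\partial^{D}C) = 0$ on $M$ kills it. The mixed case $p = 0$, $q \geq 1$ is handled exactly as the last part of Lemma \ref{lem-1}: the bracket $[\tau_{0,\alpha}, T^{q}(\tau_{0,\beta})] = T^{q-1}(\tau_{0,\alpha}\circ\tau_{0,\beta})$, Lemma \ref{deriv-M} (which makes $T^{q}(\tau_{0,\beta})(M^{\sigma}_{\alpha})$ vanish for $q \geq 2$ and symmetric in $\alpha,\beta$ for $q = 1$), and the identity \eqref{int} combine to give zero.

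\textbf{Second $tt^{*}$-equation.} Here I would use that for $m$ or $n$ positive, ${}^{\hat D}\! R_{T^{n}(\tau_{0,\alpha}), \overline{T^{m}(\tau_{0,\beta})}} = 0$ (Lemma \ref{curvature}) and simultaneously $\hat C_{T^{n}(\tau_{0,\alpha})} = 0$ and $\hat C^{\dagger}_{\overline{T^{m}(\tau_{0,\beta})}} = 0$ --- the latter because $\hat C^{\dagger}$ is the $\hat h$-adjoint of $\hat C$ and $\hat h$ is block-diagonal in the $T^{n}$-grading while $\hat C$ preserves that grading, so the adjoint also vanishes off the primary block. Hence both equations reduce to the case $\mathcal W_{1} = \tau_{0,\alpha}$, $\mathcal W_{2} = \tau_{0,\beta}$. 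Evaluating on $T^{r}(\tau_{0,\gamma})$: by Lemma \ref{curvature}, ${}^{\hat D}\! R_{\tau_{0,\alpha}, \overline{\tau_{0,\beta}}}(T^{r}(\tau_{0,\gamma})) = M^{\sigma}_{\alpha}\overline{M^{\nu}_{\beta}}\, T^{r}([{}^{D}\! R_{\tau_{0,\sigma},\overline{\tau_{0,\nu}}}(\tau_{0,\gamma})]^{\wedge})$. For the commutator term, I need the analogous formula $[\hat C_{\tau_{0,\alpha}}, \hat C^{\dagger}_{\overline{\tau_{0,\beta}}}](T^{r}(\tau_{0,\gamma})) = M^{\sigma}_{\alpha}\overline{M^{\nu}_{\beta}}\, T^{r}([[C_{\tau_{0,\sigma}}, C^{\dagger}_{\overline{\tau_{0,\nu}}}](\tau_{0,\gamma})]^{\wedge})$; this requires first checking that $\hat C^{\dagger}_{\overline{\tau_{0,\beta}}}(T^{r}(\tau_{0,\gamma})) = \overline{M^{\nu}_{\beta}}\, T^{r}([C^{\dagger}_{\overline{\tau_{0,\nu}}}(\tau_{0,\gamma})]^{\wedge})$, which follows from the defining relation $\hat h(\hat C_{\mathcal W_{1}}\mathcal W_{2}, \mathcal W_{3}) = \hat h(\mathcal W_{2}, \hat C^{\dagger}_{\overline{\mathcal W_{1}}}\mathcal W_{3})$ together with the formula \eqref{hat-h} for $\hat h$ on lifts and the analogous adjointness of $C^{\dagger}$ on $M$. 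Adding the two, the common factor $M^{\sigma}_{\alpha}\overline{M^{\nu}_{\beta}}$ multiplies $T^{r}$ applied to the natural lift of $({}^{D}\! R_{\tau_{0,\sigma},\overline{\tau_{0,\nu}}} + [C_{\tau_{0,\sigma}}, C^{\dagger}_{\overline{\tau_{0,\nu}}}])(\tau_{0,\gamma})$, which is zero since $(T^{1,0}M, C, h)$ is a harmonic Higgs bundle.

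\textbf{Main obstacle.} The only nonroutine step is establishing the clean formula for $\hat C^{\dagger}$ on frame fields --- i.e.\ that the $\hat h$-adjoint of $\hat C$ is itself a "lift" of $C^{\dagger}$ with the same $M^{\sigma}_{\alpha}$-structure and the same vanishing on $\mathrm{Im}(T)$. This needs the compatibility between $\hat h$ (block-diagonal with blocks the natural lifts $[h(\tau_{0,\alpha},\tau_{0,\beta})]^{\wedge}$) and the $M^{\sigma}_{\alpha}$-twisted action of $\hat C_{\tau_{0,\alpha}}$; once that is pinned down, everything else is the grading bookkeeping already rehearsed in Lemmas \ref{chern}, \ref{curvature}, \ref{lem-1} and \ref{lem-2}. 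I would state this adjoint formula as a short preliminary computation before assembling the two equations.
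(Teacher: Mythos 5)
Your proposal is correct and follows essentially the same route as the paper: Theorem \ref{thm-harmonic} is proved there by combining Lemma \ref{preliminar-1} (the first $tt^{*}$-equation, via the case analysis on $T^{p}(\tau_{0,\alpha})$, $T^{q}(\tau_{0,\beta})$ and the antisymmetrization of the $M^{\sigma}_{\alpha}M^{\nu}_{\beta}$-factors) and Lemma \ref{preliminar-2} (the second $tt^{*}$-equation, whose proof opens with exactly the adjoint formula $\hat{C}^{\dagger}_{\overline{\tau_{0,\beta}}}(T^{m}(\tau_{0,\gamma})) = \overline{M^{\nu}_{\beta}}\, T^{m}([C^{\dagger}_{\overline{\tau_{0,\nu}}}(\tau_{0,\gamma})]^{\wedge})$ that you flag as the main obstacle). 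Both your reduction of the mixed/higher cases to the vanishing of $\hat{C}$ and ${}^{\hat{D}}\!R$ on $\mathrm{Im}(T)$ and your final comparison with the equations on $M$ match the paper's argument.
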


The proof follows by combining the  Lemmas \ref{preliminar-1} and
\ref{preliminar-2} (see below). These lemmas hold for $h$ an
arbitrary pseudo-Hermitian metric on $M$ (not necessarily related
to $\eta$ by means of $k$), without any compatibility conditions
between $h$  and the Frobenius structure on $M$.

\begin{lem}\label{preliminar-1}
i) The following relation holds:
\begin{equation}\label{first-tt-1}
\left(\partial^{\hat{D}}\hat{C}\right)_{\tau_{0,\alpha},
\tau_{0,\beta}} \left(T^{n}(\tau_{0,\gamma}) \right) =
M^{\nu}_{\alpha} M^{\sigma}_{\beta} T^{n}\left( \left[
(\partial^{D}C)_{\tau_{0,\nu},\tau_{0,\sigma}}(\tau_{0,\gamma
})\right]^{\wedge}\right) ,
\end{equation}
for any $1\leq \alpha ,\beta ,\gamma\leq N$ and $n\geq 0.$ If $m$
or $p$ is bigger than zero, then
\begin{equation}\label{first-tt-2}
\left(\partial^{\hat{D}}\hat{C}\right)_{T^{m}(\tau_{0,\alpha}),
T^{p}(\tau_{0,\beta})} \left( T^{n}(\tau_{0,\gamma}) \right)=0.
\end{equation}
ii) In particular, if the first $tt^{*}$-equation
$\partial^{D}C=0$ for $(T^{1,0}M, C, h)$ holds,
then the first $tt^{*}$-equation $\partial^{\hat{D}}\hat{C}=0$ for
$(T^{1,0}M^{\infty},\hat{C}, \hat{h})$ holds as well.
\end{lem}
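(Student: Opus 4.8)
The plan is to verify the formula for $\partial^{\hat D}\hat C$ directly on the frame $\{T^m(\tau_{0,\gamma})\}$ by a case analysis on the slots into which the $1$-form $\hat C$ is evaluated, exploiting the fact that $\hat C$ is supported only on primary directions and that the Chern connection $\hat D$ and the bracket relations of Lemma \ref{cr} interact cleanly with vector fields of the form $T^m(\hat X)$. Recall that by definition
\[
(\partial^{\hat D}\hat C)_{{\mathcal W}_1,{\mathcal W}_2}
= \hat D_{{\mathcal W}_1}(\hat C_{{\mathcal W}_2})
- \hat D_{{\mathcal W}_2}(\hat C_{{\mathcal W}_1})
- \hat C_{[{\mathcal W}_1,{\mathcal W}_2]}.
\]
The first (easy) case is ${\mathcal W}_1 = T^m(\tau_{0,\alpha})$, ${\mathcal W}_2 = T^p(\tau_{0,\beta})$ with $m,p\ge 1$: then $\hat C_{{\mathcal W}_1}=\hat C_{{\mathcal W}_2}=0$ by Definition \ref{def-hat-c}, and $[T^m(\tau_{0,\alpha}),T^p(\tau_{0,\beta})]=0$ by Lemma \ref{cr}, so the whole expression vanishes, giving the first half of \eqref{first-tt-2}.

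The second case, which is where the mixed identity \eqref{first-tt-2} is really used, is ${\mathcal W}_1 = \tau_{0,\alpha}$ (primary) and ${\mathcal W}_2 = T^p(\tau_{0,\beta})$ with $p\ge 1$. Here $\hat C_{{\mathcal W}_2}=0$, so the first term drops; the commutator $[\tau_{0,\alpha},T^p(\tau_{0,\beta})] = -T^{p-1}(\tau_{0,\alpha}\circ\tau_{0,\beta})$ by Lemma \ref{cr}, so $\hat C_{[{\mathcal W}_1,{\mathcal W}_2]} = -\hat C_{T^{p-1}(\tau_{0,\alpha}\circ\tau_{0,\beta})}$, which is $0$ when $p\ge 2$ and equals $-\hat C_{\tau_{0,\alpha}\circ\tau_{0,\beta}}$ when $p=1$. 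The remaining middle term is $-\hat D_{T^p(\tau_{0,\beta})}(\hat C_{\tau_{0,\alpha}})$ applied to $T^n(\tau_{0,\gamma})$; using $\hat C_{\tau_{0,\alpha}}(T^n(\tau_{0,\gamma})) = M_\alpha^\sigma T^n([C_{\tau_{0,\sigma}}(\tau_{0,\gamma})]^\wedge)$ together with \eqref{chern-2} (which kills $\hat D$ in directions from $\mathrm{Im}\,T$ acting on vector fields of the form $T^n(\hat X)$, i.e.\ \eqref{d-d}), the only surviving contribution is $-T^p(\tau_{0,\beta})(M_\alpha^\sigma)\,T^n([C_{\tau_{0,\sigma}}(\tau_{0,\gamma})]^\wedge)$. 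By Lemma \ref{deriv-M} this derivative vanishes for $p\ge 2$, finishing the remaining case of \eqref{first-tt-2}, and for $p=1$ it equals $(\tau_{0,\alpha}\circ\tau_{0,\beta})(\widehat{f^\sigma})$; combining with the $p=1$ commutator term and using identity \eqref{int} from the proof of Lemma \ref{lem-1} (which is precisely the statement that these two contributions cancel) gives $0$ in that case too. This case-by-case bookkeeping, and in particular correctly invoking \eqref{int}, is the place where one must be careful, but it is not genuinely hard.

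The main case is ${\mathcal W}_1 = \tau_{0,\alpha}$, ${\mathcal W}_2 = \tau_{0,\beta}$, both primary, where $[\tau_{0,\alpha},\tau_{0,\beta}]=0$ (these are coordinate vector fields) so $(\partial^{\hat D}\hat C)_{\tau_{0,\alpha},\tau_{0,\beta}}(T^n(\tau_{0,\gamma})) = \hat D_{\tau_{0,\alpha}}(\hat C_{\tau_{0,\beta}})(T^n(\tau_{0,\gamma})) - (\alpha\leftrightarrow\beta)$. I would expand $\hat C_{\tau_{0,\beta}}(T^n(\tau_{0,\gamma})) = M_\beta^\sigma T^n([C_{\tau_{0,\sigma}}(\tau_{0,\gamma})]^\wedge)$, apply $\hat D_{\tau_{0,\alpha}}$ using the Leibniz rule, and compute $\hat D_{\tau_{0,\alpha}}(M_\beta^\sigma T^n[\cdots])$ via \eqref{chern-1} together with Remark \ref{deriv-rem} (i.e.\ $\tau_{0,\alpha}(\widehat g) = [\partial g/\partial t_0^\rho]^\wedge M_\alpha^\rho$) to pull out the extra derivative of $M_\beta^\sigma$ in the $\alpha$ direction. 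After antisymmetrising in $\alpha,\beta$, the $M$-derivative cross terms $\tau_{0,\alpha}(M_\beta^\sigma)\cdot(\cdots) - \tau_{0,\beta}(M_\alpha^\sigma)\cdot(\cdots)$ must be shown symmetric and hence cancel — this is exactly the symmetry of $\partial M_\beta^\sigma/\partial t_0^\alpha = \eta^{\sigma\mu}\langle\langle\tau_{0,1}\tau_{0,\mu}\tau_{0,\beta}\tau_{0,\alpha}\rangle\rangle_0$ in $\alpha,\beta$, established in the proof of Lemma \ref{lem-1}. What remains is $M_\alpha^\nu M_\beta^\sigma$ times the antisymmetrisation of $T^n([D_{\tau_{0,\nu}}C_{\tau_{0,\sigma}}(\tau_{0,\gamma})]^\wedge)$, which is precisely $M_\alpha^\nu M_\beta^\sigma T^n([(\partial^D C)_{\tau_{0,\nu},\tau_{0,\sigma}}(\tau_{0,\gamma})]^\wedge)$ since on the small phase space $[\tau_{0,\nu},\tau_{0,\sigma}]=0$. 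This yields \eqref{first-tt-1}. Part (ii) is then immediate: if $\partial^D C=0$ then every right-hand side in (i) vanishes, and since the vectors $T^n(\tau_{0,\gamma})$ frame $T^{1,0}M^\infty$ (Lemma \ref{cr}) and $\partial^{\hat D}\hat C$ is tensorial, $\partial^{\hat D}\hat C=0$. The only real obstacle is the careful handling of the $p=1$ mixed case, where one needs the non-obvious cancellation \eqref{int}; everything else is organised bookkeeping once \eqref{chern-1}, \eqref{chern-2}, Lemma \ref{deriv-M} and the symmetry of $\partial M_\beta^\sigma/\partial t_0^\alpha$ are in hand.
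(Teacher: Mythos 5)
Your proof is correct and follows essentially the same route as the paper: a case analysis on the frame $\{T^m(\tau_{0,\gamma})\}$, using (\ref{chern-1}), (\ref{d-d}), Lemma \ref{deriv-M}, the symmetry of $\eta^{\sigma\mu}<<\tau_{0,1}\tau_{0,\mu}\tau_{0,\beta}\tau_{0,\alpha}>>_{{}_0}$ in $\alpha,\beta$, and the cancellation (\ref{int}), with the primary--primary case reducing to the antisymmetrisation in $\nu,\sigma$ that reproduces $(\partial^{D}C)_{\tau_{0,\nu},\tau_{0,\sigma}}$. The only organisational difference is in the mixed case $(\tau_{0,\alpha},T^{p}(\tau_{0,\beta}))$, where the paper observes that $\hat{D}_{\mathcal W}=\hat{\nabla}_{\mathcal W}$ for ${\mathcal W}\in\mathrm{Im}(T)$ and quotes Lemma \ref{lem-1} wholesale, whereas you redo that computation directly with $\hat{D}$ --- the underlying cancellation is identical.
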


\begin{proof}
From the definition of $\hat{C}$ and
\begin{equation}
[T^{m} (\tau_{0,\alpha}), T^{p}(\tau_{0,\beta})] = 0,\quad\forall m,p\geq 1,
\end{equation}
it is immediately clear that (\ref{first-tt-2})
is true when both $m, p\geq 1$.
Also, note that $\hat{D}_{\mathcal W}=\hat{\nabla}_{\mathcal W}$, for any
vector field $\mathcal W$ in the image of $T$. Using this remark,
together with the definition of $\hat{C}$ again, we obtain:
$$
(\partial^{\hat{D}}\hat{C})_{\tau_{0,\alpha}, T^{p}(\tau_{0,\beta})}
\left( T^{n}(\tau_{0,\gamma})\right)
= (d^{\hat{\nabla}}\hat{C})_{\tau_{0,\alpha}, T^{p}(\tau_{0,\beta})}
\left( T^{n}(\tau_{0,\gamma})\right) ,\quad\forall p\geq 1,
$$
which is zero from Lemma \ref{lem-1}.
It remains to prove (\ref{first-tt-1}).
For this, we compute $\hat{D}_{\tau_{0,\alpha}}\left(
\hat{C}_{\tau_{0,\beta}}\right) \left( T^{n}(\tau_{0,\gamma})\right)$
as follows:
\begin{align*}
&\hat{D}_{\tau_{0,\alpha}}\left( \hat{C}_{\tau_{0,\beta}}\right)
\left( T^{n}(\tau_{0,\gamma})\right)=
\hat{D}_{\tau_{0,\alpha}}\left(
\hat{C}_{\tau_{0,\beta}}T^{n}(\tau_{0,\gamma})\right)
-\hat{C}_{\tau_{0,\beta}} \hat{D}_{\tau_{0,\alpha}} \left(
T^{n}(\tau_{0,\gamma})\right)\\
&= \hat{D}_{\tau_{0,\alpha}}\left( M^{\sigma}_{\beta} T^{n} \left[
C_{\tau_{0,\sigma}}(\tau_{0,\gamma})\right]^{\wedge}\right)  -
M^{\sigma}_{\alpha} \hat{C}_{\tau_{0,\beta}} \left( T^{n}\left[
D_{\tau_{0,\sigma}}(\tau_{0,\gamma})\right]^{\wedge}\right)\\
&= \eta^{\sigma \nu} <<
\tau_{0,1}\tau_{0,\nu}\tau_{0,\beta}\tau_{0,\alpha} >>_{{}_0} T^{n}
\left( \left[
C_{\tau_{0,\sigma}}(\tau_{0,\gamma})\right]^{\wedge}\right) \\
& + M_{\beta}^{\sigma} \hat{D}_{\tau_{0,\alpha}} \left( T^{n}
\left[ C_{\tau_{0,\sigma}}(\tau_{0,\gamma}) \right]^{\wedge}
\right)  - M_{\alpha}^{\sigma} \hat{C}_{\tau_{0,\beta}}\left(
T^{n} \left[ D_{\tau_{0,\sigma}}(\tau_{0,\gamma}) \right]^{\wedge}
\right) ,
\end{align*}
where in the third line we used
$$
\tau_{0,\alpha} (M_{\beta}^{\sigma}) =\eta^{\sigma \nu} <<
\tau_{0,1}\tau_{0,\nu}\tau_{0,\beta}\tau_{0,\alpha} >>_{{}_0}.
$$
To simplify notation, we define
\begin{align*}
E_{1}(\alpha , \beta ,\gamma ,n)&:=\eta^{\sigma \nu} <<
\tau_{0,1}\tau_{0,\nu}\tau_{0,\beta}\tau_{0,\alpha} >>_{{}_0} T^{n}
\left( \left[
C_{\tau_{0,\sigma}}(\tau_{0,\gamma})\right]^{\wedge}\right)\\
E_{2}(\alpha , \beta ,\gamma ,n) &:= M_{\beta}^{\sigma}
\hat{D}_{\tau_{0,\alpha}}\left( T^{n} \left[
C_{\tau_{0,\sigma}}(\tau_{0,\gamma}) \right]^{\wedge}\right) =
M^{\sigma}_{\beta}\hat{D}_{\tau_{0,\alpha}}\left(
\widehat{c_{\sigma\gamma}^{\mu}}T^{n}(\tau_{0,\mu}) \right)\\
E_{3}(\alpha , \beta ,\gamma ,n) &:= M_{\alpha}^{\sigma}
\hat{C}_{\tau_{0,\beta}}\left( T^{n} \left[
D_{\tau_{0,\sigma}}(\tau_{0,\gamma}) \right]^{\wedge} \right) ,
\end{align*}
where $c_{\sigma\gamma}^{\mu}$ are the structure constants of the
Frobenius multiplication on $M$, already defined in (\ref{constanta}). With
these
$$
\hat{D}_{\tau_{0,\alpha}}\left( \hat{C}_{\tau_{0,\beta}}\right)
\left( T^{n}(\tau_{0,\gamma})\right)= (E_{1}+ E_{2} -
E_{3})(\alpha , \beta , \gamma , n).
$$
Since
$$
(\partial^{\hat{D}}\hat{C})_{\tau_{0,\alpha}, \tau_{0,\beta}}
\left( T^{n}(\tau_{0,\gamma})\right)  =
\hat{D}_{\tau_{0,\alpha}}\left(\hat{C}_{\tau_{0,\beta}}\right)
(T^{n}(\tau_{0,\gamma}))
-\hat{D}_{\tau_{0,\beta}}\left(\hat{C}_{\tau_{0,\alpha}}\right)
(T^{n}(\tau_{0,\gamma})) ,
$$
we need to compute the skew part (in $\alpha$ and $\beta$) of
$E_{1}$, $E_{2}$ and $E_{3}.$ It is clear that $E_{1}$ is
symmetric in $\alpha$ and $\beta$. Also,
\begin{align*}
E_{2}(\alpha ,\beta ,\gamma ,n)&= M_{\beta}^{\sigma} \left(
\tau_{0,\alpha}\left(\widehat{c_{\sigma\gamma}^{\mu}}\right)
T^{n}(\tau_{0,\mu}) + \widehat{c_{\sigma\gamma}^{\mu}}
M_{\alpha}^{\nu}
T^{n}\left[ D_{\tau_{0,\nu}}(\tau_{0,\mu})\right]^{\wedge} \right)\\
& = M_{\beta}^{\sigma} \left( \left[ \frac{\partial
c_{\sigma\gamma}^{\mu}}{\partial t^{\nu}_{0}}\right]^{\wedge}
M_{\alpha}^{\nu} T^{n} (\tau_{0,\mu}) +
\widehat{c_{\sigma\gamma}^{\mu}} M_{\alpha}^{\nu} T^{n}\left[
D_{\tau_{0,\nu}}(\tau_{0,\mu}) \right]^{\wedge}\right)
\end{align*}
and a straightforward computation shows that
\begin{align*}
&E_{2}(\alpha ,\beta ,\gamma ,n) - E_{2}(\beta,\alpha ,\gamma ,n)=
M^{\sigma}_{\beta} M^{\nu}_{\alpha} \left[ \frac{\partial
c_{\sigma\gamma}^{\mu}}{\partial t_{0}^{\nu}} - \frac{\partial
c_{\nu\gamma}^{\mu}}{\partial t_{0}^{\sigma}}\right]^{\wedge}
T^{n}(\tau_{0,\mu})\\
& + M_{\beta}^{\sigma} M_{\alpha}^{\nu} T^{n}\left( \left[
c_{\sigma\gamma}^{\mu} D_{\tau_{0,\nu}}(\tau_{0,\mu}) -
c_{\nu\gamma}^{\mu} D_{\tau_{0,\sigma}} (\tau_{0,\mu})
\right]^{\wedge} \right) .
\end{align*}
Since $\frac{\partial
c_{\sigma\gamma}^{\mu}}{\partial t^{\nu}_{0}}$ is symmetric in
$\sigma$ and $\nu$, the first term in the right hand side of the above
relation vanishes  and we obtain:
\begin{equation}
E_{2}(\alpha ,\beta ,\gamma ,n) - E_{2}(\beta,\alpha ,\gamma ,n)=
M_{\beta}^{\sigma} M_{\alpha}^{\nu}\left[
c_{\sigma\gamma}^{\mu}f_{\nu \mu}^{\delta} -
c_{\nu\gamma}^{\mu} f_{\sigma \mu}^{\delta}
\right]^{\wedge}
T^{n}(\tau_{0,\delta}),
\end{equation}
where the functions $f_{\mu\gamma}^{\delta}$ are defined by
$D_{\tau_{0,\mu}}(\tau_{0,\gamma }) = f_{\mu\gamma}^{\delta}\tau_{0,\delta}$. A
similar computation shows that
\begin{equation}
E_{3}(\alpha ,\beta ,\gamma ,n) - E_{3}(\beta,\alpha ,\gamma ,n)=
M^{\nu}_{\alpha} M^{\sigma}_{\beta} \left[
c_{\sigma \mu}^{\delta} f^{\mu}_{\nu\gamma} -
c_{\nu \mu}^{\delta}f_{\sigma\gamma}^{\mu}
\right]^{\wedge} T^{n}(\tau_{0,\delta}) .
\end{equation}
Hence we obtain
\begin{align*}
& (\partial^{\hat{D}}\hat{C})_{\tau_{0,\alpha}, \tau_{0,\beta}}
\left( T^{n}(\tau_{0,\gamma})\right)
= (E_{2}- E_{3})(\alpha , \beta , \gamma, n)
- (E_{2}- E_{3})(\beta , \alpha , \gamma, n)\\
& = M^{\nu}_{\alpha} M^{\sigma}_{\beta} \left[
c_{\sigma\gamma}^{\mu}f_{\nu \mu}^{\delta} -
c_{\nu\gamma}^{\mu} f_{\sigma \mu}^{\delta} -
c_{\sigma \mu}^{\delta} f_{\nu\gamma}^{\mu} +
c_{\nu \mu}^{\delta}f_{\sigma\gamma}^{\mu}
\right]^{\wedge} T^{n}(\tau_{0,\delta}).
\end{align*}
On the other hand, one may check that
$$
(\partial^{D} C)_{\tau_{0,\nu},\tau_{0,\sigma}} (\tau_{0,\gamma })
=\left(
c_{\sigma\gamma}^{\mu}f_{\nu \mu}^{\delta} -
c_{\nu\gamma}^{\mu} f_{\sigma \mu}^{\delta} -
c_{\sigma \mu}^{\delta} f_{\nu\gamma}^{\mu} +
c_{\nu \mu}^{\delta}f_{\sigma\gamma}^{\mu}
\right) \tau_{0,\delta} .
$$
Combining the above two relations, we obtain (\ref{first-tt-1}),
as required. Our claim follows.
\end{proof}

\begin{lem}\label{preliminar-2}
i) The following relation holds: for any $1\leq \alpha , \beta ,
\gamma\leq N$ and $m\geq 0$,
\begin{align}
\nonumber&{}^{\hat{D}} \!
R_{\tau_{0,\alpha},\overline{\tau_{0,\beta}}}
(T^{m}(\tau_{0,\gamma} )) + [\hat{C}_{\tau_{0,\alpha}},
\hat{C}^{\dagger}_{\overline{\tau_{0,\beta}}}](T^{m}(\tau_{0,\gamma}))\\
\label{sec-c}&=M^{\sigma}_{\alpha}\overline{ M^{\nu}_{\beta}}
T^{m}\left(\left[ {}^{D} \!
R_{\tau_{0,\sigma},\overline{\tau_{0,\nu}}}(\tau_{0,\gamma}) +
[C_{\tau_{0,\sigma}},
C^{\dagger}_{\overline{\tau_{0,\nu}}}](\tau_{0,\gamma})
\right]^{\wedge}\right)\,,
\end{align}
where $C^{\dagger}\in \Omega^{0,1}(M, \mathrm{End}T^{1,0}M)$ and
$\hat{C}^{\dagger}\in \Omega^{0,1}(M^{\infty},
\mathrm{End}T^{1,0}M^{\infty})$ are, respectively, the $h$-adjoint of $C$ and
the $\hat{h}$-adjoint of $\hat{C}.$

ii) In particular, if the second $tt^{*}$-equation
$$
{}^{D}\! R + [C, C^{\dagger}]=0
$$
for $(T^{1,0}M, C, h)$ holds, then
the second $tt^{*}$-equation
$$
{}^{\hat{D}} \! R + [\hat{C}, \hat{C}^{\dagger}]=0
$$
for $(T^{1,0}M^{\infty},
\hat{C}, \hat{h})$ holds as well.
\end{lem}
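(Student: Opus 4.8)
The plan is to verify the two $tt^{*}$-equations on the frame $\{T^{r}(\tau_{0,\gamma})\}$ of $T^{1,0}M^{\infty}$ supplied by Lemma \ref{cr}, which suffices since both ${}^{\hat D}\!R$ and $[\hat C,\hat C^{\dagger}]$ are $C^{\infty}(M^{\infty})$-tensorial in all arguments. The curvature term is already handled by Lemma \ref{curvature}: relation (\ref{curv1}) gives the $(\tau_{0,\alpha},\overline{\tau_{0,\beta}})$ component, and ${}^{\hat D}\!R_{T^{n}(\tau_{0,\alpha}),\overline{T^{m}(\tau_{0,\beta})}}$ vanishes as soon as $n$ or $m$ is positive. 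Thus the genuinely new ingredient is a workable formula for the $\hat h$-adjoint $\hat C^{\dagger}$, and computing it is the key preliminary step.

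To obtain $\hat C^{\dagger}$ I would take the $\hat h$-adjoint of the two defining relations in Definition \ref{def-hat-c}. Writing $h_{\mu\gamma}=h(\tau_{0,\mu},\tau_{0,\gamma})$ and using that $\hat h$ is the natural lift of $h$, that $h$ is Hermitian symmetric, that conjugation commutes with natural lifts of functions (Remark \ref{rem-ajut}), that $\widehat{fg}=\hat f\,\hat g$, that $T$ is $C^{\infty}$-linear (immediate from (\ref{endom-T}) and tensoriality of the quantum product), and the non-degeneracy of $\hat h$, a short manipulation of $\hat h(\hat C_{\tau_{0,\beta}}\mathcal W_{1},\mathcal W_{2})=\hat h(\mathcal W_{1},\hat C^{\dagger}_{\overline{\tau_{0,\beta}}}\mathcal W_{2})$ yields
\[
\hat C^{\dagger}_{\overline{\tau_{0,\beta}}}\bigl(T^{n}(\tau_{0,\gamma})\bigr)=\overline{M^{\sigma}_{\beta}}\;T^{n}\Bigl(\bigl[C^{\dagger}_{\overline{\tau_{0,\sigma}}}(\tau_{0,\gamma})\bigr]^{\wedge}\Bigr),\qquad \hat C^{\dagger}_{\overline{T^{m}(\tau_{0,\beta})}}=0\ \ (m\geq1),
\]
the second identity being forced by $\hat C_{T^{m}(\tau_{0,\beta})}=0$ together with non-degeneracy of $\hat h$. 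This is meaningful: although $C^{\dagger}_{\overline{\tau_{0,\sigma}}}(\tau_{0,\gamma})$ is only a smooth, not holomorphic, section of $T^{1,0}M$, its natural lift is defined componentwise as in Definition \ref{def-ext}, exactly as for the lifted curvature appearing in Lemma \ref{curvature}.

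The rest is mechanical. Using this formula for $\hat C^{\dagger}$, the formula (\ref{higgs-hat-1}) for $\hat C$, the $C^{\infty}(M^{\infty})$-linearity of the endomorphisms $\hat C_{\tau_{0,\alpha}}$ and $\hat C^{\dagger}_{\overline{\tau_{0,\beta}}}$, and the fact that $C_{\tau_{0,\sigma}}$ and $C^{\dagger}_{\overline{\tau_{0,\nu}}}$ are $C^{\infty}(M)$-linear (hence commute past the multiplicative lifting ${}^{\wedge}$), one computes both composites $\hat C_{\tau_{0,\alpha}}\hat C^{\dagger}_{\overline{\tau_{0,\beta}}}$ and $\hat C^{\dagger}_{\overline{\tau_{0,\beta}}}\hat C_{\tau_{0,\alpha}}$ on $T^{m}(\tau_{0,\gamma})$ and subtracts, obtaining
\[
\bigl[\hat C_{\tau_{0,\alpha}},\hat C^{\dagger}_{\overline{\tau_{0,\beta}}}\bigr]\bigl(T^{m}(\tau_{0,\gamma})\bigr)=M^{\sigma}_{\alpha}\overline{M^{\nu}_{\beta}}\;T^{m}\Bigl(\bigl[[C_{\tau_{0,\sigma}},C^{\dagger}_{\overline{\tau_{0,\nu}}}](\tau_{0,\gamma})\bigr]^{\wedge}\Bigr).
\]
Adding this to relation (\ref{curv1}) of Lemma \ref{curvature} gives (\ref{sec-c}), proving part i). For part ii), if ${}^{D}\!R+[C,C^{\dagger}]=0$ on $M$ then the right-hand side of (\ref{sec-c}) vanishes, so the $(\tau_{0,\alpha},\overline{\tau_{0,\beta}})$ component of ${}^{\hat D}\!R+[\hat C,\hat C^{\dagger}]$ is zero; every component in which at least one of the $(1,0)$ or $(0,1)$ directions is of the form $T^{\geq1}(\cdot)$ is zero trivially, since ${}^{\hat D}\!R$ vanishes there by Lemma \ref{curvature} and $[\hat C,\hat C^{\dagger}]$ vanishes there because one of its two factors is zero.

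I expect the main obstacle to be the adjoint computation of the second paragraph: one must keep careful track of where complex conjugation falls — in particular that the bar lands on $M^{\sigma}_{\beta}$ and not on the lifted structure functions of $C^{\dagger}$ — and be at ease lifting the non-holomorphic $(1,0)$-vector fields $C^{\dagger}_{\overline{\tau_{0,\sigma}}}(\tau_{0,\gamma})$. Once $\hat C^{\dagger}$ is correctly identified, everything reduces to the same ``$M^{\sigma}_{\alpha}\overline{M^{\nu}_{\beta}}\,T^{m}([\,\cdot\,]^{\wedge})$'' bookkeeping already used in Lemmas \ref{curvature} and \ref{preliminar-1}.
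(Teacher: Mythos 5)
Your proposal is correct and follows essentially the same route as the paper: both hinge on computing $\hat C^{\dagger}_{\overline{\tau_{0,\beta}}}\bigl(T^{m}(\tau_{0,\gamma})\bigr)=\overline{M^{\sigma}_{\beta}}\,T^{m}\bigl(\bigl[C^{\dagger}_{\overline{\tau_{0,\sigma}}}(\tau_{0,\gamma})\bigr]^{\wedge}\bigr)$ from the defining adjoint identity for $\hat h$, assembling the commutator in the ``$M^{\sigma}_{\alpha}\overline{M^{\nu}_{\beta}}\,T^{m}([\,\cdot\,]^{\wedge})$'' form, and adding relation (\ref{curv1}) of Lemma \ref{curvature}. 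Your treatment of part ii), using the vanishing of both ${}^{\hat D}\!R$ and $[\hat C,\hat C^{\dagger}]$ on arguments from the image of $T$, is also exactly the paper's argument.
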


\begin{proof}
We first compute the $\hat{h}$-adjoint
$\hat{C}^{\dagger}_{\overline{{\tau}_{0,\beta}}}$ of
$\hat{C}_{\tau_{0,\beta}}$, as follows:
\begin{align*}
&\hat{h}\left(\hat{C}_{{\tau}_{0,\beta}}\left(
T^{n}(\tau_{0,\alpha})\right), T^{m} (\tau_{0,\gamma})\right)=
M_{\beta}^{\sigma} \hat{h} \left( T^{n}\left(
 \left[ C_{\tau_{0,\sigma}}(\tau_{0,\alpha})
\right]^{\wedge}\right) , T^{m}(\tau_{0,\gamma})\right)\\
&= M_{\beta}^{\sigma}\delta_{nm} \left[ h\left(
C_{\tau_{0,\sigma}}(\tau_{0,\alpha}),
\tau_{0,\gamma}\right)\right]^{\wedge}
= M_{\beta}^{\sigma}\delta_{nm}\left[ h\left( \tau_{0,\alpha}, C^{\dagger}_{\overline{{\tau}_{0,\sigma}}}(\tau_{0,\gamma})\right)\right]^{\wedge}\\
&= M_{\beta}^{\sigma}\hat{h}\left( T^{n}(\tau_{0,\alpha}),T^{m}
\left( \left[ C^{\dagger}_{\overline{\tau_{0,\sigma}}}(\tau_{0,\gamma})
\right]^{\wedge}\right) \right)\\
&=
\hat{h} \left(
T^{n}(\tau_{0,\alpha}),\overline{M_{\beta}^{\sigma}} T^{m}\left(\left[
C^{\dagger}_{\overline{{\tau}_{0,\sigma}}}(\tau_{0,\gamma})
\right]^{\wedge}\right)\right).
\end{align*}
We obtain:
$$
\hat{C}^{\dagger}_{\overline{{\tau}_{0,\beta}}} \left(
T^{m}(\tau_{0,\gamma}) \right) = \overline{M_{\beta}^{\sigma}}
T^{m}\left(\left[
C^{\dagger}_{\overline{{\tau}_{0,\sigma}}}(\tau_
{0,\gamma})\right]^{\wedge}\right) ,
$$
which, combined with
$$
\hat{C}_{\tau_{0,\alpha}} \left( T^{m} (\tau_{0,\gamma })\right) =
M^{\sigma}_{\alpha} T^{m}\left( [ C_{\tau_{0,\sigma}} (\tau_{0,\gamma
})]^{\wedge}\right)
$$
gives
$$
[ \hat{C}_{{\tau}_{0,\alpha}}
,\hat{C}^{\dagger}_{\overline{{\tau}_{0,\beta}}}] \left(
T^{m}(\tau_{0,\gamma})\right)  = M_{\alpha}^{\sigma}
\overline{M_{\beta}^{\nu}} T^{m}\left( [C_{{\tau}_{0,\sigma}},
C^{\dagger}_{\overline{{\tau}_{0,\nu}}}]
(\tau_{0,\gamma})^{\wedge}\right) .
$$
The above relation, together with
$$
{}^{\hat{D}}\! R_{\tau_{0,\alpha}, \overline{\tau_{0,\beta}}}
\left( T^{m}(\tau_{0,\gamma })\right) = M_{\alpha}^{\sigma}
\overline{M_{\beta}^{\nu}} T^{m}\left( \left[ {}^{{D}}\!
R_{\tau_{0,\sigma}, \overline{\tau_{0,\nu}} }(\tau_{0,\gamma
})\right]^{\wedge}\right)
$$
(see Lemma \ref{curvature}), implies (\ref{sec-c}). This proves
{\it i)}. Claim {\it ii)} follows  from {\it i)} and
$$
{}^{\hat{D}}\! R_{\mathcal W_{1}, \mathcal W_{2}} =0,\quad [
\hat{C}_{\mathcal W_{1}}, \hat{C}^{\dagger}_{\bar{\mathcal
W}_{2}}]=0
$$
which hold when $\mathcal W_{1}$ or $\mathcal W_{2}$ belongs to
the image of $T$.

\end{proof}

\subsection{Lifted harmonic real Saito bundles}\label{extended-sect}

\begin{thm}\label{saito-thm}
Assume that $(T^{1,0}M, \nabla , \eta , R_{0}, R_{\infty}, k)$ is
a real Saito bundle (respectively, a harmonic real Saito bundle).
Then $(T^{1,0}M^{\infty},\hat{\nabla}, \hat{\eta}, \hat{C},
\hat{R}_{0}, \hat{R}_{\infty},\hat{k})$ is also a real Saito
bundle  (respectively, a harmonic real Saito bundle).

\end{thm}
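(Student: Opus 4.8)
The strategy is to reduce Theorem \ref{saito-thm} entirely to facts already established: Theorem \ref{thm-saito} (the lifted data is a Saito bundle), Theorem \ref{thm-harmonic} (the lifted Higgs bundle is harmonic), Lemmas \ref{chern}--\ref{curvature} (the Chern connection $\hat D$ and its curvature are natural lifts), and the behaviour of natural lifts under $g$- and $h$-adjoints. By Definition \ref{diverse}(1), to get a real Saito bundle we must check that $\hat k$ is a real structure compatible with $\hat\eta$ and that $\hat h := \hat\eta(\cdot,\hat k\cdot)$ satisfies $\hat D\hat\eta = 0$; by Definition \ref{diverse}(2), for the harmonic case we additionally need the $tt^{*}$-equations for $(\hat C,\hat h)$, which is exactly Theorem \ref{thm-harmonic}. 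So the only genuinely new content is the compatibility $\hat D\hat\eta = 0$.

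First I would recall (as already noted in Section \ref{hermitian-str}) that $\hat k$ is an anti-linear fibrewise involution because $k$ is, using that conjugation commutes with natural lifts, and that $\hat h = \hat\eta(\cdot,\hat k\cdot)$ is pseudo-Hermitian because $h = \eta(\cdot,k\cdot)$ is and because $\hat\eta$, $\hat k$, $\hat h$ are all natural lifts evaluated on the frame $\{T^{n}(\tau_{0,\alpha})\}$; this is routine and can be dispatched in a sentence. Then the heart of the proof: show $(\hat D\hat\eta)(\mathcal W_1,\mathcal W_2,\mathcal W_3)=0$. Writing $(\hat D_{\mathcal W}\hat\eta)(\mathcal W_2,\mathcal W_3) = \mathcal W\,\hat\eta(\mathcal W_2,\mathcal W_3) - \hat\eta(\hat D_{\mathcal W}\mathcal W_2,\mathcal W_3) - \hat\eta(\mathcal W_2,\hat D_{\mathcal W}\mathcal W_3)$, I would evaluate on the frame. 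When $\mathcal W = T(\mathcal V)\in\mathrm{Im}(T)$, the first term vanishes because $\hat\eta(\mathcal W_2,\mathcal W_3)$ is a natural lift of a (constant) function, hence killed by $\mathrm{Im}(T)$ by Lemma \ref{ian}, and the other two terms vanish by \eqref{chern-2}. When $\mathcal W = \tau_{0,\alpha}$ and $\mathcal W_i = T^{n_i}(\tau_{0,\beta_i})$, I would use \eqref{chern-1} to write $\hat D_{\tau_{0,\alpha}}T^{n}(\tau_{0,\beta}) = M^{\sigma}_{\alpha}T^{n}([D_{\tau_{0,\sigma}}\tau_{0,\beta}]^{\wedge})$ and \eqref{deriv-new} to write $\tau_{0,\alpha}(\widehat{h_{\beta\gamma}}) = M^{\sigma}_{\alpha}[h(D_{\tau_{0,\sigma}}\tau_{0,\beta},\tau_{0,\gamma})]^{\wedge}$ as in \eqref{delta-m}; the whole expression then becomes $M^{\sigma}_{\alpha}\,\delta_{n_2 n_3}\,[(D_{\tau_{0,\sigma}}\eta)(\tau_{0,\beta_2},\tau_{0,\beta_3})]^{\wedge}$ (one checks the three terms assemble into a natural lift of $D\eta$ contracted with $M$), which is zero precisely because $D\eta = 0$ on $M$ — this being the real-Saito-bundle hypothesis. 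The factor $\delta_{n_2 n_3}$ also forces the mixed-degree cases to vanish automatically.

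Finally, for the harmonic real Saito case I would simply invoke Theorem \ref{thm-harmonic}, which gives that $(T^{1,0}M^{\infty},\hat C,\hat h)$ is a harmonic Higgs bundle, so all the extra $tt^{*}$-conditions of Definition \ref{diverse}(2) hold; combined with the real Saito bundle structure just established and with Theorem \ref{thm-saito}, this completes the proof. The main obstacle — though not a severe one — is bookkeeping in the compatibility computation: correctly recognising that the three terms in $(\hat D_{\tau_{0,\alpha}}\hat\eta)(\mathcal W_2,\mathcal W_3)$ reorganise into $M^{\sigma}_{\alpha}$ times the natural lift of $(D_{\tau_{0,\sigma}}\eta)$, which requires using that $\hat\eta$ is the natural lift of a constant metric so that its frame-derivative is controlled solely by the $\hat D$-Christoffel symbols coming from \eqref{chern-1}. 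Once that identification is made, vanishing is immediate from $D\eta = 0$.
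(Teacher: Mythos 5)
Your proposal is correct and follows essentially the same route as the paper: the theorem is reduced to Theorems \ref{thm-saito} and \ref{thm-harmonic} plus the single new fact $\hat{D}\hat{\eta}=0$, which the paper proves as Lemma \ref{compat-lem} by exactly your case split ($\mathcal W\in\mathrm{Im}(T)$ versus $\mathcal W=\tau_{0,\gamma}$) and the same assembly into $M^{\sigma}_{\gamma}\delta_{nm}\left[D_{\tau_{0,\sigma}}(\eta)(\tau_{0,\alpha},\tau_{0,\beta})\right]^{\wedge}$. The only cosmetic slip is your appeal to \eqref{delta-m} (which concerns $\hat{h}$) for the derivative term: since $\hat{\eta}(T^{n}(\tau_{0,\alpha}),T^{m}(\tau_{0,\beta}))=\delta_{nm}\eta_{\alpha\beta}$ is constant, that term simply vanishes, as the paper notes.
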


The above theorem is a consequence of Theorems \ref{thm-saito} and
\ref{thm-harmonic} and the following lemma.

\begin{lem}\label{compat-lem} If $h$ is compatible with $\eta$, then $\hat{h}$ is compatible
with $\hat{\eta}.$\end{lem}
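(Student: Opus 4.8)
The plan is to show that $\hat h$ is compatible with $\hat\eta$ in the sense of Definition \ref{diverse}(1), i.e.\ that $\hat D(\hat\eta)=0$, where $\hat D$ is the Chern connection of $\hat h$. Since $\hat\eta$ is a holomorphic metric and $\hat D$ is a Chern connection, its $(0,1)$-part is $\bar\partial$, which automatically annihilates $\hat\eta$; so it suffices to check $(\hat D_{\mathcal W}\hat\eta)(\mathcal W_1,\mathcal W_2)=0$ for $\mathcal W$ of type $(1,0)$. The natural strategy is to evaluate everything on the frame $\{T^n(\tau_{0,\alpha})\}$, splitting the test vector field $\mathcal W$ into the cases $\mathcal W=\tau_{0,\alpha}$ and $\mathcal W=T(\mathcal W')\in\operatorname{Im}(T)$, since Lemma \ref{chern} tells us $\hat D$ behaves very differently in these two regimes.

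First I would treat the case $\mathcal W\in\operatorname{Im}(T)$. Here $\hat\eta(T^n(\tau_{0,\alpha}),T^m(\tau_{0,\beta}))=\delta_{mn}\eta_{\alpha\beta}$ is a \emph{constant} (it is the natural lift of the constant $\eta_{\alpha\beta}$), so $T(\mathcal W')$ annihilates it by Lemma \ref{ian}; and by \eqref{chern-2} of Lemma \ref{chern}, $\hat D_{T(\mathcal W')}$ kills each frame field $T^n(\tau_{0,\alpha})$. Hence all three terms in $(\hat D_{T(\mathcal W')}\hat\eta)(T^n(\tau_{0,\alpha}),T^m(\tau_{0,\beta}))= T(\mathcal W')(\hat\eta(\cdots))-\hat\eta(\hat D_{T(\mathcal W')}(\cdots),\cdots)-\hat\eta(\cdots,\hat D_{T(\mathcal W')}(\cdots))$ vanish, and this case is done.

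Next, the case $\mathcal W=\tau_{0,\alpha}$. Using \eqref{chern-1}, $\hat D_{\tau_{0,\alpha}}(T^n(\tau_{0,\beta}))=M^\sigma_\alpha T^n([D_{\tau_{0,\sigma}}(\tau_{0,\beta})]^\wedge)$, and $\hat\eta(T^n(\hat X),T^m(\hat Y))=\delta_{mn}[\eta(X,Y)]^\wedge$ from \eqref{hat-eta} together with \eqref{more-general}. Also $\tau_{0,\alpha}(\widehat{\eta_{\beta\gamma}})=[\partial_\sigma \eta_{\beta\gamma}]^\wedge M^\sigma_\alpha$ by \eqref{deriv-new} (in fact $\eta_{\beta\gamma}$ is constant, so this is zero, but even without using that, the computation goes through). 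Expanding $(\hat D_{\tau_{0,\alpha}}\hat\eta)(T^n(\tau_{0,\beta}),T^m(\tau_{0,\gamma}))$ and factoring out $\delta_{mn}M^\sigma_\alpha$, one is left with $\delta_{mn}M^\sigma_\alpha\big([\,(\partial_{\tau_{0,\sigma}}\eta)(\tau_{0,\beta},\tau_{0,\gamma})-\eta(D_{\tau_{0,\sigma}}\tau_{0,\beta},\tau_{0,\gamma})-\eta(\tau_{0,\beta},D_{\tau_{0,\sigma}}\tau_{0,\gamma})\,]^\wedge\big)=\delta_{mn}M^\sigma_\alpha[(D_{\tau_{0,\sigma}}\eta)(\tau_{0,\beta},\tau_{0,\gamma})]^\wedge$, which is zero precisely because $D\eta=0$ on $M$ (the hypothesis that $h$ is compatible with $\eta$). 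Since $\{T^n(\tau_{0,\alpha})\}$ is a frame, this establishes $\hat D\hat\eta=0$.

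The only mildly delicate point — and the one I would state carefully — is the bookkeeping that lets one factor the naturally-lifted tensors through the operation $[\cdot]^\wedge$: one needs $\hat D_{\tau_{0,\alpha}}$ applied to $T^n$ of a lifted vector field to commute past $T^n$ and produce $M^\sigma_\alpha$ times $T^n$ of the lift of $D_{\tau_{0,\sigma}}(\cdot)$, which is exactly the content of \eqref{chern-1} once one writes $D_{\tau_{0,\sigma}}(\tau_{0,\beta})=f^\mu_{\sigma\beta}\tau_{0,\mu}$ and uses \eqref{deriv-new} to differentiate the coefficient $\widehat{f^\mu_{\sigma\beta}}$. This is the same mechanism used throughout Sections \ref{extended-str}, so no genuinely new obstacle arises; the proof is essentially a short verification once the frame and the two cases are set up.
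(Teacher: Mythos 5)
Your proposal is correct and follows essentially the same route as the paper: the same split of the differentiating direction into $\tau_{0,\alpha}$ versus $\operatorname{Im}(T)$, the same use of the constancy of $\hat{\eta}$ on the frame $\{T^{n}(\tau_{0,\alpha})\}$ together with Lemmas \ref{ian} and \ref{chern}, and the same factoring out of $\delta_{mn}M^{\sigma}_{\alpha}$ to reduce everything to $[D_{\tau_{0,\sigma}}(\eta)]^{\wedge}=0$. The only cosmetic difference is that you explicitly note the $(0,1)$-part is handled by $\bar{\partial}$-holomorphy of $\hat{\eta}$, which the paper leaves implicit.
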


\begin{proof}
Recall that $h$ is compatible with $\eta$ if the Chern connection
$D$ of $h$ preserves $\eta $, i.e. $D(\eta )=0$ and the
compatibility of $\hat{h}$ and $\hat{\eta}$ is defined in a
similar way. In order to prove our claim, we will show that

\begin{equation}\label{impreuna-1}
\hat{D}_{\mathcal W} (\hat{\eta}) = 0, \quad\forall {\mathcal
W}\in \mathrm{Im}(T)
\end{equation}
and
\begin{equation}\label{impreuna-2}
\hat{D}_{\tau_{0,\gamma}} (\hat{\eta}) \left( T^{n}
(\tau_{0,\alpha}), T^{m}(\tau_{0,\beta})\right)=
M^{\sigma}_{\gamma}\delta_{nm} \left[ D_{\tau_{0,\sigma}}(\eta )
(\tau_{0,\alpha}, \tau_{0,\beta})\right]^{\wedge}.
\end{equation}
To prove these two relations, we notice that for any
vector field ${\mathcal V}\in {\mathcal T}^{1,0}_{M^{\infty}}$ and
$n,m\geq 0$, $1\leq \alpha ,\beta \leq N$,
\begin{align}
\nonumber\hat{D}_{\mathcal V}(\hat{\eta})\left( T^{n}
(\tau_{0,\alpha}), T^{m}(\tau_{0,\beta})\right)& =
-\hat{\eta}\left( \hat{D}_{\mathcal V} \left( T^{n}(\tau_{0,\alpha})\right), T^{m}(\tau_{0,\beta})\right)\\
\label{h-d}& - \hat{\eta} \left( T^{n} ({\tau_{0,\alpha}}),
\hat{D}_{\mathcal V} \left( T^{m}(\tau_{0,\beta})\right)\right) ,
\end{align}
because
$\hat{\eta}\left( T^{n} (\tau_{0,\alpha}),
T^{m}(\tau_{0,\beta})\right)$ is constant.
This relation, together with the expression of $\hat{D}$
from Lemma \ref{chern}, implies
(\ref{impreuna-1}).  Letting ${\mathcal V}:=
\tau_{0,\gamma}$ in (\ref{h-d}) we obtain:
\begin{align*}
&\hat{D}_{\tau_{0,\gamma}} (\hat{\eta})
\left( T^{n} (\tau_{0,\alpha}), T^{m}(\tau_{0,\beta})\right) =\\
&= -M^{\sigma}_{\gamma}\hat{\eta}\left( T^{n}\left( \left[
{D}_{\tau_{0,\sigma}}(\tau_{0,\alpha})\right]^{\wedge}\right) ,
T^{m}(\tau_{0,\beta})\right)
- M^{\sigma}_{\gamma}\hat{\eta} \left( T^{n} (\tau_{0,\alpha}),
T^{m}\left( \left[ {D}_{\tau_{0,\sigma}}(\tau_{0,\beta})\right]^{\wedge}\right)
 \right)\\
&= -M^{\sigma}_{\gamma} \delta_{mn}\left[  \eta \left(
D_{\tau_{0,\sigma}}(\tau_{0,\alpha}),\tau_{0,\beta}\right)\right]^{\wedge}
- M^{\sigma}_{\gamma}\delta_{mn} \left[ \eta\left(
\tau_{0,\alpha}, D_{\tau_{0,\sigma}}
(\tau_{0,\beta})\right)\right]^{\wedge}\\
&= M^{\sigma}_{\gamma}\delta_{nm} \left[ D_{\tau_{0,\sigma}}(\eta )
(\tau_{0,\alpha}, \tau_{0,\beta})\right]^{\wedge},\\
\end{align*}
i.e. relation (\ref{impreuna-2}) holds as well.
\end{proof}

\begin{rem}\label{dchk}{\rm The above lemma, combined with
Theorem \ref{thm-harmonic}, imply that if $(T^{1,0}M,D,C,h,k)$ is a $DChk$-bundle,
then $(T^{1,0}M^{\infty},\hat{D},\hat{C},\hat{h},\hat{k})$ is a $\hat{D}\hat{C}\hat{h}\hat{k}$-bundle
 (the definition of $DChk$-bundles was recalled in Remark \ref{CV}).}
\end{rem}

\subsection{Lifted harmonic potential real Saito bundles}\label{extended-real-sect}

\begin{thm}\label{potential-thm}
Suppose that $(M, \bullet , e, \eta , E)$ is a harmonic Frobenius
manifold, with real structure $k$ and potential $A$. Then
$T^{1,0}M^{\infty}$, with the data $(\hat{\nabla}, \hat{\eta},
\hat{C}, \hat{R}_{0}, \hat{R}_{\infty}, \hat{k}, \hat{A})$, is  a
harmonic potential real Saito bundle.
\end{thm}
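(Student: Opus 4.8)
The plan is to verify the three potential conditions 3a), 3b), 3c) from Definition \ref{diverse} for the lifted data, using the hypothesis that the small-phase-space data $(T^{1,0}M,\nabla,\eta,C,R_{0},R_{\infty},k,A)$ is a harmonic potential real Saito bundle together with the lifting formulas already established. By Theorems \ref{thm-saito} and \ref{saito-thm}, $(T^{1,0}M^{\infty},\hat{\nabla},\hat{\eta},\hat{C},\hat{R}_{0},\hat{R}_{\infty},\hat{k})$ is already known to be a harmonic real Saito bundle, so only the potential axioms remain, with candidate potential $\hat{A}$ (the natural lift of $A$ in the sense of Section \ref{general}). The overall strategy mirrors the structure of the proofs of Lemmas \ref{lem-1}, \ref{lem-2}, \ref{preliminar-1}: split every identity into the case where at least one argument lies in $\mathrm{Im}(T)$ — where both sides vanish essentially by construction (Lemmas \ref{ian}, \ref{chern}, \ref{deriv-M} and the definition of $\hat{C}$, $\hat{\nabla}$) — and the case where the relevant directional vector field is a primary generator $\tau_{0,\gamma}$, in which everything reduces to the corresponding identity on $M$ multiplied through by the factor $M^{\sigma}_{\gamma}$.

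First I would record the key computational facts: $\hat{A}(T^{n}(\tau_{0,\alpha})) = T^{n}([A(\tau_{0,\alpha})]^{\wedge})$ and $\hat{A}^{\dagger}_{\overline{\tau_{0,\beta}}}$ computes, just as in Lemma \ref{preliminar-2}, to $\overline{M^{\sigma}_{\beta}}$ times the lift of $A^{\dagger}_{\overline{\tau_{0,\sigma}}}$; that $\hat{A}$ is $\hat{\eta}$-self-adjoint because $A$ is $\eta$-self-adjoint and natural lifts preserve symmetry (as in the Higgs-invariance lemma); and that $\hat{D}^{(1,0)}_{\tau_{0,\gamma}}(T^{n}(\hat{X})) = M^{\sigma}_{\gamma}\,T^{n}([D_{\tau_{0,\sigma}}X]^{\wedge})$ from Lemma \ref{chern}, while $\hat{D}^{(1,0)}_{T({\mathcal W})}$ annihilates $T^{n}(\hat{X})$ by \eqref{d-d}. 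For 3a), $D^{(1,0)}\hat{A} = \hat{C}$: in the $\tau_{0,\gamma}$-direction the left side is $M^{\sigma}_{\gamma}T^{n}([D_{\tau_{0,\sigma}}A(\tau_{0,\beta})]^{\wedge}) - M^{\sigma}_{\gamma}T^{n}([A\,D_{\tau_{0,\sigma}}\tau_{0,\beta}]^{\wedge})$ — careful here, one must differentiate the function-coefficients of $A(\tau_{0,\beta})$ via Lemma \ref{ian}, producing exactly $M^{\sigma}_{\gamma}T^{n}([(D^{(1,0)}_{\tau_{0,\sigma}}A)(\tau_{0,\beta})]^{\wedge}) = M^{\sigma}_{\gamma}T^{n}([C_{\tau_{0,\sigma}}\tau_{0,\beta}]^{\wedge}) = \hat{C}_{\tau_{0,\gamma}}(T^{n}(\tau_{0,\beta}))$; in the $\mathrm{Im}(T)$-direction both sides vanish. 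For 3b), $\hat{D}^{(1,0)} = \hat{\nabla} - [\hat{A}^{\dagger},\hat{C}]$: evaluate on $\tau_{0,\gamma}$ acting on $T^{n}(\tau_{0,\beta})$; every term carries a factor $M^{\sigma}_{\gamma}$ (for $[\hat{A}^{\dagger}_{\overline{\cdot}},\hat{C}_{\tau_{0,\gamma}}]$ one needs $[\hat A^\dagger,\hat C]^{\wedge}=[\widehat{A^\dagger},\widehat C]$ componentwise, analogous to the $[A,B]^{\wedge}=[\hat A,\hat B]$ remark), and what survives is $M^{\sigma}_{\gamma}T^{n}$ applied to the lift of $(D^{(1,0)}_{\tau_{0,\sigma}} - \nabla_{\tau_{0,\sigma}} + [A^{\dagger},C_{\tau_{0,\sigma}}])(\tau_{0,\beta}) = 0$; in an $\mathrm{Im}(T)$-direction $\hat{D}^{(1,0)}$, $\hat{\nabla}$ both vanish on $T^{n}(\hat X)$ and $\hat{C}$ vanishes there too. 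For 3c), the endomorphism $\hat{R}_{\infty} + [\hat{A}^{\dagger},\hat{R}_{0}]$ is the natural lift of $R_{\infty} + [A^{\dagger},R_{0}]$ (again using componentwise behaviour of the bracket under lifting), so its $\hat{h}$-self-adjointness follows from $\hat{h}$-adjoints of lifts being lifts of $h$-adjoints (the computation done in Lemma \ref{preliminar-2}) and the $h$-self-adjointness of $R_{\infty}+[A^{\dagger},R_{0}]$ on $M$.

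The main obstacle I anticipate is bookkeeping in condition 3b): $D^{(1,0)}$ and $\nabla$ differ as connections on $M$ only by $-[A^{\dagger},C]$, but on $M^{\infty}$ the lifted connections $\hat{D}^{(1,0)}$ and $\hat{\nabla}$ each acquire, in a $\tau_{0,\gamma}$-direction, an extra Getzler factor $M^{\sigma}_{\gamma}$ and a derivative-of-coefficients term handled by Lemma \ref{ian}; one must check that these auxiliary terms on the two sides match exactly, i.e. that the lifting is compatible with the difference tensor. This is precisely the kind of verification carried out in Lemmas \ref{chern}, \ref{lem-1}, \ref{lem-2}, and the same manipulations — differentiate function-coefficients via \eqref{x-f}, collect $M^{\sigma}_{\gamma}$, invoke $\hat{\nabla}$-flatness of $T^{n}(\tau_{0,\alpha})$ and \eqref{nabla-t} — resolve it. I would remark that since $A$ need not be holomorphic, $\hat{A}$ is a genuine $C^{\infty}(M,\mathbb{R})$-linear lift, but Section \ref{general} already covers this case, so no new difficulty arises there. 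Once 3a)–3c) are verified, the theorem follows by combining them with Theorems \ref{thm-saito} and \ref{saito-thm}.
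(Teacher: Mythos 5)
Your proposal is correct and follows essentially the same route as the paper: reduce to the three potential axioms via Theorems \ref{thm-saito} and \ref{saito-thm}, use the commutation of natural lifts with $h$- and $\eta$-adjoints for the self-adjointness statements, and verify 3a) and 3b) by splitting into $\mathrm{Im}(T)$-directions (where all terms vanish) and primary directions (where each term acquires the factor $M^{\sigma}_{\gamma}$ and the identity collapses to the corresponding small-phase-space relation evaluated on the $\nabla$-flat frame $\tau_{0,\beta}$). The formulas you isolate are exactly the paper's (\ref{pot1}), (\ref{fin-art}), (\ref{pot2}) and (\ref{fin-art-1}).
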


\begin{proof}
It is easy to check that taking Hermitian adjoints of endomorphisms, with respect to $h$ and $\hat{h}$, commutes
with taking natural lifts, so that $(\hat{A})^{\dagger} = \widehat{A^{\dagger}}$
(we use the same symbol ${}^\dagger$ to denote $h$ and $\hat{h}$-adjoints). There is a similar commutativity property
when the metrics $\eta$ and $\hat{\eta}$ are used instead of $h$ and $\hat{h}.$
Thus, since $A$ is $\eta$-self adjoint, $\hat{A}$ is
$\hat{\eta}$-self adjoint. Similarly,
$$
\hat{R}_{\infty} + [(\hat{A})^{\dagger}, \hat{R}_{0}]= \left(
{R}_{\infty} + [A^{\dagger}, R_{0}]\right)^{\wedge}
$$
is $\hat{h}$-self adjoint, because  $R_{\infty} +[A^{\dagger},
R_{0}]$ is $h$-self adjoint.  From Theorem \ref{saito-thm}, we
only have to check that
\begin{equation}\label{doua}
\hat{D}^{(1,0)}= \hat{\nabla}
- [\widehat{{A}^{\dagger}},\hat{C}],\quad \hat{D}^{(1,0)}\hat{A}=\hat{C}.
\end{equation}
Using that $T^{m}(\tau_{0,\beta})$
are $\hat{\nabla}$-parallel, the first equality (\ref{doua}) is
equivalent to
\begin{equation}\label{t-n-sec}
\hat{D}_{T^{n}(\tau_{0,\alpha} )}
\left(T^{m}(\tau_{0,\beta})\right) +
[\widehat{A^{\dagger}},\hat{C}_{T^{n}(\tau_{0,\alpha} )} ]\left(
T^{m}(\tau_{0,\beta})\right) =0.
\end{equation}
When $n>0$ both terms of (\ref{t-n-sec})  are zero. We now show
that (\ref{t-n-sec}) holds also for $n=0.$ For this, we use
\begin{equation}\label{pot1}
\hat{D}_{\tau_{0,\alpha}}
\left(T^{m}(\tau_{0,\beta})\right) = M_{\alpha}^{\sigma}
T^{m}\left(\left[ D_{\tau_{0,\sigma}}(\tau_{0,\beta})\right]^{\wedge}\right)
\end{equation}
and
\begin{equation}\label{fin-art}
[\widehat{A^{\dagger}},\hat{C}_{\tau_{0,\alpha}}]\left(
T^{m}(\tau_{0,\beta})\right)  =
M_{\alpha}^{\sigma}T^{m} \left( [A^{\dagger},
C_{\tau_{0,\sigma}}](\tau_{0,\beta})^{\wedge}\right)
\end{equation}
(easy check). On the other hand, $\nabla = D + [A^{\dagger}, C]$
($M$ with the given data is a harmonic Frobenius manifold, with
potential $A$) and, since $\nabla (\tau_{0,\beta})=0$,
\begin{equation}\label{pot2}
D_{\tau_{0,\sigma}}(\tau_{0,\beta}) + [A^{\dagger}, C_{\tau_{0,\sigma}}]
(\tau_{0,\beta})=0.
\end{equation}
Combining (\ref{pot1}), (\ref{fin-art}) and (\ref{pot2}) we
obtain
$$
\hat{D}_{\tau_{0,\alpha}}\left( T^{m}(\tau_{0,\beta})\right)
+ [\widehat{A^{\dagger}},\hat{C}_{\tau_{0,\alpha}}]
\left( T^{m}(\tau_{0,\beta})\right)  =0.
$$
The first equality (\ref{doua}) is proved. The second equality
(\ref{doua}) can be proved equally easy, by using
\begin{align}
\nonumber&\hat{D}_{\tau_{0,\alpha}}(\hat{A})\left( T^{m}(\tau_{0,\beta})\right)
= M^{\sigma}_{\alpha} T^{m}\left( \left[ D_{\tau_{0,\sigma}}(A)
(\tau_{0,\beta})\right]^{\wedge}\right)\\
\label{fin-art-1} &\hat{D}_{T^{n}(\tau_{0,\alpha})}(\hat{A})
\left( T^{m}(\tau_{0,\beta})\right)
= 0,
\end{align}
the definition of $\hat{C}$ and the condition $D^{1,0}A= C.$

\end{proof}

\begin{rem} {\rm Relations (\ref{fin-art}) and (\ref{fin-art-1})
remain true when ${A}^{\dagger}$, respectively ${A}$, are
replaced by  any endomorphism of $T^{1,0}M.$
Combining these facts with Remark \ref{dchk},
it is easy to see that if $(T^{1,0}M,{C}, {h}, k,
{\mathcal U},{\mathcal Q})$ is a CV-bundle (see Remark \ref{CV}),
then
$ (T^{1,0}M^{\infty},\hat{C}, \hat{h}, \hat{k}, \hat{\mathcal U},\hat{\mathcal Q})$
is also a CV-bundle.}
\end{rem}

\section{Discussion}\label{discussion-str}

The construction of these Hermitian structures on the big phase space rests on the pre-existence of
two different structures:
\begin{itemize}
\item[$\bullet$] the lifting map $u^\alpha = \eta^{\alpha\beta} << \tau_{0,1} \tau_{0,\beta} >>_{{}_0}\,;$
\item[$\bullet$] the Hermitian structures on the small phase space,
\end{itemize}
and one should comment separably on the tractability of each of these two points.

The first point rests on the work of Dijkgraaf and Witten \cite{DijkgraafWitten}. We repeat verbatim their construction, using their
normalization of the topological recursion relation which differs by a factor to the one used in \cite{liu1,liu2} and in the above. The
string equation gives (where $u_{\alpha}=\eta_{\alpha\beta} u^\beta$ and $t_{i,\alpha}=\eta_{\alpha\beta}t^\beta_i$)
\[
u_\alpha = t_{0,\alpha} + \sum_{i=0}^\infty (i+1) t_{i+1,\beta} << \tau_i(\gamma_\beta) \gamma_\alpha >>_{{}_0}
\]
and small phase space calculations give the constitutive relations
\[
<< \tau_i(\gamma_\beta) \gamma_\alpha >>_{{}_0} = R_{\alpha,\beta,i}(u_1\,,\ldots\,,u_N)\,.
\]
Combining these gives
\[
u_\alpha = t_{0,\alpha} + \sum_{i=0}^\infty (i+1) t_{i+1,\beta}R_{\alpha,\beta,i}(u_1\,,\ldots\,,u_N)\,.
\]
Inverting these equations gives
$u_\alpha=u_\alpha( t^n_\beta )\,,$ and hence the two-point
correlation functions $<< \tau_{0,1} \tau_{0,\beta} >>_{{}_0}$ as
functions of the big phase space variables, as required in order
to perform the canonical lift described in Section
\ref{higgs-str}\,. If $\mathrm{dim}(M)=1$ this reduces to the
single equation
\begin{equation}
u=t_0 + \sum_{i=1}^\infty t_i u^i\,.
\label{2Dgravity}
\end{equation}
Thus the extension to the big phase space is entirely tractable, up to the inversion of these equations.

Hermitian structures in one dimension are trivial. One just has
$h(\partial_0,\partial_0)=|a(t)|$ for some non-vanishing function
$a(t)\,,$ and the corresponding anti-holomorphic involution is
then given by $k(\partial_0) = a^{-1}|a|\,\partial_0\,.$ In fact
this gives a positive CDV-structure \cite{takahashi}\,. Thus
combining this with the solution to (\ref{2Dgravity}) gives
Hermitian structures on the big phase space to 2 dimensional
gravity \cite{witten1}\,.

The existence of $tt^{*}$-geometries in higher dimensions is harder. The Hermitian structures involve the construction of solutions to Toda and harmonic map type equations \cite{cecotti,guest},
often with very specific boundary conditions. Even in dimension two this involves specific solutions to the Painlev\'e III equation. While these equations are
integrable, one is rapidly drawn into very sophisticated isomonodromy problems. For details of these constructions see, for examples originating in quantum cohomology, \cite{iritani}, and
for examples originating in singularity theory, \cite{hert1,sabbah-art}\,.

Ultimately these constructions rely on the integrability properties of the $tt^*$-equations \cite{cecotti,dubrovin3}. The $tt^*$-equations are the compatibility conditions, or zero-curvature
equations, for the deformed connections
\begin{eqnarray*}
{}^{(\lambda)}\!D_X\,Y & = & (D_X- \lambda C_X)Y\,,\\
{}^{(\lambda)}\!D_{\bar X}\,Y & = & (D_{\bar{X}} - \lambda^{-1} C_X^\dagger)Y\,.
\end{eqnarray*}
It is easy to show that the $\lambda^{\pm 2}$-terms in the curvature of
this connection vanish from the properties of the Higgs field,
the $\lambda^{\pm 1}$-terms vanish
from the first $tt^*$-equation $\partial^DC=0\,,$ and the
$\lambda^{0}$-term vanishes from the second $tt^*$-equation ${}^D\!R+[C,C^\dagger]=0$. They thus
provide a Lax pair for, and hence the integrability of, the $tt^*$-equations.

The results in this paper show that there exists a solution of the analogous Lax-pair on the big phase space $M^\infty.$ However, while one
may speculate that they define an integrable system on $M^\infty$ all that has been shown is that any solution on $M$ may be naturally lifted to a solution
on $M^\infty\,,$ not that all solutions arise in this way. The integrability aspects of the $tt^*$-equations on the big phase space deserves a separate study.

One geometric structure that plays a prominent role in quantum cohomology is the Euler field, which on the big phase space takes the form
\[
\chi=-\sum_{m,\alpha} (m+b_\alpha-b_1-1) {\tilde t}_m^\alpha
\tau_m(\gamma_\alpha) - \sum_{m,\alpha,\beta}
\mathcal{C}^\beta_\alpha {\tilde t}^\alpha_m
\tau_{m-1}(\gamma_\beta)\,,
\]
with the associated quasihomogeneity equation
\[
<<\chi>>_g=2(b_1+1)(1-g) {\mathcal{F}_g} + \frac{1}{2} \delta_{g,0} \sum_{\alpha,\beta} \mathcal{C}_{\alpha\beta} t^\alpha_0 t^\beta_0 - \frac{1}{24} \int_V c_1(V) \cup c_{d-1}(V)\,
\]
(for a precise definition of the various constants, see
\cite{liu1,liu2}). To develop further these ideas one should study
the homogeneity properties of the lifted objects on the big phase
space, starting with their homogeneity properties on the small
phase space. Another direction of research would be to reformulate
these constructions in the semi-simple case in terms of
idempotents and canonical coordinates, following \cite{liu4}.
However, a more geometric problem is to seek a description of these
Hermitian structures in terms of Givental's Lagrangian cones
\cite{givental}. Recall that Givental showed that the function
${\mathcal{F}}_0$ satisfies the Topological Recursion Relations, the String
Equation and the Dilaton Equation if and only if a corresponding
Lagrangian submanifold has certain natural geometric properties.
This gives a beautiful interpretation of quantum cohomology and
leads naturally to the quantization of these objects.
Understanding how the (compatible) Hermitian structures introduced
in this paper can be interpreted within this framework would be of
great interest. In a sense the results of this paper could be seen as a \lq pre-Givental\rq~approach. Understanding the symmetries and quantization of
the $tt^*$-equations, following Givental, would provide an elegant
solution to the problems addressed in this paper. We hope to address such problems in subsequent
work.

\end{document}